\documentclass[11pt]{article}
\usepackage{syeon}

\title{Hydrodynamic and Dynamical $\Phi^4$ Limits of Ising–Kac Models with Diffusion-Based Interactions on the Sierpi\'nski gakset}

\author{Hongyi Chen\thanks{Department of Mathematics, Aarhus University, Aarhus, Denmark. Email: \texttt{hchen77@math.au.dk}} \and Seoyeon Yang \thanks{Department of Mathematics, Princeton University, Princeton, U.S.A. Email: \texttt{syeon.y@princeton.edu}}}

\date{\today}

\begin{document}

\maketitle

\begin{abstract}
\noindent 
We introduce a general form of Ising--Kac interactions through
heat-kernel averaging, with the heat-time parameter replacing the role of 
the interaction range.
From the viewpoint of Dirichlet forms theory,
this construction provides an intrinsic analogue of the mesoscopic averaging in classical Kac models when translation invariance is unavailable.
We then investigate limits of this model on graph approximations of the Sierpi\'nski gasket.
For the associated Glauber dynamics, we establish a hierarchy of scaling limits for the magnetization field. 
First, at fixed interaction heat time and inverse temperature,
we derive a nonlocal hydrodynamic equation on the continuum gasket.
Second, as the heat time vanishes and the inverse temperature
approaches its mean-field critical value, suitably rescaled
hydrodynamic solutions converge to the solution of the deterministic
Allen--Cahn equation.
Finally, under a joint scaling of graph refinement, interaction
heat time, and inverse temperature near criticality,
we prove convergence of critical magnetization fluctuations to the solution of the dynamical $\Phi^4$ equation.
These results provide a concrete realization of the Ising-$\Phi^4$ correspondence in a geometric setting with non-integer spectral dimension.


\end{abstract}

\tableofcontents


\section{Introduction}\label{sec: Intro}

{One realization of the} universality {principle is that} microscopic spin systems can exhibit the same {rescaled} continuum critical dynamics.
For Ising-type systems whose dynamics do not conserve magnetization, the dynamical $\Phi^4$ equation provides {one such} basic continuum model~\cite{HohenbergHalperin1977}.
Rigorous microscopic realizations are given by suitably rescaled Ising--Kac Glauber dynamics on Euclidean lattices~\cite{IsingKac1D,isingkacLine,IsingKac-Phi4-2D,3Dcase}.
In these models, the Kac interaction averages spins over mesoscopic neighborhoods through a translation-invariant convolution kernel. 
{In the absence of translation structure on the gasket, we define Kac averaging from a prescribed diffusion, and establish the corresponding scaling limits.}

{One motivation for pursuing such non-integer dimension realizations of Ising-$\Phi^4$ universality} comes from Wilson's renormalization-group approach to critical phenomena and the role of noninteger dimension in the Wilson--Fisher expansion~\cite{WilsonFisher1972,WilsonKogut1974}.
Fractals provide concrete geometries for investigating critical behavior, but are not simply realizations of continued Euclidean theory.
Analytical and numerical studies of Ising models on fractal lattices indicate that equilibrium critical behavior can depend on geometric and connectivity properties not captured by fractal dimension alone~\cite{GefenMandelbrotAharony1980,MonceauHsiao2004}.
These observations suggest using a chosen diffusion as the starting point for defining microscopic interactions and studying their continuum limits, rather than relying on dimension alone.

We propose to use the heat semigroup of a prescribed diffusion
as the interaction average.
{
At the level of the interaction, the construction applies to general graphs equipped with a symmetric graph
Laplacian. The scaling limits established here concern graph
approximations of the Sierpi\'nski gasket.}
On graph approximations \(G_N\) of the Sierpi\'nski gasket \(K\),
we implement this construction through the discrete heat semigroup
\(P_\eta^N=e^{\eta L_N}\), with self-interactions removed.
Here \(L_N\) is the {rescaled graph Laplacian}, and the interaction heat time
\(\eta\) controls the scale of averaging.
Thus the interaction and the continuum diffusion are tied to the same underlying Dirichlet form, rather than to a convolution structure.
{Details of the construction can be found in} Section~\ref{sec:Heat Ker Kac}.

For the associated Glauber dynamics, we establish a hierarchy of
hydrodynamic and critical-temperature scaling limits on the gasket.
At fixed interaction heat time and inverse temperature, we derive
a nonlocal hydrodynamic equation
(Theorem~\ref{thm:hydro-lim}).
We then let the heat time vanish while tuning the inverse temperature
near its mean-field critical value and rescaling time and
magnetization.
The rescaled hydrodynamic solutions converge to the solution of
the deterministic Allen--Cahn equation
(Theorem~\ref{thm:allen-cahn-limit}).
These two results also give a successive-limit statement for the
microscopic dynamics (Corollary~\ref{cor:crit temp hydrp}),
in which the microscopic fluctuations disappear.

Our principal result, Theorem~\ref{thm:stochastic-limit}, shows that
this formulation realizes the dynamical Ising--Kac-to-$\Phi^4$
correspondence on $K$.
Under a joint scaling of the graph level, the interaction heat time,
and the inverse temperature near criticality,
the rescaled magnetization field converge to the stochastic Allen--Cahn equation
\[
    \partial_t\Phi
    =
    L\Phi+A\Phi-\frac13\Phi^3+\sqrt{2/\theta}\,\xi.
\]
Here \(L\) is the Neumann Laplacian on \(K\), and \(\xi\) is
space--time white noise with intensity \(dt\,m(dx)\), where \(m\)
is the normalized self-similar measure.
The constants \(A\in\mathbb R\) and \(\theta\in(0,\infty)\)
are determined by the temperature tuning and fluctuation scaling.
Since the spectral dimension of $K$ is $d_s=2\log3/\log5<2$, the
equation requires no renormalization (Remark~\ref{rmk:higher dim}).
The contribution is a diffusion-based formulation of Kac interactions,
together with a rigorous realization of its hydrodynamic and
nonlinear stochastic limits on the gasket.

\paragraph{Organization of the paper.}
The remainder of this section defines the model, states the main
results, and discusses their relation to previous work.
Section~\ref{sec:gasket} recalls the resistance geometry,
harmonic extension, and heat-semigroup approximation on the gasket.
Sections~\ref{sec:hydro proof} and~\ref{sec:det AC lim proof}
prove Theorems~\ref{thm:hydro-lim}
and~\ref{thm:allen-cahn-limit}, respectively.
Section~\ref{sec:phi4 lim proof} establishes global well-posedness
of the limiting SPDE and proves
Theorem~\ref{thm:stochastic-limit}, following the local-field
strategy of Mourrat and Weber~\cite{IsingKac-Phi4-2D}.

\subsection*{Acknowledgments}
We thank Michael Aizenman, Patricia Goncalves, Martin Hairer, Konstantin Matetski, and Hendrik Weber for helpful discussions and encouragement. H.C. is supported by the research grant (VIL73729) from Villum Fonden. 

\subsection*{Declaration of generative AI use}
We gratefully acknowledge the assistance of GPT-6 Astra plus for detail refinement (including the identification and removal of logical gaps) and presentation optimization. The authors reviewed and edited all AI-assisted material and take full responsibility for the mathematical statements, proofs, and presentation of the article. 

\subsection{Heat Kernel type Kac interaction}\label{sec:Heat Ker Kac}

Let \(K\) be the standard Sierpi\'nski gasket, equipped with its normalized
self-similar measure \(m\). We write \(L\) for the Neumann Laplacian, with the
convention that \(L\) is non-positive, and
\[
P_t=e^{tL},\qquad t\geq0,
\]
for the associated conservative symmetric heat semigroup. Its kernel \(p_t(x,y)\) is a density with respect to \(m\):
\[
(P_tf)(x)=\int_K p_t(x,y)f(y)\,m(dy).
\]
The generator domain on \(C(K)\) is
\[
D_C(L)=\left\{f\in C(K):\lim_{t\downarrow0}\frac{P_tf-f}{t}
\text{ exists in the uniform norm}\right\},
\]
and \(Lf\) denotes this limit. The construction and the discrete approximation of these objects are recalled in Section~\ref{sec:gasket}.

The feature of Kac interactions that we seek to retain is
\emph{mesoscopic averaging}.
We choose to replace translation-based Kac averaging by intrinsic
diffusion-scale averaging:

\begin{equation}\label{eq:intro-main-principle}
    \qquad
    J_\eta * u
    \quad\longrightarrow\quad
    P_\eta u .
    \qquad
\end{equation}

On Euclidean space, $P_\eta$ is convolution with the Gaussian heat kernel,
so \eqref{eq:intro-main-principle} selects a distinguished Kac family.
On a space without translation invariance, the heat semigroup remains
intrinsic: it is determined by the underlying Dirichlet form.
It also connects the nonlocal interaction to the limiting diffusion.
For functions in the generator domain,
\begin{equation}
\frac{P_\eta-I}{\eta}f\longrightarrow Lf
\quad\text{in }C(K),\qquad f\in D_C(L),\quad\eta\downarrow0.
\label{eq:revised-intro-2}
\end{equation}
{This choice also has an energy interpretation. Let \(\mathcal E\) denote
the Dirichlet energy recalled in Section~\ref{sec:gasket}. For
\(f\in D_C(L)\), symmetry and conservativity give
\[
\frac1{2\eta}\iint_{K\times K}
p_\eta(x,y)\bigl(f(x)-f(y)\bigr)^2\,m(dx)m(dy)
=\frac{\langle f,(I-P_\eta)f\rangle_{L^2(m)}}{\eta}
\longrightarrow\mathcal E(f,f),\qquad\eta\downarrow0.
\]
Thus the nonlocal quadratic energy defined by the interaction kernel approximates the energy of the prescribed diffusion.}
The parameter \(\eta\) is an interaction heat time, not the evolution time of the spin system.
Thus the same operator plays two roles.  At positive heat time it supplies the nonlocal Kac average, while at vanishing heat time its infinitesimal generator produces the local diffusion operator.  
This averaging/generator duality is the main structural reason for using the heat semigroup.

For $N\geq 0$, let $G_N=(V_N,E_N)$ be the level-$N$ graph approximation, where $x\sim y$ means that $x,y\in V_N$ are joined by an edge. We write
\[
  m_N=\frac{1}{|V_N|}\sum_{x\in V_N}\delta_x
\]
for the uniform probability measure on $V_N$.
We use the graph Laplacian
\begin{equation}
    (L_Nf)(x)
        :=
        \frac32\,5^N
        \sum_{\substack{y\in V_N\\y\sim x}}
        \bigl(f(y)-f(x)\bigr),
    \qquad x\in V_N,
    \label{eq:discrete-L}
\end{equation}
and denote by
\[
        P_t^N=e^{tL_N}
\]
its heat semigroup.
The factor $5^N$ is the natural diffusion-time renormalization on the gasket, while $3/2$ matches the conventional normalization of the continuum Neumann Laplacian.
Let $p_t^N(x,y)$ denote the heat-kernel density with respect to $m_N$, so that
\begin{equation}
    (P_t^Nf)(x)
      =
      \frac1{|V_N|}
      \sum_{y\in V_N}p_t^N(x,y)f(y).
    \label{eq:discrete-heat-kernel}
\end{equation}
Equivalently, $|V_N|^{-1}p_t^N(x,y)$ is the transition probability of the rescaled reflecting random walk from $x$ to $y$ in time $t$.
These densities converge uniformly to $p_t$ on compact time intervals contained in $(0,\infty)$, as established in Proposition~\ref{prop:gasket-approximation}.

For $\eta>0$, we define the heat-kernel Kac field of a spin configuration
\[
        \sigma\in\Sigma_N:=\{-1,+1\}^{V_N}
\]
by
\begin{equation}
    h_{N,\eta}(\sigma,x)
       :=
       \frac1{|V_N|}
       \sum_{\substack{y\in V_N\\y\neq x}}
       p_\eta^N(x,y)\sigma(y)
       =(P_\eta^N\sigma)(x)
      -
      \frac1{|V_N|}p_\eta^N(x,x)\sigma(x)
       .
    \label{eq:heat-kernel-Kac-field}
\end{equation}
The diagonal is removed so that a spin does not contribute to its own local field. The omitted
diagonal weight vanishes as $N\to\infty$ for fixed $\eta$; its effect
under the joint critical scaling is estimated in
Section~\ref{sec:phi4 lim proof}.
{The interaction weights are symmetric and nonnegative, so they define
a ferromagnetic Ising model with Hamiltonian
\[
\mathcal H_{N,\eta}(\sigma)
=-\frac12\sum_{x\in V_N}\sigma(x)h_{N,\eta}(\sigma,x).
\]
Given inverse temperature $\beta\geq0$, the heat-bath Glauber rate
for the Gibbs measure proportional to \(e^{-\beta\mathcal H_{N,\eta}}\)
at $x\in V_N$ is}
\begin{equation}
    c_{N,\eta,\beta}(\sigma,x)
       :=
       \frac12
       \left[
          1-\sigma(x)
          \tanh\!\bigl(\beta h_{N,\eta}(\sigma,x)\bigr)
       \right].
    \label{eq:glauber-rate}
\end{equation}
These rates favor alignment with the local field, with $\beta$ controlling the strength of this preference.
If $\sigma^x$ denotes the configuration obtained by flipping the spin at $x$, the Markov generator acting on $F:\Sigma_N\to\mathbb R$ is
\begin{equation}
    (\mathcal G_{N,\eta,\beta}F)(\sigma)
       =
       \sum_{x\in V_N}
       c_{N,\eta,\beta}(\sigma,x)
       \bigl[F(\sigma^x)-F(\sigma)\bigr].
    \label{eq:spin-generator}
\end{equation}
For the process \((\sigma_t^N)_{t\geq0}\) generated by \eqref{eq:spin-generator}, define the empirical magnetization measure
\begin{equation}
    \pi_t^N
       :=
       \frac1{|V_N|}
       \sum_{x\in V_N}\sigma_t^N(x)\delta_x.
    \label{eq:empirical-magnetization}
\end{equation}
For $\varphi\in C(K)$, define
\[
    \langle\pi_t^N,\varphi\rangle
      =
      \frac1{|V_N|}
      \sum_{x\in V_N}\varphi(x)\sigma_t^N(x).
\]
When a discrete function is viewed as a function on \(K\), we use its
energy-minimizing harmonic extension, denoted by \(\iota_N\) and recalled in Section~\ref{sec:gasket}.

\subsection{Main results}
We first state two
deterministic limits that identify the drift and explain its scaling: the nonlocal
hydrodynamic limit at fixed interaction heat time, followed by the critical
Allen--Cahn limit. Their combination gives a successive-limit statement for the spin
system. Theorem~\ref{thm:stochastic-limit} instead takes the graph and interaction
scales to their limits simultaneously, retaining a nondegenerate stochastic term.
\subsubsection{Fixed heat time: the nonlocal hydrodynamic equation}\label{sec: hydro result}

For fixed $\eta>0$, the interaction averages over a macroscopic region
containing an increasing number of vertices as $N\to\infty$.
A flip changes $\sigma(x)$ by $-2\sigma(x)$, so
\eqref{eq:glauber-rate} gives the local drift
$-\sigma(x)+\tanh(\beta h_{N,\eta}(\sigma,x))$.
In the hydrodynamic limit, heat-kernel averaging replaces the local
field by $P_\eta u$, while fluctuations of the empirical observables
vanish. This leads to the nonlocal equation
\begin{equation}\label{eq:nonlocal-hydrodynamic}
  \partial_tu_t(x)=-u_t(x)+\tanh(\beta P_\eta u_t(x)),\qquad u(0,x)=u_0(x).
\end{equation}

\begin{theorem}[Hydrodynamic limit]
\label{thm:hydro-lim}
Fix $\eta>0$, $\beta\geq0$, and $u_0\in C(K)$. Equation
\eqref{eq:nonlocal-hydrodynamic} has a unique global solution
$u\in C^1([0,\infty);C(K))$.
Suppose that the Glauber dynamics with generator
$\mathcal G_{N,\eta,\beta}$ have initial laws $\mu_N$ satisfying
\[
\langle\pi_0^N,\varphi\rangle
\xrightarrow[N\to\infty]{\mathbb P}
\int_K\varphi(x)u_0(x)\,m(dx)
\qquad\text{for every }\varphi\in C(K).
\]
Then, for every $T>0$ and $\varphi\in C(K)$,
\[
\sup_{0\le t\le T}
\left|
\left\langle\pi_t^N,\varphi\right\rangle
-
\int_K\varphi(x)u_t(x)\,m(dx)
\right|
\xrightarrow[N\to\infty]{\mathbb P}
0.
\]
\end{theorem}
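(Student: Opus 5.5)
We plan to work with the martingale decomposition for the test observables and close the argument by a Gronwall comparison with \eqref{eq:nonlocal-hydrodynamic}, using the fact that the interaction is a bounded, mean-field-type average at fixed $\eta$.

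\textbf{Well-posedness.} The map $F(u)=-u+\tanh(\beta P_\eta u)$ is globally Lipschitz on $C(K)$, with constant $1+\beta$, since $P_\eta$ is a Markov contraction on $C(K)$. It also maps $C(K)$ into $C(K)$, because $p_\eta$ is continuous for $\eta>0$. Picard iteration therefore gives a unique global solution $u\in C^1([0,\infty);C(K))$, and $\|u_t\|_\infty\le\max(\|u_0\|_\infty,1)$.

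\textbf{Martingale decomposition.} For $\varphi\in C(K)$ apply the generator \eqref{eq:spin-generator} to $F(\sigma)=\langle\pi^N,\varphi\rangle$. Then
\[
\langle\pi^N_t,\varphi\rangle=\langle\pi^N_0,\varphi\rangle+\int_0^t\frac1{|V_N|}\sum_x\varphi(x)\bigl[-\sigma_s(x)+\tanh(\beta h_{N,\eta}(\sigma_s,x))\bigr]ds+M^N_t(\varphi).
\]
The quadratic variation of $M^N(\varphi)$ is at most $4\|\varphi\|_\infty^2 t/|V_N|$, so Doob's inequality gives $\sup_{t\le T}|M^N_t|\to0$ in probability.

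Next we replace $h_{N,\eta}(\sigma,x)$ by a functional of $\pi^N$. We have $h_{N,\eta}(\sigma,x)=\int_K p^N_\eta(x,y)\,\pi^N(dy)-|V_N|^{-1}p^N_\eta(x,x)\sigma(x)$. The diagonal term is $O(|V_N|^{-1}\sup p^N_\eta)\to0$, because by Proposition~\ref{prop:gasket-approximation} the densities $p^N_\eta$ converge uniformly to $p_\eta$ and are therefore uniformly bounded. The same uniform convergence, together with $|\pi^N|\le m_N$ in total variation, gives
\[
\sup_x\Bigl|\int p^N_\eta(x,y)\,\pi^N(dy)-\int p_\eta(x,y)\,\pi^N(dy)\Bigr|\to0 .
\]
Here $p_\eta$ is extended continuously to $K\times K$, and this is uniform in the configuration.

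\textbf{Comparison with the continuum solution.} Set $v^N_t(x):=\int p_\eta(x,y)\,\pi^N_t(dy)$, which is a continuous function on $K$, and $w_t:=P_\eta u_t$. Test the decomposition with the family $\varphi=p_\eta(z,\cdot)$, $z\in K$, and compare with $w_t(z)=\int p_\eta(z,y)u_t(y)\,m(dy)$, which satisfies $\partial_t w=-w+P_\eta\tanh(\beta w)$. For $v^N$ one gets
\[
v^N_t(z)=v^N_0(z)-\int_0^t v^N_s(z)\,ds+\int_0^t\int p_\eta(z,y)\tanh(\beta v^N_s(y))\,m_N(dy)\,ds+o_{\mathbb P}(1).
\]
The key point is that $\tanh(\beta h)$ depends on $\sigma$ only through $v^N(x)$ up to uniform errors, and the drift is no longer linear in $\pi^N$, so this step needs care.

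Two further ingredients are needed. First, $m_N\to m$ weakly. Second, the family $\{p_\eta(z,\cdot)\}_z$ and $\{v^N_s\}$ is equicontinuous, since $p_\eta$ is uniformly continuous on $K\times K$. This allows us to replace $m_N$ by $m$ up to $o(1)$, uniformly in $z$ and $s\le T$.

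For the martingale and the initial datum we need control uniformly in $z$, not just for a single $\varphi$. We handle this by a finite $\delta$-net of points $z$ combined with the uniform continuity of $p_\eta$. Each point in the net carries an individual martingale estimate, and the initial convergence holds at finitely many test functions in probability.

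Writing $\Delta_t:=\|v^N_t-w_t\|_\infty$, the $1$-Lipschitz property of $\tanh$ gives
\[
\Delta_t\le\Delta_0+(1+\beta\|p_\eta\|_\infty)\int_0^t\Delta_s\,ds+o_{\mathbb P}(1)
\]
uniformly for $t\le T$. Gronwall's inequality then yields $\sup_{t\le T}\Delta_t\to0$ in probability.

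\textbf{Conclusion.} Return to a general $\varphi$. The drift term $\frac1{|V_N|}\sum_x\varphi(x)\tanh(\beta h_{N,\eta}(\sigma_s,x))$ equals $\int\varphi\tanh(\beta w_s)\,dm_N+o(1)$, and by continuity this equals $\int\varphi\tanh(\beta w_s)\,dm+o(1)$. Set $e_t:=\langle\pi^N_t,\varphi\rangle-\int\varphi u_t\,dm$. It satisfies $e_t=e_0-\int_0^te_s\,ds+o_{\mathbb P}(1)$, hence $\sup_{t\le T}|e_t|\to0$ in probability.

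The main obstacle is the uniform-in-$z$ closure, that is, controlling the nonlinear field simultaneously at all points. We handle it with the $\delta$-net and equicontinuity argument, which relies on the uniform heat kernel convergence of Proposition~\ref{prop:gasket-approximation}.
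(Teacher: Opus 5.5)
Your proposal is correct, but it follows a different route from the paper.

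\textbf{The paper's route.} It argues by compactness and uniqueness. It proves $C$-tightness of $(\pi^N_t)_{t\le T}$ in $D([0,T];B)$ and passes to a Skorokhod-coupled subsequence. Only then does it show $\sup_{t\le T,\,x\in V_N}|h_{N,\eta}(\sigma^N_t,x)-P_\eta u_t(x)|\to0$, using the already available weak convergence $\pi^N_t\to\pi_t$, uniform in time, together with a finite net of kernels $p_\eta(x_j,\cdot)$. After that it identifies the limit through the weak formulation, checks separately that the limiting density lies in $C([0,T];C(K))$, and invokes uniqueness for \eqref{eq:nonlocal-hydrodynamic}.

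\textbf{Your route.} You close a nonlinear equation directly for the smoothed field $v^N_t=P_\eta\pi^N_t$. This uses that, at fixed $\eta$, the rates depend on $\sigma$ only through this field, up to a deterministic uniform error. You then get $\sup_{t\le T}\|v^N_t-P_\eta u_t\|_\infty\to0$ in probability by Gronwall. The supporting steps check out:
\begin{itemize}
\item The net argument makes the martingale and initial terms uniform in $z$. This works because $\varphi\mapsto M^N_t(\varphi)$ is linear with $|M^N_t(\varphi)|\le(2+2T)\|\varphi\|_\infty$ deterministically.
\item Replacing $m_N$ by $m$ is legitimate. Since $\|\pi^N_s\|_{\mathrm{TV}}\le1$, the family $\{p_\eta(z,\cdot)\tanh(\beta v^N_s)\}$ is uniformly bounded and equicontinuous over all $z$, $s$, $N$ and configurations. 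Hence it is precompact in $C(K)$, and $m_N\Rightarrow m$ holds uniformly on it.
\item The final Gronwall step for $e_t$ is fine. It uses compactness of $\{\varphi\tanh(\beta w_s):s\le T\}$.
\end{itemize}

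\textbf{Comparison.} Your argument dispenses with tightness, Skorokhod representation, characterisation of limit points and the separate regularity check on the limiting density. It gives convergence in probability in one step. It is also in principle quantitative: the error is controlled by finitely many initial discrepancies, $\sup_{V_N\times V_N}|p^N_\eta-p_\eta|$, the $|V_N|^{-1}$ diagonal term, the $|V_N|^{-1/2}$ martingale size, and the rate of $m_N\Rightarrow m$ on the relevant compact family. It is essentially the Gronwall comparison mechanism the paper itself uses later in Section~\ref{sec:phi4 lim proof}. The paper's compactness template is the more standard and robust one when no Lipschitz closure is available. In this mean-field setting at fixed $\eta$, though, it is the longer path.
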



\subsubsection{Critical scaling: the deterministic Allen--Cahn equation}\label{sec:hydro to AC}


The critical scaling is suggested by spatially constant solutions of
\eqref{eq:nonlocal-hydrodynamic}.
Since \(P_\eta\) preserves constants, these satisfy
\[
    a'(t)=-a(t)+\tanh(\beta a(t)).
\]
Near \(a=0\),
\[
    -a+\tanh(\beta a)
    =
    (\beta-1)a-\frac{\beta^3}{3}a^3+O(a^5).
\]
Thus the homogeneous linear term vanishes at the mean-field
critical value \(\beta=1\).

For spatially varying profiles, the linearized drift at this value
is instead \((P_\eta-I)u\).
By \eqref{eq:revised-intro-2},
\((P_\eta-I)f/\eta\to Lf\) for \(f\in D_C(L)\), suggesting
time acceleration by \(\eta^{-1}\) to retain the diffusion term.
On this time scale, an inverse-temperature deviation of order
\(\eta\) and magnetization of order \(\sqrt\eta\) balance the
linear and cubic reaction terms.
These observations lead to the critical rescaling in the
following theorem.

\begin{theorem}[Deterministic Allen--Cahn limit]
\label{thm:allen-cahn-limit}
Fix $A\in\mathbb R$ and $\beta_\eta\ge0$ such that
$(\beta_\eta-1)/\eta\to A$. 
Fix an initial profile $X_0 \in C(K)$, and let $u^\eta$ be the solution to \eqref{eq:nonlocal-hydrodynamic} with initial data $u^\eta(0,\cdot) = \sqrt{\eta}\,X_0$.
For $t \geq 0$, define the  rescaled magnetization field
\begin{equation}
    X_t^\eta := \eta^{-1/2} u^\eta_{t/\eta}.
\end{equation}
Then, for every $T>0$, 
\[
    X^\eta \longrightarrow X \qquad \text{in } \ C([0,T];C(K)) \quad \text{as } \eta\downarrow0,
\]
where $X$ is the unique mild solution to the local Allen--Cahn equation
\begin{equation}
    \begin{cases}
    \partial_t X = LX + AX - \frac{1}{3}X^3, & t>0,\\[0.3em]
    X(0,\cdot) = X_0.
    \end{cases}
    \label{eq:allen-cahn}
\end{equation}
\end{theorem}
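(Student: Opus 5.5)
We first rewrite the equation for $X^\eta$ and identify it as a perturbation of the Allen--Cahn equation. Since $u^\eta_s=\sqrt\eta\,X^\eta_{\eta s}$, we obtain
\[
\partial_tX^\eta_t=\frac{P_\eta-I}{\eta}X^\eta_t+\eta^{-3/2}\Bigl[\tanh\bigl(\beta_\eta\sqrt\eta\,P_\eta X^\eta_t\bigr)-\beta_\eta\sqrt\eta\,P_\eta X^\eta_t\Bigr]+\frac{\beta_\eta-1}{\eta}P_\eta X^\eta_t .
\]
Writing $\tanh z=z-\tfrac13 z^3+R(z)$ with $|R(z)|\le C|z|^5$, the bracket divided by $\eta^{3/2}$ equals $-\tfrac13\beta_\eta^3(P_\eta X^\eta)^3+\eta^{-3/2}R(\sqrt\eta\beta_\eta P_\eta X^\eta)$. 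The last term is $O(\eta\|X^\eta\|_\infty^5)$. Set $L_\eta:=(P_\eta-I)/\eta$. This is a bounded generator with $e^{tL_\eta}=e^{-t/\eta}e^{(t/\eta)P_\eta}$, which is positive and contracting on $C(K)$. The mild form is therefore
\[
X^\eta_t=e^{tL_\eta}X_0+\int_0^te^{(t-s)L_\eta}F_\eta(X^\eta_s)\,ds,
\]
with $F_\eta(v)=A_\eta P_\eta v-\tfrac13\beta_\eta^3(P_\eta v)^3+O(\eta|P_\eta v|^5)$, where $A_\eta=(\beta_\eta-1)/\eta\to A$.

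\textbf{Step 1: uniform a priori bound.} We will show $\sup_{\eta\le\eta_0}\sup_{t\le T}\|X^\eta_t\|_\infty<\infty$, and we prefer a comparison argument to Gronwall because of the cubic term. Spatially constant solutions $a^\eta(t)$ of the scaled ODE (with $P_\eta$ acting trivially on constants) stay bounded on $[0,T]$ uniformly in $\eta$, since the drift $A_\eta a-\tfrac13 a^3+O(\eta a^5)$ is dissipative for large $a$ as long as $\eta a^2$ stays small. The original nonlocal equation is order preserving: $\tanh(\beta P_\eta\cdot)$ is monotone and $P_\eta$ is positive, so it is a cooperative system. Hence $-a^\eta(t)\le X^\eta_t\le a^\eta(t)$ when $a^\eta(0)=\|X_0\|_\infty$. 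Some care is needed to keep the comparison ODE in the region $\sqrt\eta\,a\ll1$; this holds for small $\eta$, because $a^\eta$ is controlled by the solution of the limiting ODE.

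\textbf{Step 2: semigroup convergence.} We need $e^{tL_\eta}f\to P_tf$ uniformly for $t\le T$ and $f\in C(K)$. On $D_C(L)$, \eqref{eq:revised-intro-2} gives $L_\eta f\to Lf$. Since $D_C(L)$ is a core and $e^{tL_\eta}$ are contractions, the Trotter--Kato theorem gives strong convergence, uniform on $[0,T]$. Alternatively, one can use the spectral decomposition in $L^2$ together with ultracontractivity, since $L_\eta$ commutes with $L$. We also use that $P_\eta v\to v$ uniformly on compact subsets of $C(K)$. Thus we need compactness, or at least a uniform modulus of continuity, of $\{X^\eta_s\}$.

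\textbf{Step 3: identification via stability.} Let $X$ be the mild solution of \eqref{eq:allen-cahn}, which exists globally in $C(K)$ by a standard local Lipschitz fixed point plus the comparison bound. Subtracting the two mild forms gives
\[
X^\eta_t-X_t=(e^{tL_\eta}-P_t)X_0+\int_0^t e^{(t-s)L_\eta}\bigl[F_\eta(X^\eta_s)-F_\eta(X_s)\bigr]ds+\int_0^t\bigl[e^{(t-s)L_\eta}F_\eta(X_s)-P_{t-s}F(X_s)\bigr]ds .
\]
By Step 1, $F_\eta$ is Lipschitz on the relevant ball with a constant independent of $\eta$. The first and third terms tend to zero uniformly in $t$. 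For the third term, $\{X_s\}_{s\le T}$ is compact in $C(K)$ by continuity of the mild solution, so $F_\eta(X_s)\to F(X_s)$ uniformly in $s$, and Step 2 applies uniformly on compacts. Gronwall's inequality then concludes the argument. This route avoids equicontinuity of $X^\eta$ altogether, because $P_\eta$ is only ever applied to the nice limit $X$.

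The main obstacle is Step 2 on the compact set $\{F(X_s)\}$: we need $e^{rL_\eta}g\to P_rg$ uniformly over $r\le T$ and $g$ in a compact set. Equicontinuity of the contractions reduces this to pointwise convergence, which Trotter--Kato provides.
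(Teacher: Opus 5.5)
Your proposal is correct and follows essentially the same route as the paper. Both use the mild formulation with $S^\eta_t=e^{tL_\eta}$, Trotter--Kato for the semigroups, uniform local Lipschitz bounds on $F_\eta$ together with uniform convergence $F_\eta\to F$ on the compact orbit $\{X_s:s\le T\}$ of the limit, and Gronwall on the same three-term decomposition. The only difference is Step 1. The paper gets the a priori bound more cheaply, by a linear Gronwall estimate in the form $\partial_tX^\eta=-\eta^{-1}X^\eta+\eta^{-3/2}\tanh(\beta_\eta\sqrt\eta\,P_\eta X^\eta)$ using $|\tanh z|\le|z|$, which gives $\|X^\eta_t\|_\infty\le e^{t(\beta_\eta-1)/\eta}\|X_0\|_\infty$. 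This also shows that your worry about the cubic term, and about keeping $\sqrt\eta\,a$ small in the comparison ODE, is unnecessary.
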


Combining this theorem with the hydrodynamic limit gives a consequence
for the microscopic dynamics. Write $\sigma^{N,\eta}$ for the process
with generator $\mathcal G_{N,\eta,\beta_\eta}$ and $\pi^{N,\eta}$ for
its empirical magnetization. Its rescaled empirical field is
\begin{equation}\label{eq:microscopic-critical-field}
 \mathcal X_t^{N,\eta}(\varphi)
 =\eta^{-1/2}\langle\pi_{t/\eta}^{N,\eta},\varphi\rangle
 =\frac1{\sqrt\eta\,|V_N|}
   \sum_{x\in V_N}\varphi(x)\sigma_{t/\eta}^{N,\eta}(x),
 \qquad \varphi\in C(K).
\end{equation}
Suppose that, for each sufficiently small fixed
$\eta>0$, the initial laws satisfy
\[
\mathcal X_0^{N,\eta}(\psi)
\xrightarrow[N\to\infty]{\mathbb P}
\int_K X_0(x)\psi(x)\,m(dx)
\qquad\text{for every }\psi\in C(K).
\]
so the initial spin laws are associated with the profile
$\sqrt\eta\,X_0$. Then we have the following Corollary.

\begin{corollary}\label{cor:crit temp hydrp}
Under the temperature scaling of Theorem~\ref{thm:allen-cahn-limit}, fix \(X_0\in C(K)\).
Then, for every \(T,\varepsilon>0\) and \(\varphi\in C(K)\),
\[
 \lim_{\eta\downarrow0}\limsup_{N\to\infty}
 \mathbb P\!\left(
  \sup_{0\le t\le T}
  \left|\mathcal X_t^{N,\eta}(\varphi)
        -\int_KX(t,x)\varphi(x)\,m(dx)\right|>\varepsilon
 \right)=0,
\]
where $X(t,\cdot)$ is the solution of determinisitc Allen-Cahn \eqref{eq:allen-cahn} starting from \(X_0\in C(K)\).
\end{corollary}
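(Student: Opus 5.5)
The plan is a triangle inequality that splits the error into a microscopic part, controlled by Theorem~\ref{thm:hydro-lim} at fixed $\eta$, and a deterministic part, controlled by Theorem~\ref{thm:allen-cahn-limit}. For fixed small $\eta>0$, let $u^\eta$ be the solution of \eqref{eq:nonlocal-hydrodynamic} with $\beta=\beta_\eta$ and $u^\eta(0)=\sqrt\eta\,X_0$, and set $X^\eta_t=\eta^{-1/2}u^\eta_{t/\eta}$. Then
\[
\Bigl|\mathcal X_t^{N,\eta}(\varphi)-\int_K X(t)\varphi\,dm\Bigr|
\le \eta^{-1/2}\Bigl|\langle\pi^{N,\eta}_{t/\eta},\varphi\rangle-\int_K u^\eta_{t/\eta}\varphi\,dm\Bigr|
+\|\varphi\|_\infty\,\|X^\eta_t-X(t)\|_{C(K)},
\]
where I used $\int_K|\cdot|\,dm\le\|\cdot\|_\infty$ because $m$ is a probability measure.

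\textbf{Microscopic term.} Fix $\eta$. The initial assumption $\mathcal X^{N,\eta}_0(\psi)\to\int X_0\psi\,dm$ in probability, after multiplying by $\sqrt\eta$, says that $\langle\pi^{N,\eta}_0,\psi\rangle\to\int(\sqrt\eta X_0)\psi\,dm$ for every $\psi\in C(K)$. This is exactly the hypothesis of Theorem~\ref{thm:hydro-lim} with initial profile $\sqrt\eta X_0\in C(K)$, heat time $\eta$, and inverse temperature $\beta_\eta$. Apply that theorem with horizon $T/\eta$. Because $\eta$ is fixed, the factor $\eta^{-1/2}$ and the finite horizon are harmless, so
\[
\eta^{-1/2}\sup_{0\le s\le T/\eta}\Bigl|\langle\pi^{N,\eta}_s,\varphi\rangle-\int u^\eta_s\varphi\,dm\Bigr|\to0
\]
in probability as $N\to\infty$. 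The supremum over $t\in[0,T]$ of the first term is this quantity with $s=t/\eta$.

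\textbf{Deterministic term.} It is independent of $N$. By Theorem~\ref{thm:allen-cahn-limit}, $\delta_\eta:=\sup_{t\le T}\|X^\eta_t-X(t)\|_\infty\to0$ as $\eta\downarrow0$. Given $\varepsilon>0$, choose $\eta_0$ such that $\|\varphi\|_\infty\delta_\eta<\varepsilon/2$ for all $\eta<\eta_0$. For such $\eta$ the probability in the corollary is at most the probability that the microscopic term exceeds $\varepsilon/2$. By the previous step, its $\limsup_{N\to\infty}$ is $0$. Letting $\eta\downarrow0$ then gives the claim.

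\textbf{Main obstacle.} There is essentially none, since the statement is a successive limit and no uniformity in $\eta$ of the $N\to\infty$ convergence is needed. The only point to check is that the hypotheses of Theorem~\ref{thm:hydro-lim} hold for each fixed $\eta$: $\beta_\eta\ge0$ and $\sqrt\eta X_0\in C(K)$. Both are immediate.
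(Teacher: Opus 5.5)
Your proposal is correct and follows the paper's proof: you apply Theorem~\ref{thm:hydro-lim} at fixed $\eta$ on $[0,T/\eta]$ with initial profile $\sqrt\eta\,X_0$, and then use the triangle inequality together with Theorem~\ref{thm:allen-cahn-limit} for the $N$-independent deterministic error. The paper leaves implicit both the check of the initial-data hypothesis and the $\varepsilon/2$ bookkeeping, and you carry these out explicitly.
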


\begin{proof}
For fixed $\eta$, Theorem~\ref{thm:hydro-lim} on $[0,T/\eta]$
gives convergence in probability of $\mathcal X^{N,\eta}(\varphi)$
to $\int_KX^\eta(t)\varphi\,dm$, uniformly for $t\le T$.
The conclusion follows by the triangle inequality and
Theorem~\ref{thm:allen-cahn-limit}.
\end{proof}

\subsubsection{Joint critical scaling: the dynamical \(\Phi^4\)/stochastic Allen--Cahn equation}\label{sec:phi4 lim}

The successive limits in Corollary~\ref{cor:crit temp hydrp} eliminate
the microscopic fluctuations before the interaction heat time vanishes.
To capture the critical stochastic fluctuations, we apply the same time and amplitude scalings but
choose \(\eta=\eta_N\downarrow0\) jointly with
\(N\to\infty\) to make microscopic
fluctuations remain comparable to the critical magnetization scale.

A flip at \(x\) changes the field in \eqref{eq:microscopic-critical-field} by
\[
-\frac{2\varphi(x)\sigma(x)}{\sqrt{\eta_N}\,|V_N|},
\]
and the time acceleration multiplies each flip rate by \(\eta_N^{-1}\). The natural
scale of the predictable quadratic-variation rate is therefore
\((|V_N|\eta_N^2)^{-1}\). We impose the balance
\begin{equation}
|V_N|\eta_N^2\longrightarrow\theta\in(0,\infty).
\label{eq:noise-balance}
\end{equation}
Since \(|V_N|=(3^{N+1}+3)/2\), this gives \(\eta_N\asymp3^{-N/2}\) and, in particular,
\(5^{-N}\ll\eta_N\ll1\). Thus the interaction is mesoscopic: its heat time vanishes on
the continuum scale but remains much larger than the time scale for a single graph
step.

When the rescaled local field \(\eta_N^{-1/2}h_{N,\eta_N}\) remains bounded, the flip
rates converge uniformly to \(1/2\). yielding the noise coefficient $\sqrt{2/\theta}$. This approximation
is justified by localization in
Section~\ref{sec:phi4 lim proof}.
With the same temperature tuning as in
Theorem~\ref{thm:allen-cahn-limit}, the candidate limiting equation is
\begin{equation}\label{eq:phi4}
 \begin{cases}
  \partial_t\Phi=L\Phi+A\Phi-\frac13\Phi^3
                  +\sqrt{2/\theta}\,\xi, & t>0,\\
  \Phi(0,\cdot)=\Phi_0,
 \end{cases}
\end{equation}
where \(\Phi_0\in C(K)\) is given and \(\xi\) is space--time white noise with
intensity \(dt\,m(dx)\).
For small $t>0$, the on-diagonal heat-kernel bound $p_t(x,x)\le Ct^{-\frac{d_s}{2}}$ holds with $\frac{d_s}{2}=\frac{\log 3}{\log 5}< 1$, so $\int_0^t p_{2s}(x,x)\,ds<\infty$ and the stochastic convolution has finite pointwise variance. The bounds are discussed with more detail in Section~\ref{sec:gasket}; continuity of the stochastic convolution
is recalled and global well-posedness of \eqref{eq:phi4} is proved at
the start of Section~\ref{sec:phi4 lim proof}.

We state the initial condition in terms of the heat-smoothed empirical
field. Discrete functions in this condition are harmonically extended
to $K$, as recalled in Section~\ref{sec:gasket}.
The path space in the following theorem carries the local $J_1$
topology.

\begin{theorem}[Dynamical $\Phi^4$ on the gasket]\label{thm:stochastic-limit}
Assume the fluctuation balance \eqref{eq:noise-balance} and
$(\beta_{\eta_N}-1)/\eta_N\to A$. If
\[
 \iota_N\!\left[
   \mathcal X_0^{N,\eta_N}(p_{\eta_N}^N(x,\cdot))\right]
 \Longrightarrow\Phi_0
 \quad\text{in }C(K),
\]
then $\mathcal{X}^{N,\eta}$ converges in law to the solution $\Phi$ of the stochastic Allen--Cahn equation \eqref{eq:phi4}: for every finite collection
$\varphi_1,\ldots,\varphi_k\in D_C(L)$,
\[
 \bigl(\mathcal X^{N,\eta_N}(\varphi_i)\bigr)_{i=1}^k
 \Longrightarrow
 \left(\int_K\Phi_t(x)\varphi_i(x)\,m(dx)\right)_{i=1}^k
 \quad\text{in }D(\mathbb R_+;\mathbb R^k).
\]
\end{theorem}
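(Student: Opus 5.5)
We follow the local-field strategy of Mourrat--Weber, adapted to the heat-kernel interaction. The basic object is the rescaled local field
\[
X_N(t,x):=\eta_N^{-1/2}\,\bigl(P^N_{\eta_N}\sigma_{t/\eta_N}\bigr)(x)=\mathcal X^{N,\eta_N}_t\bigl(p^N_{\eta_N}(x,\cdot)\bigr),\qquad x\in V_N,
\]
which by \eqref{eq:heat-kernel-Kac-field} differs from $\eta_N^{-1/2}h_{N,\eta_N}$ only by the diagonal term $\eta_N^{-1/2}|V_N|^{-1}p^N_{\eta_N}(x,x)\sigma(x)$. Using the on-diagonal bound $p^N_{\eta}(x,x)\lesssim \eta^{-d_s/2}$ (uniform in $N$ for $\eta\ge 5^{-N}$, from Section~\ref{sec:gasket}), this term is $O(\eta_N^{-1/2-d_s/2}3^{-N})$. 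With $\eta_N\asymp 3^{-N/2}$ and $d_s/2=\log3/\log5<1$, this is $O(3^{-N(1/4-d_s/4)})\to0$, so the diagonal term is negligible.

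Applying the generator to $\sigma(x)$ and then $P^N_{\eta_N}$ gives, by Duhamel's formula with the semigroup $e^{t(P^N_{\eta_N}-I)/\eta_N}$, a mild formulation
\[
X_N(t)=e^{t\Delta_N}X_N(0)+\int_0^t e^{(t-s)\Delta_N}P^N_{\eta_N}\,\eta_N^{-3/2}\bigl[\tanh(\beta_N h)-\beta_N h\bigr]ds+\int_0^t e^{(t-s)\Delta_N}\frac{\beta_N-1}{\eta_N}X_N\,ds+Z_N(t).
\]
Here $\Delta_N=(P^N_{\eta_N}-I)/\eta_N$, and $Z_N$ is the martingale convolution. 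The drift $-\sigma+\tanh(\beta h)$ is split as $(P-I)\sigma$, plus $(\beta-1)h$, plus the cubic remainder $\tanh(\beta h)-\beta h=-\tfrac13 h^3+O(h^5)$.

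The steps, in order, are as follows.

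\textbf{(i) Approximation of the linear part.} We show that $e^{t\Delta_N}$ approximates $P_t=e^{tL}$ uniformly on $C(K)$ after harmonic extension $\iota_N$. This combines Proposition~\ref{prop:gasket-approximation} with the generator convergence \eqref{eq:revised-intro-2}, applied spectrally: the eigenvalues of $\Delta_N$ are $(e^{-\eta_N\lambda}-1)/\eta_N\to-\lambda$ for $\lambda\ll\eta_N^{-1}$, while high modes are damped at rate at least $c\,\eta_N^{-1}$.

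\textbf{(ii) Tightness and convergence of $Z_N$.} The predictable quadratic variation rate of the martingale for $\mathcal X(\varphi)$ is
\[
\eta_N^{-1}\cdot 4\,(\eta_N|V_N|^2)^{-1}\sum_x\varphi(x)^2 c(\sigma,x)\approx \frac{2}{\theta}\int\varphi^2\,dm,
\]
using $c\approx1/2$ and \eqref{eq:noise-balance}. We prove moment bounds on $Z_N(t,x)$ and on its increments in $x$ and $t$, by Burkholder--Davis--Gundy with jumps of size $O(\eta_N^{-1/2}|V_N|^{-1}\sup p_{\eta_N}^N)\to0$ together with the heat-kernel regularity. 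Kolmogorov's criterion (in resistance metric) then gives tightness in $C$, with limit the stochastic convolution.

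\textbf{(iii) Da Prato--Debussche decomposition.} Set $V_N=X_N-Z_N$. Since $|V_N|^{-1}\sum_y p(x,y)\sigma(y)^2$ is exact for spins, the cubic term $X_N^3$ needs no Wick renormalization (consistent with $d_s<2$). The error $P^N_{\eta_N}(h^3)-(P^N_{\eta_N}h)^3$ is controlled because $h$ is already heat-smoothed: $h-P_{\eta_N}h$ is small in sup norm by the Hölder continuity of $h$ at scale $\eta_N^{1/d_w}$. The remainder is then bounded a priori via a localization argument: we stop at $\tau_N=\inf\{t:\|X_N(t)\|_\infty\ge R\}$. The cubic damping $-\tfrac13V^3$ gives a uniform bound for $V_N$ (a comparison/maximum-principle argument on the mild equation), so that $\mathbb P(\tau_N\le T)\to0$ as $R\to\infty$ uniformly in $N$. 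This justifies the $\tanh$ expansion and $c\to1/2$.

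\textbf{(iv) Passage to the limit.} With tightness and the continuity of the solution map $(\Phi_0,Z)\mapsto\Phi$ (from the well-posedness of \eqref{eq:phi4}), any limit solves \eqref{eq:phi4}, and uniqueness identifies it. Finally, for $\varphi\in D_C(L)$, we have $\mathcal X(\varphi)-\langle X_N,\varphi\rangle_{m_N}=\mathcal X((I-P_{\eta_N})\varphi)=O(\eta_N^{1/2}\|L\varphi\|\,\|\pi\|\eta_N^{-1/2}\cdot\eta_N^{1/2})\to0$, which transfers the convergence to the test functions and explains the assumption $\varphi_i\in D_C(L)$. The $J_1$ convergence follows since the jumps vanish.

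The main obstacle is step (iii): obtaining the uniform-in-$N$ sup-norm bound on $V_N$ together with the error estimates for replacing $P(h^3)$ by $(Ph)^3$ in the absence of translation invariance. We would first try to derive these from the Hölder regularity of $p^N_\eta$ in the resistance metric, uniform in $N$.
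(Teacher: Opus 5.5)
\textbf{First gap: the field you chose.} The field $X_N=\eta^{-1/2}P^N_\eta\sigma_{\cdot/\eta}$ does not satisfy a closed equation, and the diagonal term you dismiss is not negligible in the drift. The rates depend on the true local field $h=P^N_\eta\sigma-D_N\sigma$, so the exact drift of $X_N$ is
\[
 \frac{P^N_\eta-I}{\eta}X_N+\frac{\beta-1}{\eta}P^N_\eta X_N
 -\beta\,\eta^{-3/2}P^N_\eta D_N\sigma
 +\eta^{-3/2}P^N_\eta\bigl[\tanh(\beta h)-\beta h\bigr].
\]
Your mild formula drops the third term.

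The diagonal is $O(\eta^{3/2-\alpha})$ in the field. Your exponent should be $3^{-N(3/4-d_s/4)}$; the one you wrote, $1/4-d_s/4$, is negative since $d_s>1$. The time acceleration, however, multiplies this by $\eta^{-1}$. The sup-norm bound on the dropped term is then only $\eta^{-3/2}\|D_N\|_{\infty\to\infty}\le C\eta^{1/2-\alpha}$, which diverges because $\alpha=\log3/\log5>1/2$. Nor can the term be controlled through $X_N$: $p^N_\eta(x,x)$ varies with $x$, so $D_N$ does not commute with $P^N_\eta$.

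The paper avoids this by using the field that actually enters the rates, $H^N=\eta^{-1/2}K_N\sigma$ with $K_N=P^N_\eta-D_N$, and the semigroup $S^N_t=\exp(\frac t\eta(K_N-I))$. With this choice:
\begin{itemize}
\item equation \eqref{eq:stochastic-local-field} closes exactly;
\item the diagonal enters only through $\|D_N\|_{\infty\to\infty}/\eta\le C\eta^{1-\alpha}\to0$;
\item the nonlinearity $K_Nf_\eta(H^N)$ is handled by strong convergence of $S^N_rK_N$ on compact families plus Gronwall, so you need neither a comparison of $P(h^3)$ with $(Ph)^3$ nor H\"older regularity of $h$.
\end{itemize}
The price is paid in your step (ii). First, $S^N_tK_N$ is not diagonal in the eigenbasis of $L_N$. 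Proposition~\ref{prop:microscopic-kernel} therefore works with the resolvent of $C_N=(I-K_N)/\eta$ and must keep the factor $K_N^2$; dropping it leaves the useless bound $R_N(x,y)+2\eta n$. Second, Kolmogorov's criterion does not apply directly to the jump process. The temporal moments carry the term $\eta^{(3/2-\alpha)(p-2)}|t-s|^{1-\alpha}$, so the paper chains on a time grid of mesh $\asymp\eta^2$ and controls oscillations between grid times separately.

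\textbf{Second gap: the a priori bound in step (iii)}, which you flag yourself. A maximum principle driven by ``$-\tfrac13V^3$'' does not work as stated, for two reasons. The cubic form is available only once $\|H^N\|_\infty$ is already bounded. And the nonlinearity enters nonlocally, as the average $K_Nf_\eta(Y+V)$, which need not be negative at a maximum point of $V$.

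The paper proves no a priori bound. Its argument runs as follows.
\begin{itemize}
\item It truncates the drift, $f_{\eta,R}=f_\eta\circ\chi_R$.
\item It continues the martingale after $\tau_R^N$ with independent Poisson noise, so the rates stay within $\frac12\beta R\sqrt\eta$ of $\frac12$ for all times.
\item It proves $H^{N,R}\Rightarrow\Phi^R$ for each fixed $R$.
\item It then gets $\limsup_N\mathbb P(\tau_R^N\le T)\le\mathbb P(\sup_{t\le T}\|\Phi_t\|_\infty\ge R)$ from Portmanteau and global well-posedness of \eqref{eq:phi4}.
\end{itemize}
The exit probability is thus controlled by the limit, not by a microscopic estimate.

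Your route can still be completed in the closed formulation, by a mild comparison rather than a pointwise one. Put $Y=S^N_\cdot H^N_0+Z^N$ and $V=H^N-Y$. Let $z^*_\eta\ge0$ be the largest zero of $f_\eta$, so $f_\eta\le0$ on $[z^*_\eta,\infty)$ and $z^*_\eta\to\sqrt{3A_+}$.
\begin{itemize}
\item The constant $M=\sup_{t\le T}\|Y_t\|_\infty+z^*_\eta$ is a supersolution for $V$, because $K_N\ge0$ and $K_N\mathbf 1\le\mathbf 1$.
\item Let $a$ be the difference quotient of $f_\eta$ between $Y+V$ and $Y+M$. Since $f_\eta'\ge-1/\eta$, $w=M-V$ satisfies $\dot w\ge-\eta^{-1}w+K_N[(a+\eta^{-1})w]$ with $a+\eta^{-1}\ge0$. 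This system preserves positivity, so $w\ge0$; the same argument gives $V\ge-M$.
\item Hence $\sup_{t\le T}\|H^N_t\|_\infty\le2\|H^N_0\|_\infty+2\sup_{t\le T}\|Z^N_t\|_\infty+z^*_\eta$.
\item The moment and chaining bounds for $Z^N$ use only the rate bound $c\le1$ (Lemma~\ref{lem:auxiliary-martingale}), so the right side is tight uniformly in $N$.
\end{itemize}
This gives exactly the localization you want, with $c^N\to\frac12$ in probability. Your final transfer to $\varphi\in D_C(L)$ via $\|(I-P^N_\eta)\varphi\|_\infty\le\eta\|L\varphi\|_\infty$ is the same as the paper's.
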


\begin{remark}\label{rmk:higher dim}
    The spectral dimension governs how effectively this diffusion smooths
space--time white noise. Write $p_t(x,y)$ for the heat-kernel density
with respect to the normalized self-similar measure $m$ on $K$.
As mentioned earlier, the short-time heat kernel diagonal estimate on the gasket is
$p_t(x,x)\asymp t^{-d_s/2}$, where $d_s=2\log3/\log5<2$
\cite{BarlowPerkins1988,Kigami2001}.
By symmetry, and the semigroup identity, the stochastic
convolution driven by unit space--time white noise has pointwise variance
\[
 \int_0^t\int_K p_s(x,y)^2\,m(dy)\,ds
 =\int_0^t p_{2s}(x,x)\,ds.
\]
The integral is finite because $d_s<2$. Heat-kernel regularity estimates
further give a continuous version of the stochastic convolution
\cite{HamblyYang2018}, so the cubic term in the limiting equation is
defined pointwise without renormalization.
When \(d_s\ge2\), the linear field is already too singular for this pointwise interpretation. Consequently, the ordinary cubic must be replaced by a renormalized expression.\\
For extensions to spaces with spectral dimension $d_s\ge4$, formal power
counting predicts criticality at $d_s=4$ and supercriticality at $d_s>4$.
In Euclidean space, the cubic stochastic Allen--Cahn equation is locally
subcritical precisely when $d<4$, so the subcritical renormalization
framework does not directly extend to this regime
\cite[Section~8]{Hairer2014}.
Gaussianity results for critical Euclidean Ising and $\varphi^4$ scaling
limits suggest looking for Gaussian limits
\cite[Sections~8 and~10]{Aizenman2022}.
These equilibrium results, however, do not determine the limiting
dynamics on fractals. Identifying the appropriate scaling and whether
the resulting limit is Gaussian or retains a nonlinear interaction
requires a separate analysis.
\end{remark}

{
The proof of Theorem~\ref{thm:stochastic-limit} follows the
local-field strategy of Mourrat and
Weber~\cite{IsingKac-Phi4-2D}.
We work with the interaction-averaged field, compare truncated
discrete and continuum mild equations, and remove the truncation
using global continuity of the limiting solution.
Heat-kernel and resistance estimates give uniform spatial and
temporal control of the stochastic convolutions along the
joint scaling.
These estimates supplement strong semigroup convergence and
retain the diagonal deletion, whose effect must be controlled
after time acceleration.}

\subsection{Relation to previous work}
\label{relation-to-previous-work}

{Euclidean Ising--Kac limits for systems undergoing Glauber dynamics have had a long history.}
De Masi, Orlandi, Presutti, and Triolo~\cite{Kac_Glauber_Hydrodynamics_1994} derived a nonlocal mesoscopic evolution equation from Glauber--Kac dynamics; Theorem~\ref{thm:hydro-lim} is its heat-semigroup counterpart on the gasket.
Critical stochastic limits were established in one dimension by Bertini, Presutti, R\"udiger, and Saada~\cite{IsingKac1D} and by Fritz and R\"udiger~\cite{isingkacLine}, in two dimensions by Mourrat and Weber~\cite{IsingKac-Phi4-2D}, and in three dimensions by Grazieschi, Matetski, and Weber~\cite{3Dcase}.
In two and three dimensions, the limiting field is distribution-valued, and renormalization of the nonlinearity is reflected in the microscopic inverse-temperature tuning.
In terms of the regularity of the limiting field, our result is closer to the one-dimensional case.
The emphasis here is on defining Kac averaging through a prescribed diffusion and establishing the resulting hydrodynamic and critical stochastic limits on the gasket.

The absence of translation invariance does not by itself prevent the construction of local kernels that approximate a diffusion.
On smooth compact Riemannian manifolds, van Ginkel and Redig~\cite{vGR20} construct weighted graph approximations and prove a hydrodynamic limit for symmetric exclusion.
Their generator approximation uses kernels depending on geodesic distance and Taylor expansions in smooth coordinates to recover a constant multiple of the Laplace--Beltrami operator; see~\cite[Section~3.3, Proposition~3.12]{vGR20}.
{This was generalized to non-compact manifolds in \cite{JRR26+}.}
Our construction instead starts from a chosen diffusion and uses its heat semigroup as the averaging operator.
The relation between this average and the continuum generator is then supplied by the semigroup definition, as expressed in \eqref{eq:revised-intro-2}, rather than by a spatial Taylor expansion.
This provides a way to define Kac interactions without requiring a smooth-coordinate structure, while leaving the convergence of the corresponding microscopic dynamics to be established.

{Investigation of hydrodynamic limits and their fluctuations for particle systems on fractal spaces appears to be a much more recent phenomenon.} Jara~\cite{Jara09} derived a nonlinear diffusion equation from a zero-range process and, in~\cite{Jara2006}, obtained a generalized Ornstein--Uhlenbeck equilibrium fluctuation limit.
Chen and Gon\c{c}alves~\cite{CG-hydro} established hydrodynamic and equilibrium Ornstein--Uhlenbeck fluctuation limits for boundary-driven exclusion.
Van Meurs and Tsunoda~\cite{GlauberKawasaki} obtained a deterministic reaction--diffusion equation from Glauber--Kawasaki dynamics.
Our critical scaling instead retains a cubic drift together with nondegenerate stochastic forcing.
There is no Kawasaki exchange in our model: the diffusion term in the limit arises from the interaction average in the Glauber rates.
In dimension 1, a recent preprint by Zhang \cite{QZhang26} claims to have identified the stochastic Cahn-Hillard equation as the continuum fluctuation limit for the Ising-Kac model undergoing Kawasaki dynamics.

The Ising--$\Phi^4$ correspondence also has classical equilibrium formulations~\cite{GriffithsSimon1973,Aizenman1982}. 
In dimension 2, Hairer and Iberti \cite{HI18} showed, building on \cite{IsingKac-Phi4-2D}, that the Ising-Kac Gibbs measures scale to the $\Phi^4_2$ measure on the 2-torus.
Field-theoretic approaches on fractals include models with heat-kernel and hierarchical\ propagators~\cite{Eyink1989QuantumFM} and perturbative quantum field theory on random trees~\cite{QFTonCRT}.
These works provide a broader context for studying spin systems and continuum fields outside Euclidean geometry, but address different questions from the microscopic dynamical limit considered here.

{Continuum SPDE theory is needed to make sense of the limiting objects.} Hambly and Yang~\cite{HamblyYang2018} established existence and space--time regularity for stochastic heat equations on post-critically finite fractals; their Neumann regularity theorem supplies the continuous stochastic convolution used in this paper (see also recent works \cite{BCHOTW24,FSY26,BCCK26} for stochastic heat equations on general metric measure spaces).
Chen and Yang~\cite{CY26} developed solution theory for Wick-renormalized polynomial stochastic quantization equations on rough metric measure spaces under geometric and heat-kernel assumptions, and constructed invariant measures in their global-solution regime.
These works construct continuum fields or dynamics directly.
Our results instead identify a microscopic spin system and a joint critical scaling whose limiting observables are those of the nonlinear SPDE, with the diffusion, cubic coefficient, and noise intensity determined by the model.
\section{The Sierpiński gasket and heat-semigroup approximation}\label{sec:gasket}
We collect here the geometric and analytic facts about the Sierpiński gasket used in the sequel. Standard references are \cite{BarlowPerkins1988,Kigami2001,Strichartz2006}.

\subsection{Resistance geometry and harmonic extension}

Let $V_0=\{q_0,q_1,q_2\}$ be the vertices of the defining triangle and
$F_i(x)=(x+q_i)/2$. For a word $w$ of length $k$, write $K_w=F_wK$.
Then $m(K_w)=3^{-k}$ and $V_N=\bigcup_{|w|=N}F_wV_0$. Set
\[
 |V_N|=\frac{3^{N+1}+3}{2},\qquad
 \alpha=\frac{\log3}{\log5}<1.
\]
The canonical energy and its domain are
\[
 \mathcal E(f,f)=\lim_{N\to\infty}(5/3)^N
       \sum_{\{x,y\}\in E_N}(f(x)-f(y))^2,
 \qquad
 \mathcal F=\{f\in C(K):\mathcal E(f,f)<\infty\},
\]
where the limit is increasing and each undirected edge is counted once.
The associated resistance metric
\[
 R(x,y)=\sup_{\substack{f\in\mathcal F\\\mathcal E(f,f)>0}}
             \frac{|f(x)-f(y)|^2}{\mathcal E(f,f)}
\]
induces the usual topology of $K$.

For $g:V_N\to\mathbb R$, let $\iota_Ng$ be its energy-minimizing
harmonic extension to $K$, and identify $g$ with $\iota_Ng$ when no
confusion can arise. This map is linear, positive and constant-preserving;
on each level-$N$ cell its values are convex combinations of the three
boundary values. In particular,
\[
 \|\iota_Ng\|_\infty=\|g\|_{\infty,V_N},\qquad
 \|\iota_N(f|_{V_N})-f\|_\infty\longrightarrow0
 \quad(f\in C(K)).
\]
Harmonic orthogonality gives
$\mathcal E(\iota_Ng,v)=0$ whenever $v\in\mathcal F$ vanishes on $V_N$.
The trace identity and \eqref{eq:discrete-L} give, for
$\mathcal E_N(g,g)=-\langle g,L_Ng\rangle_{m_N}$ and its resistance metric $R_N$,
\begin{equation}\label{eq:gasket-energy-normalization}
 \mathcal E_N(g,g)=(1+3^{-N})^{-1}
                         \mathcal E(\iota_Ng,\iota_Ng),\qquad
 R_N(x,y)=(1+3^{-N})R(x,y),\quad x,y\in V_N.
\end{equation}

Assigning mass $3^{-N-1}$ to each cell--vertex incidence gives measures
converging weakly to $m$ and differing from $m_N$ in total variation by
$O(3^{-N})$. Thus $m_N\Rightarrow m$, uniformly over compact families
of test functions in $C(K)$ by finite nets.

Let $D=\operatorname{diam}_R(K)$. Self-similarity of the energy gives
$\operatorname{diam}_R(K_w)\le D(3/5)^k$ for $|w|=k$. Since there are
$3^k$ such cells, both $K$ and $(V_N,R_N)$ can be covered by at most
$C\rho^{-\alpha/(1-\alpha)}$ resistance balls of radius $\rho$, uniformly in $N$,
for $0<\rho\le1$. The resistance-volume estimates
\cite{Kigami2001,Strichartz2006} and their discrete counterparts are
\begin{equation}\label{eq:gasket-volume}
 m(B_R(x,\rho))\asymp\rho^{\alpha/(1-\alpha)},\qquad
 m_N(B_{R_N}(x,\rho))\ge c\rho^{\alpha/(1-\alpha)},
 \qquad 0<\rho\le\rho_0,
\end{equation}
uniformly in the centers and $N$. For the discrete bound, choose $k$
with $2D(3/5)^k\le\rho<2D(3/5)^{k-1}$. If $k\le N$, a level-$k$
cell containing $x$ lies in the closed ball and has graph mass
$|V_{N-k}|/|V_N|\ge c3^{-k}$; if $k>N$, the mass of $\{x\}$ suffices.
Testing the resistance formula with $\iota_N\mathbf1_{\{x\}}$ gives
\[
 R(x,y)\ge c(3/5)^N,
 \qquad x,y\in V_N,\quad x\ne y.
\]

\subsection{Heat kernels and approximation}

The form $(\mathcal E,\mathcal F)$ on $L^2(K,m)$ defines the
conservative symmetric Neumann semigroup $P_t=e^{tL}$, which is a
strongly continuous positive contraction semigroup on $C(K)$.
Its kernel $p_t(x,y)$ is jointly continuous on $(0,\infty)\times K^2$.
The classical continuum estimate \cite{BarlowPerkins1988,Kigami2001}
and its discrete counterpart read
\begin{equation}\label{eq:discrete-diagonal-bound}
 \sup_{N\ge0}\sup_{x,y\in V_N}p_t^N(x,y)
 +\sup_{x,y\in K}p_t(x,y)\le C(1+t^{-\alpha}),\qquad t>0.
\end{equation}
For the discrete bound, averaging the resistance inequality over
$B_{R_N}(x,\rho)$ gives
$|g(x)|^2\le C\rho^{-\alpha/(1-\alpha)}\|g\|_2^2+2\rho\mathcal E_N(g,g)$.
Since $\mathcal E_N(P_{t/2}^Nf,P_{t/2}^Nf)\le(et)^{-1}\|f\|_2^2$,
\[
 p_t^N(x,x)=\sup_{\|f\|_2=1}|P_{t/2}^Nf(x)|^2
 \le C\left(\rho^{-\alpha/(1-\alpha)}+\frac\rho t\right)
 \le C(1+t^{-\alpha}),
\]
where $\|\cdot\|_2=\|\cdot\|_{L^2(m_N)}$ and
$\rho=\min\{\rho_0,t^{1-\alpha}\}$. Cauchy--Schwarz and the semigroup
identity give the off-diagonal bound.

In particular, symmetry and the semigroup identity give
\[
 \int_0^T\|p_s^N(x,\cdot)\|_{L^2(m_N)}^2\,ds
 =\int_0^T p_{2s}^N(x,x)\,ds
 \le C\bigl(T+T^{1-\alpha}\bigr),
\]
uniformly in $N,x$, and the same estimate holds for $p_s$ and $m$.

We also use the generator bound
\begin{equation}\label{eq:gasket-generator-bound}
 \|L_N(\varphi|_{V_N})\|_\infty\le\|L\varphi\|_\infty,
 \qquad \varphi\in D_C(L).
\end{equation}
Indeed, for $h_{N,x}=\iota_N\mathbf1_{\{x\}}$, harmonic orthogonality
and the weak Neumann identity give
\[
 L_N(\varphi|_{V_N})(x)
 =\frac32\,3^N\int_K(L\varphi)h_{N,x}\,dm.
\]
Each incident cell contributes $3^{-N-1}$ to the integral of $h_{N,x}$,
and at most two cells meet at a vertex. Positivity of $h_{N,x}$ proves
\eqref{eq:gasket-generator-bound}.

\begin{proposition}\label{prop:gasket-approximation}
For $f\in C(K)$ and $T<\infty$,
\begin{equation}\label{eq:gasket-semigroup-convergence}
 \sup_{0\le t\le T}
 \|\iota_NP_t^N(f|_{V_N})-P_tf\|_\infty\longrightarrow0.
\end{equation}
The convergence is uniform for $f$ in compact subsets of $C(K)$.
Moreover, for $0<\delta<T<\infty$,
\begin{equation}\label{eq:gasket-kernel-convergence}
 \sup_{\delta\le t\le T}\sup_{x,y\in V_N}
 |p_t^N(x,y)-p_t(x,y)|\longrightarrow0.
\end{equation}
\end{proposition}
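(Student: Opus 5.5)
The plan is to establish semigroup convergence \eqref{eq:gasket-semigroup-convergence} through a Trotter--Kato type argument on $C(K)$, after transporting the discrete semigroups to $C(K)$ via $\iota_N$. Kernel convergence \eqref{eq:gasket-kernel-convergence} will then follow from equicontinuity of the kernels together with Arzel\`a--Ascoli.

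\textbf{Step 1: the generator-domain core.} Take $\varphi\in D_C(L)$ and set $\varphi_N=\varphi|_{V_N}$. By Duhamel,
\[
P_t^N\varphi_N-(P_t\varphi)|_{V_N}=\int_0^t P^N_{t-s}\bigl[L_N(P_s\varphi)|_{V_N}-(LP_s\varphi)|_{V_N}\bigr]\,ds .
\]
Since $P^N$ is a sup-norm contraction, it suffices to show $\|L_N(\psi|_{V_N})-(L\psi)|_{V_N}\|_\infty\to0$ uniformly for $\psi$ in the compact set $\{P_s\varphi:s\le T\}\subset D_C(L)$. Here the graph of $L$ is compact because $LP_s\varphi=P_sL\varphi$.

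For this convergence I would use the identity already derived in the text,
\[
L_N(\psi|_{V_N})(x)=\tfrac32 3^N\int_K (L\psi)\,h_{N,x}\,dm .
\]
The weights $\tfrac32 3^N h_{N,x}\,dm$ are probability measures: each vertex meets two cells, except the three boundary vertices, which meet one. They are supported in the union of the level-$N$ cells containing $x$, and the diameter of that union tends to $0$. Uniform continuity of $L\psi$ then gives uniform convergence. The boundary vertices need separate care, since their mass is $\tfrac12$ rather than $1$.

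At $q_i$ I would instead use the Neumann condition. The paper's identity implicitly includes a boundary term, namely the normal derivative, and that term vanishes for $\psi\in D_C(L)$. Granting this, the total mass at $q_i$ must be rechecked. If it equals $\tfrac12$, the constant normalization in \eqref{eq:discrete-L} at the boundary has to be reconciled with the identity, so I expect a correction of lower order that still vanishes in the limit. This boundary bookkeeping is the main obstacle, and I would first try to resolve it by comparing directly with the continuum weak Neumann identity at $q_i$.

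Finally, since $\|\iota_N g\|_\infty=\|g\|_\infty$ and $\iota_N(f|_{V_N})\to f$ uniformly on compacts, transferring the estimate to $K$ costs $o(1)$.

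\textbf{Step 2: extension to all of $C(K)$.} $D_C(L)$ is dense in $C(K)$, and every map involved is a contraction. An $\varepsilon/3$ argument therefore extends the convergence to all $f\in C(K)$, uniformly for $t\le T$. Uniformity over compact sets of $f$ follows from finite nets combined with the same contraction property.

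\textbf{Step 3: kernel convergence.} Fix $0<\delta\le t\le T$. Equicontinuity of $p^N_t(x,\cdot)$ in the resistance metric comes from a Sobolev inequality. The semigroup gives $p_t^N(x,y)=\langle p^N_{t/2}(x,\cdot),p^N_{t/2}(y,\cdot)\rangle$, so the increment is controlled by
\[
|g(y)-g(y')|^2\le R_N(y,y')\,\mathcal E_N(g,g),
\]
applied to $g=P^N_{t/4}p^N_{t/4}(x,\cdot)$. The spectral bound $\mathcal E_N(P_sg,P_sg)\le(es)^{-1}\|g\|_2^2$ and the diagonal bound \eqref{eq:discrete-diagonal-bound} then yield
\[
|p^N_t(x,y)-p^N_t(x,y')|\le C_\delta\,R(y,y')^{1/2},
\]
uniformly in $N$, with the $(1+3^{-N})$ factor from \eqref{eq:gasket-energy-normalization}. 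Time equicontinuity on $[\delta,T]$ follows similarly from $\|L_NP^N_{s}\|_{2\to2}\le (es)^{-1}$.

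With these estimates, the harmonic extensions $\iota_N\otimes\iota_N\,p^N$ are precompact in $C([\delta,T]\times K^2)$. To identify any limit point $q$, test \eqref{eq:gasket-semigroup-convergence} against $f$ and use $m_N\Rightarrow m$, which gives $\int q_t(x,y)f(y)\,m(dy)=P_tf(x)$. Hence $q=p$, and the full sequence converges uniformly on $V_N\times V_N$.
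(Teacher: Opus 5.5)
Your Step 1 has a genuine gap at exactly the point you flagged, and the resolution you suggest goes in the wrong direction. The identity $L_N(\psi|_{V_N})(x)=\frac32 3^N\int_K(L\psi)h_{N,x}\,dm$ is exact at every vertex, including $q_0,q_1,q_2$. It follows from the weak Neumann identity $\mathcal E(\psi,v)=-\int_K(L\psi)v\,dm$, valid for all $v\in\mathcal F$, together with harmonic orthogonality. So no normal-derivative term is hidden, and the Neumann condition offers no further cancellation. The function $h_{N,q_i}$ lives on a single level-$N$ cell and integrates to $3^{-N-1}$ there, so the weight at $q_i$ has mass exactly $\tfrac12$. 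Hence $L_N(\psi|_{V_N})(q_i)\to\tfrac12 L\psi(q_i)$, and $L\psi(q_i)\neq0$ in general, since $L$ maps $D_C(L)$ onto the mean-zero functions in $C(K)$. (The defect arises because $m_N$ gives $q_i$ roughly twice its cell-incidence weight $3^{-N-1}$.) Thus $\|L_N(\psi|_{V_N})-(L\psi)|_{V_N}\|_\infty\not\to0$: the error is of order one, not of lower order. Your reduction ``$P^N$ is a sup-norm contraction, so uniform generator consistency suffices'' therefore cannot be completed, because contractivity alone leaves an $O(t)$ Duhamel error.

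The missing observation is that the Duhamel integral does not feel this defect, because the walk spends little time on $V_0$. Split the integrand as $e_s=e_s\mathbf 1_{V_N\setminus V_0}+e_s\mathbf 1_{V_0}$. The first piece tends to zero uniformly in $s\le T$ by your argument: $\{P_sL\varphi:s\le T\}$ is compact, hence equicontinuous, and the supports of the weights shrink. The second piece is bounded by $2\|L\varphi\|_\infty\mathbf 1_{V_0}$. Positivity of $P^N$ and \eqref{eq:discrete-diagonal-bound} then give
\[
 \sup_{x\in V_N}\int_0^tP_{t-s}^N\mathbf 1_{\{q_i\}}(x)\,ds
 =\sup_{x\in V_N}\frac1{|V_N|}\int_0^tp_r^N(x,q_i)\,dr
 \le\frac{C(t+t^{1-\alpha})}{|V_N|}\longrightarrow0,
\]
using $\alpha<1$. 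With this addition Step 1 holds. Steps 2 and 3 are sound: equicontinuity comes from the resistance inequality, the energy decay and the on-diagonal bound, and identification of limit points goes through the semigroup. Your repaired route is then a self-contained alternative to the paper's. The paper instead imports kernel convergence from \cite[Theorem~1.3(i)]{Oishi2026} and integrates kernels to get \eqref{eq:gasket-semigroup-convergence} on $[\delta,T]$. It treats $[0,\delta]$ only through the bound \eqref{eq:gasket-generator-bound}, for which boundary mass $\tfrac12\le1$ is harmless, so it never needs generator consistency on $V_0$.
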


\begin{proof}
By \eqref{eq:gasket-energy-normalization}, the generator
$(1+3^{-N})L_N$ has the trace resistance $R|_{V_N\times V_N}$ and
speed measure $m_N$. Since $V_N\to K$ in resistance Hausdorff distance
and $m_N\Rightarrow m$, the volume estimate \eqref{eq:gasket-volume}
allows us to apply \cite[Theorem~1.3(i)]{Oishi2026} in the common
space $(K,R)$, with $s_0=s_1=\alpha/(1-\alpha)$ and $\kappa=1/2$.
This gives uniform kernel convergence on compact time intervals in
$(0,\infty)$; the time factor $1+3^{-N}\to1$ is absorbed by uniform
continuity of $p_t$. Thus \eqref{eq:gasket-kernel-convergence} holds.

Integrating the kernels against $f\,dm_N$, using $m_N\Rightarrow m$
uniformly on compact families and harmonic extension, gives
\eqref{eq:gasket-semigroup-convergence} on $[\delta,T]$.
For $\varphi\in D_C(L)$, \eqref{eq:gasket-generator-bound} gives
\[
 \sup_{0\le t\le\delta}
 \|P_t^N(\varphi|_{V_N})-\varphi|_{V_N}\|_\infty
 \le\delta\|L\varphi\|_\infty.
\]
Strong continuity of $P_t$, density of $D_C(L)$ in $C(K)$, and
contractivity extend the convergence to $[0,T]$ for every $f\in C(K)$.
Finite nets give uniformity on compact families of initial functions.
\end{proof}

\section{Fixed-heat-time hydrodynamic limit}\label{sec:hydro proof}
By the It\^o-Dynkin formula \cite[Appendix A, Lemma 1.5.1]{KipnisLandim1999}, we have for any $\varphi\in C(K)$ \[\braket{\pi^N_t,\varphi}=\braket{\pi^N_0,\varphi}+\int_0^t b^{N,\varphi}_s ds+M^{N,\varphi}_t,\]
where $M^{N,\varphi}$ is a martingale and \[b^{N,\varphi}_t:=\mathcal G_{N,\eta,\beta}\langle \pi^N,\varphi\rangle(\sigma_t^N)=\frac1{|V_N|}\sum_{x\in V_N}\varphi(x)\left[-\sigma_t^N(x)+\tanh\bigl(\beta h_{N,\eta}(\sigma_t^N,x)\bigr)\right].\]

\begin{lemma}\label{lem: hydro tight}
$\sup_{0\le t\le T}|M_t^{N,\varphi}|$ converges to zero in probability as $N\rightarrow\infty$.
Also, for any $T>0$, the laws of
$(\pi_t^N)_{0\le t\le T}$, $N\ge1$, form a tight family on
$D([0,T];\mathcal M(K))$.
\end{lemma}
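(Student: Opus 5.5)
The plan has two parts: show the martingale vanishes, then get tightness from uniformly bounded drift plus Aldous' criterion.

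\textbf{Martingale term.} For a pure-jump Markov process, the predictable quadratic variation of $M^{N,\varphi}$ is
\[
\langle M^{N,\varphi}\rangle_t=\int_0^t\sum_{x\in V_N}c_{N,\eta,\beta}(\sigma_s^N,x)\bigl(\langle\pi^N,\varphi\rangle(\sigma_s^{N,x})-\langle\pi^N,\varphi\rangle(\sigma_s^N)\bigr)^2\,ds .
\]
A flip at $x$ changes $\langle\pi^N,\varphi\rangle$ by $-2\varphi(x)\sigma(x)/|V_N|$, and $0\le c_{N,\eta,\beta}\le1$. Hence
\[
\langle M^{N,\varphi}\rangle_T\le 4T\|\varphi\|_\infty^2/|V_N|,
\]
deterministically. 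Doob's $L^2$ maximal inequality then gives $\mathbb E\sup_{t\le T}|M_t^{N,\varphi}|^2\le 16T\|\varphi\|_\infty^2/|V_N|\to0$, which proves the first claim. No property of the heat kernel is needed here beyond the rates being bounded.

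\textbf{Tightness.} The measures $\pi_t^N$ have total variation at most $1$. So they lie in the set $\{\mu:|\mu|(K)\le1\}$ of signed measures, which is compact and metrizable under the weak topology because $K$ is compact. Following \cite{KipnisLandim1999}, tightness in $D([0,T];\mathcal M(K))$ therefore reduces to tightness of the real processes $\langle\pi^N_\cdot,\varphi\rangle$ for $\varphi$ in a countable dense subset of $C(K)$. For a single $\varphi$, $|\langle\pi^N_t,\varphi\rangle|\le\|\varphi\|_\infty$, so compact containment is automatic. I would then apply Aldous' criterion. Take stopping times $\tau\le T$ and $0\le\gamma\le\delta$. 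The drift satisfies $|b^{N,\varphi}_s|\le 2\|\varphi\|_\infty$, since $|{-\sigma}+\tanh(\cdot)|\le2$. This gives
\[
|\langle\pi^N_{\tau+\gamma},\varphi\rangle-\langle\pi^N_\tau,\varphi\rangle|\le 2\delta\|\varphi\|_\infty+|M^{N,\varphi}_{\tau+\gamma}-M^{N,\varphi}_\tau|.
\]
The second term is bounded by $2\sup_{t\le T+1}|M_t^{N,\varphi}|$, which tends to $0$ in probability by the first part. Consequently
\[
\lim_{\delta\downarrow0}\limsup_N\sup_{\tau,\gamma\le\delta}\mathbb P(|\cdot|>\varepsilon)=0.
\]
Alternatively, the same bounds give the modulus-of-continuity condition directly, with C-tightness, since jumps have size $2\|\varphi\|_\infty/|V_N|$.

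\textbf{Main obstacle.} The only delicate point is the passage from tightness of countably many real projections to tightness in the measure-valued Skorokhod space. This is a standard step (Mitoma-type or \cite[Chapter 4, Proposition 1.7]{KipnisLandim1999}), made applicable by the uniform mass bound and compactness of $K$. I would invoke it rather than reprove it. Taking a countable dense family $\{\varphi_j\}$ also handles the metrization of the weak topology.
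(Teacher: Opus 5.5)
Your proposal is correct and follows essentially the same route as the paper. You use the carr\'e-du-champ bound $\langle M^{N,\varphi}\rangle_T\le 4T\|\varphi\|_\infty^2/|V_N|$ with Doob's inequality for the martingale, and the drift bound $|b^{N,\varphi}_s|\le 2\|\varphi\|_\infty$ together with the vanishing martingale for tightness. The only difference is that the paper skips Aldous' criterion and directly bounds $\mathbb E\sup_{|t-s|\le\delta}d_B(\pi^N_t,\pi^N_s)\le 2\delta+C_T|V_N|^{-1/2}$ in a countable-test-function metric $d_B$ on the unit ball, which is exactly the $C$-tightness alternative you mention.
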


\begin{proof}
    We will show that for any $\varphi\in C(K)$, the sequence of real-valued processes \((\braket{\pi^N_t,\varphi})_{N\ge 1}\) is tight in $\mathcal{D}([0,T];\mathbb{R})$. This implies the second desired result by \cite[Chapter 4, Proposition 1.7]{KipnisLandim1999} (see also \cite[Section 6.1]{CG-hydro}). 
    We claim that \begin{equation}\label{est:hydro mart BV}
        \frac{d}{dt}\braket{M^{N,\varphi}}_t\le 4\frac{\norm{\varphi}_\infty^2}{|V_N|}
    \end{equation} and \begin{equation}\label{est: hydro dift bdd}
        |b^{N,\varphi}_t|\le 2\norm{\varphi}_\infty.
    \end{equation}
    For a pure jump process, we have the carr\'e-du-champ identity \begin{align*}
    \frac{d}{dt}\braket{M^{N,\varphi}}_t=&\mathcal G_{N,\eta,\beta}(F^2)-2F\mathcal G_{N,\eta,\beta}(F)\\
    =& \sum_{x\in V_N}c_{N,\eta,\beta}(\sigma^N_t,x)[F((\sigma^N_t)^x)-F(\sigma^N_t)]^2
    \end{align*}
    with \(F(\sigma^N_t)=\braket{\pi^N_t,\varphi}\), which gives us \[F(\sigma^x)-F(\sigma)=-\frac{2}{|V_N|}\varphi(x)\sigma(x)\implies [F(\sigma^x)-F(\sigma)]^2=\frac{4\varphi(x)^2}{|V_N|^2}.\]
    Therefore, we have \[\frac{d}{dt}\braket{M^{N,\varphi}}_t=\sum_{x\in V_N}c_N(\sigma^N_t,x)\frac{4\varphi(x)^2}{|V_N|^2}\le \frac{4}{|V_N|^2}\sum_{x\in V_N}\varphi(x)^2\le \frac{4\norm{\varphi}^2_\infty}{|V_N|},\] which proves \eqref{est:hydro mart BV}.\\
    Since $0\le \norm{\tanh}_{L^\infty(\mathbb{R})} \le 1$, we have $|-\sigma^N_t(x)+\tanh(\cdots)|\le 2$, giving us directly \begin{align*}
    |b^{N,\varphi}_t|
    \le \frac{2}{|V_N|}\sum_{x\in V_N}|\varphi(x)|\le 2\norm{\varphi}_\infty,
    \end{align*} which is \eqref{est: hydro dift bdd}.\\
    By \eqref{est:hydro mart BV}, \[\langle M^{N,\varphi}\rangle_T=\int_0^T\frac{d}{dt}\langle M^{N,\varphi}\rangle_t\,dt\le\frac{4T\|\varphi\|_\infty^2}{|V_N|},\]
    which implies \[\mathbb E |M_T^{N,\varphi}|^2=\mathbb E \langle M^{N,\varphi}\rangle_T\le\frac{4T\|\varphi\|_\infty^2}{|V_N|}.\]
    Doob's martingale inequality then gives us \[\mathbb E\sup_{0\le t\le T}|M_t^{N,\varphi}|^2\le4\mathbb E |M_T^{N,\varphi}|^2\le\frac{CT\|\varphi\|_\infty^2}{|V_N|}.\]
    Thus, for every $\varepsilon>0$, we have
    \[\mathbb P\left(\sup_{0\le t\le T}|M_t^{N,\varphi}|>\varepsilon\right)\le\frac{1}{\varepsilon^2}\mathbb E\sup_{0\le t\le T}|M_t^{N,\varphi}|^2\le\frac{CT\|\varphi\|_\infty^2}{\varepsilon^2 |V_N|}\stackrel{N\uparrow\infty}{\longrightarrow}
    0.\]
    This implies the desired claim on $M$. \begin{equation}\label{lim:hydro mart lim 0}
        \sup_{0\le t\le T}|M_t^{N,\varphi}|\xrightarrow[N\uparrow\infty]{\mathbb P}0.
    \end{equation}

    We now handle the drift term, for which we introduce the notation \(A^{N,\varphi}_t:=\int_0^t b^{N,\varphi}_sds\). By \eqref{est: hydro dift bdd}, we have for \(0\le s<t\le T\)\begin{equation}\label{est:hydro drift int lip}|A_t^{N,\varphi}-A_s^{N,\varphi}|=\left|\int_s^t b_r^{N,\varphi}\,dr\right|\le\int_s^t |b_r^{N,\varphi}|\,dr\le2\|\varphi\|_\infty |t-s|.
    \end{equation} {Set \( B=\{\nu\in\mathcal M(K):\|\nu\|_{\mathrm{TV}}\le1\},\) equipped with the weak-* topology. Since $K$ is compact metrizable, $B$ is compact metrizable. Choose a countable dense family $(\varphi_j)_{j\ge1}$ in the unit ball of $C(K)$ and define
\[d_B(\mu,\nu)=\sum_{j=1}^{\infty}2^{-j}\bigl(|\langle\mu-\nu,\varphi_j\rangle|\wedge1\bigr).\]
This metric generates the weak-* topology on $B$.
By \eqref{lim:hydro mart lim 0} and \eqref{est:hydro drift int lip}, for every $\delta>0$,
\[
\begin{aligned}
&\mathbb E\sup_{\substack{s,t\le T\\|t-s|\le\delta}}
d_B(\pi_t^N,\pi_s^N)\\
&\qquad\le
2\delta+
2\sum_{j=1}^{\infty}2^{-j}
\mathbb E\sup_{t\le T}|M_t^{N,\varphi_j}|\\
&\qquad\le 2\delta+C_T|V_N|^{-1/2}.
\end{aligned}
\]
Here the last inequality follows from \eqref{est:hydro mart BV} and Doob's
inequality, uniformly in $j$.

Since $\pi_t^N\in B$ for every $t$, this proves
tightness in $D([0,T];B)$. Moreover, letting first
$N\to\infty$ and then $\delta\downarrow0$ shows that
every subsequential limit is supported on $C([0,T];B)$.
Thus the sequence is $C$-tight.}
\end{proof}

We now establish the well-posedness of \eqref{eq:nonlocal-hydrodynamic} by a routine fixed point argument.

\begin{proposition}\label{prop: hydro lim WP}
    A function $u\in C([0,T],C(K))$ is a mild solution to \eqref{eq:nonlocal-hydrodynamic} with initial condition $u_0\in C(K)$ if it satisfies the point-wise mild formulation \begin{equation}\label{eq:hydro nonloc mild}
        u_t(r)=u_0(r)+\int_0^t -u_s(r)+\tanh(\beta P_\eta u_s(r))ds
    \end{equation} for every $t\in [0,T],r\in K$. For every $u_0\in C(K)$ and any $T>0$, there exists a unique mild solution $u$ to \eqref{eq:nonlocal-hydrodynamic}.
\end{proposition}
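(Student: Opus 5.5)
The plan is to treat \eqref{eq:hydro nonloc mild} as an ODE in the Banach space $C(K)$ with a globally Lipschitz vector field, and apply Picard iteration. Define
\[
F(u):=-u+\tanh(\beta P_\eta u),\qquad u\in C(K).
\]
The first step is to check that $F$ maps $C(K)$ into itself. This holds because $P_\eta$ is a positive contraction semigroup on $C(K)$, as recalled in Section~\ref{sec:gasket}, and $\tanh$ is continuous. The second step is the global Lipschitz bound. Since $|\tanh'|\le1$ and $\|P_\eta f\|_\infty\le\|f\|_\infty$, we get
\[
\|F(u)-F(v)\|_\infty\le(1+\beta)\|u-v\|_\infty .
\]
This is the only estimate needed, and it is uniform in $u,v$. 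No a priori bound on the solution is therefore required, which is why global existence comes for free.

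Next, fix $T>0$ and work on the space $X_T=C([0,T];C(K))$. Define the map
\[
(\Gamma u)_t=u_0+\int_0^tF(u_s)\,ds,
\]
where the integral is a Riemann integral in $C(K)$; it exists because $s\mapsto F(u_s)$ is continuous. Evaluated at each $r\in K$, this integral agrees with the pointwise integral in \eqref{eq:hydro nonloc mild}, since point evaluation is a continuous linear functional. So a fixed point of $\Gamma$ is exactly a mild solution in the sense of the proposition. Conversely, any mild solution $u\in X_T$ satisfies $u=\Gamma u$: the pointwise identity holds for every $r$, and both sides are elements of $C(K)$.

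To get a contraction on all of $[0,T]$ at once, I would equip $X_T$ with the weighted norm
\[
\|u\|_\lambda=\sup_{t\le T}e^{-\lambda t}\|u_t\|_\infty,\qquad \lambda=2(1+\beta).
\]
With this choice,
\[
\|\Gamma u-\Gamma v\|_\lambda\le\frac{1+\beta}{\lambda}\,\|u-v\|_\lambda=\tfrac12\,\|u-v\|_\lambda .
\]
The Banach fixed point theorem then gives a unique fixed point in $X_T$. Uniqueness among all mild solutions follows, since every mild solution is a fixed point. Alternatively, uniqueness follows from Grönwall applied to $\|u_t-v_t\|_\infty\le(1+\beta)\int_0^t\|u_s-v_s\|_\infty\,ds$.

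Finally, uniqueness makes the solutions on $[0,T]$ and $[0,T']$ agree on the overlap, so they patch together into a global solution. Because $s\mapsto F(u_s)$ is continuous, the fundamental theorem of calculus in $C(K)$ shows $u\in C^1([0,\infty);C(K))$, matching Theorem~\ref{thm:hydro-lim}. The only point needing care is that pointwise mild solutions coincide with $C(K)$-valued integral solutions; I expect this to be immediate from continuity of $u$ and of point evaluations, and the rest is routine.
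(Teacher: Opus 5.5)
Your proposal is correct and follows essentially the same route as the paper: the global Lipschitz bound $\|F(u)-F(v)\|_\infty\le(1+\beta)\|u-v\|_\infty$ (from $P_\eta$ being a contraction and $\tanh(\beta\,\cdot)$ being $\beta$-Lipschitz), a Banach fixed-point argument in $C([0,T];C(K))$, and Gr\"onwall for uniqueness. The only difference is cosmetic: you use the weighted norm $\sup_{t\le T}e^{-\lambda t}\|u_t\|_\infty$ to get a contraction on all of $[0,T]$ at once, whereas the paper contracts on intervals of length less than $(1+\beta)^{-1}$ and concatenates.
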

\begin{proof}
    For shorthand, we define \(F(u_t(r)):=-u_t(r)+\tanh(\beta P_\eta u_t(r))\), so \eqref{eq:hydro nonloc mild} can be rewritten as \begin{equation}\label{eq:hydro mild pt F}
        u_t(r)=u_0(r)+\int_0^t F(u_s(r))ds.
    \end{equation}
    We will first show that a fixed point for the Picard iteration \[u^0_t=u_0,\quad u^{n}_t={u^{0}_t}+\int_0^tF(u^{n-1}_s)ds\]exists in $C([0,T],C(K))$ for sufficiently small $T>0$. 
    Since $|\tanh(\beta x)|\le \beta|x|$, we have {\[\norm{F(u_1)-F(u_2)}_\infty\le \norm{u_1-u_2}_\infty+\beta \norm{P_\eta u_1-P_\eta u_2}\le (1+\beta)\norm{u_1-u_2}_\infty,\]}where in the last step we used $P_\eta$ is an $L^\infty(K)$ contraction. 
    Taking another $L^\infty$ norm in $t$, we see that the map $u(\circ,\cdot)\to \int_0^\circ F(u(s,\cdot))ds$ is a contraction
    on $C([0,T];C(K))$ for $T<(1+\beta)^{-1}$, giving us the desired fixed point. 
    To obtain a solution for $t\in [0,T]$ for arbitrarily large $T$, simply partition $[0,T]$ into intervals of maximal size $<(1+\beta)^{-1}$ and re-run the construction over each sub-interval.\\
    We now show uniqueness. Let $u,v$ be two weak solutions of \eqref{eq:hydro nonloc mild} with initial condition $u_0\in C(K)$ in $C([0,T],C(K))$and set $\delta_t(r):=u_t(r)-v_t(r)$. By direct computation, we have
    \[\delta_t=\int_0^t-\delta_s +\tanh(\beta P_\eta u_s)-\tanh(\beta P_\eta v_s) ds,\]
    which implies \[\norm{\delta_t}_\infty\le \int_0^t \norm{\delta_s}_\infty +\norm{\tanh(\beta P_\eta u_s)-\tanh(\beta P_\eta v_s)}_\infty ds.\]
    Since $x\mapsto\tanh(\beta x)$ is $\beta-$Lipschitz and $P_\eta$ is an $L^\infty(K)$ contraction, we have for all $r\in K$ \begin{align*}
        &|\tanh(\beta P_\eta u_s)-\tanh(\beta P_\eta v_s)|\\
        \le& \beta|P_\eta u_s-P_\eta v_s|\leq \beta P_\eta \norm{\delta_s}_\infty\le \beta \norm{\delta_s}_\infty.
    \end{align*}
    This gives us \[\norm{\delta_t}_\infty\le \int_0^t (1+\beta)\norm{\delta_s}_\infty ds,\] which implies by Gronwall's inequality \(\norm{\delta_t}_\infty=0\).
    Therefore $u= v$ in $C([0,T],C(K))$, as desired.
\end{proof}

\begin{proposition}\label{prop:hydro drift conv}
    Fix \(T>0\) and suppose the initial conditions satisfy the assumptions of Theorem \ref{thm:hydro-lim}. Let \(B=\{\nu\in\mathcal M(K):\|\nu\|_{\mathrm{TV}}\le1\},\) equipped with the weak-\(*\) topology. Every subsequential limit in law of \((\pi_t^N)_{0\le t\le T}\) in \(D([0,T];B)\) is supported on \(C([0,T];B)\) and coincides almost surely with the deterministic path
    $$\pi_t=u_tm,\qquad 0\le t\le T,$$where \(u\) is the unique solution of the nonlocal equation \eqref{eq:nonlocal-hydrodynamic} with initial condition \(u_0\).
\end{proposition}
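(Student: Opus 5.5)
Continuity of the limit paths is already given by the $C$-tightness in Lemma~\ref{lem: hydro tight}. We fix a subsequence along which $\pi^N\Rightarrow\pi$ in $D([0,T];B)$. By Skorokhod representation we may assume almost sure convergence. Since the limit is continuous, this convergence is uniform in $t\in[0,T]$ for each test function.

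\textbf{Step 1: absolute continuity of the limit.} For $\varphi\ge0$ we have $|\langle\pi_t^N,\varphi\rangle|\le\langle m_N,\varphi\rangle\to\int\varphi\,dm$. Hence $|\langle\pi_t,\varphi\rangle|\le\int|\varphi|\,dm$, so $\pi_t=v_tm$ with $\|v_t\|_{L^\infty(m)}\le1$. Measurability of $(t,x)\mapsto v_t(x)$ follows from the weak-$*$ continuity of $t\mapsto\pi_t$.

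\textbf{Step 2: closing the drift in terms of $\pi^N$.} Pass to the limit in the Itô–Dynkin decomposition, using $\sup_t|M_t^{N,\varphi}|\to0$ from Lemma~\ref{lem: hydro tight}. The key observation is that, for $x\in V_N$,
\[
h_{N,\eta}(\sigma_t^N,x)=\langle\pi_t^N,p_\eta^N(x,\cdot)\rangle-|V_N|^{-1}p_\eta^N(x,x)\sigma_t^N(x).
\]
The diagonal term is $O(|V_N|^{-1})$ uniformly, by \eqref{eq:discrete-diagonal-bound} at the fixed time $\eta$. By \eqref{eq:gasket-kernel-convergence}, $p_\eta^N(x,y)=p_\eta(x,y)+o(1)$ uniformly on $V_N\times V_N$. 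Therefore
\[
h_{N,\eta}(\sigma_t^N,x)=\langle\pi_t^N,p_\eta(x,\cdot)\rangle+o(1)
\]
uniformly in $x$, $t$ and configurations. The family $\{p_\eta(x,\cdot):x\in K\}$ is a compact subset of $C(K)$, since $p_\eta$ is jointly continuous on the compact set $K^2$. Weak-$*$ convergence on $B$ is uniform over such compact families, by an equicontinuity and finite-net argument. It follows that
\[
\sup_{t\le T}\sup_{x\in K}\bigl|\langle\pi_t^N,p_\eta(x,\cdot)\rangle-P_\eta v_t(x)\bigr|\to0
\quad\text{a.s.}
\]
Since $\tanh$ is $1$-Lipschitz, we get
\[
\frac1{|V_N|}\sum_x\varphi(x)\tanh(\beta h_{N,\eta}(\sigma_t^N,x))=\int\varphi\tanh(\beta P_\eta v_t)\,dm_N+o(1).
\]
The function $\varphi\tanh(\beta P_\eta v_t)$ is continuous, and $\{P_\eta v_t\}_t$ is equicontinuous because $\|v_t\|_\infty\le1$. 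Hence $m_N\Rightarrow m$, uniformly on compact families, replaces $m_N$ by $m$. The linear part $-\langle\pi_t^N,\varphi\rangle$ converges directly. Integrating in time by dominated convergence and using the initial assumption, we obtain
\[
\int\varphi v_t\,dm=\int\varphi u_0\,dm+\int_0^t\!\int\varphi\bigl(-v_s+\tanh(\beta P_\eta v_s)\bigr)\,dm\,ds.
\]
This holds for all $t$ and for $\varphi$ ranging over a countable dense set, hence for all $\varphi\in C(K)$.

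\textbf{Step 3: uniqueness.} The identity gives, $m$-a.e.,
\[
v_t=u_0+\int_0^t\bigl(-v_s+\tanh(\beta P_\eta v_s)\bigr)\,ds.
\]
Subtracting the mild equation \eqref{eq:hydro nonloc mild} for $u$ and using the $L^\infty$-contractivity of $P_\eta$ gives
\[
\|v_t-u_t\|_{L^\infty(m)}\le(1+\beta)\int_0^t\|v_s-u_s\|_{L^\infty(m)}\,ds.
\]
Gronwall's inequality, exactly as in Proposition~\ref{prop: hydro lim WP} but in $L^\infty(m)$, then yields $v=u$. So every subsequential limit equals $u_tm$ almost surely.

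\textbf{Main obstacle.} The delicate step is the uniform replacement in Step 2: $h_{N,\eta}$ must be replaced by $P_\eta v_t$ uniformly in $x$, and this requires the uniform kernel convergence \eqref{eq:gasket-kernel-convergence} together with compactness of the kernel family. Once this uniformity is in hand, Steps 1 and 3 are routine.
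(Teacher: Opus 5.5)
Your Steps 1 and 2 follow the paper's proof essentially line by line. Absolute continuity comes from $-m_N\le\pi_t^N\le m_N$. The replacement of $h_{N,\eta}$ by $P_\eta v_t$ uses the same three pieces: the diagonal weight, uniform kernel convergence, and a finite net over $\{p_\eta(x,\cdot):x\in K\}$. The uniform version of $m_N\Rightarrow m$ is also the same. You get compactness of $\{\varphi\tanh(\beta P_\eta v_t)\}_t$ from equicontinuity via $\|v_t\|_\infty\le1$, while the paper uses continuity of $t\mapsto P_\eta\pi_t$ in $C(K)$; either works.

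The genuine difference is how you identify the limit with $u$. The paper never proves uniqueness outside $C([0,T];C(K))$. It notes that the nonlinearity sees $\pi_s$ only through $P_\eta\pi_s$, which is jointly continuous in $(s,r)$, so the tested identity is a linear ODE in $t$. Variation of constants then exhibits the density as $e^{-t}u_0+\int_0^te^{-(t-s)}\tanh(\beta P_\eta\pi_s)\,ds$. This is an explicitly continuous solution of \eqref{eq:hydro nonloc mild}, so the uniqueness in Proposition~\ref{prop: hydro lim WP} applies. You instead run Gronwall in $L^\infty(m)$. This gives uniqueness among bounded measurable weak solutions and skips the regularity upgrade; continuity of the limit density then follows a posteriori from $v=u$.

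The cost is a measurability point. Writing the identity $m$-a.e. with $\int_0^t(\cdots)\,ds$ inside needs a jointly measurable version of $(s,x)\mapsto v_s(x)$, which you assert rather than construct. You can sidestep this by applying Gronwall to the tested identity itself. Subtracting the tested equation for $u$ gives $\bigl|\int\varphi(v_t-u_t)\,dm\bigr|\le(1+\beta)\|\varphi\|_{L^1(m)}\int_0^t\|v_s-u_s\|_{L^\infty(m)}\,ds$. Now take the supremum over a countable dense set of $\varphi\in C(K)$ with $\|\varphi\|_{L^1(m)}\le1$. This yields the Gronwall inequality, and it also shows that $t\mapsto\|v_t-u_t\|_{L^\infty(m)}$ is lower semicontinuous, hence measurable.
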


\begin{proof}
    Any limiting signed Radon measure $\pi_t$ being absolutely continuous with respect to $m$ follows from \[-m_N\le \pi^N_t\le m_N\] for all $N\ge 0, t\in \mathbb{R}_+$, where \(m_N:=|V_N|^{-1}\sum_{x\in V_N}\delta_{x}\) and the $\le$ is standard notation for comparison of signed measures, since passing to the limit yields \[-m\le \pi_t\le m.\]
    {By Lemma 1, the laws of the empirical-measure paths are $C$-tight in $D([0,T];B)$. Passing to a subsequence, let $\pi^N\Rightarrow\pi$, where $\pi\in C([0,T];B)$ almost surely.

By the Skorokhod representation theorem, we may work on a probability space on which this convergence holds almost surely. Since the limiting paths are continuous,
convergence in the $J_1$ topology implies
\[
\sup_{t\le T}d_B(\pi_t^N,\pi_t)\longrightarrow0
\qquad\text{almost surely}.
\]
Consequently, for every $\psi\in C(K)$,
\[
\sup_{t\le T}
|\langle\pi_t^N-\pi_t,\psi\rangle|
\longrightarrow0
\qquad\text{almost surely}.
\]}
    For any $t>0$, set $u_t:=\frac{d\pi_t}{dm}$, which by Radon-Nikodym we know must be $\le 1$.
    Since for all $\varphi\in C(K)$ we have $M^{N,\varphi}\stackrel{N\uparrow+\infty}{\longrightarrow}0$ over all subsequences by Lemma \ref{lem: hydro tight}, it remains to show that in any subsequence with limiting $m-$density $u_t$, we have as real-valued processes in $t$ \begin{equation}\label{lim:hydro drift conv}
        b^{N,\varphi}_t\stackrel{N\uparrow\infty}{\longrightarrow} \int_K \varphi(r)[-u_t(r)+\tanh(\beta P_\eta u_t(r))] m(dr),
    \end{equation} as this means the limiting density satisfies, for every $\varphi\in C(K)$,
    \[\int_K \varphi(r)u_t(r) m(dr)=\int_K u_0(r)\varphi(r)m(dr)+\int_0^t\int_K \varphi(r)[-u_s(r)+\tanh(\beta P_\eta u_s(r))] m(dr)ds,\] which is the weak formulation of \eqref{thm:hydro-lim}.\\
    Convergence of the initial conditions is assumed. Since $u_t$ is presumed to be the density of the subsequential limit, the linear term convergence is free, leaving us to show \[\left||V_N|^{-1}\sum_{x\in V_N}\varphi(x)\tanh(\beta h_{N,\eta}(\sigma^N_t,x))-\int_{K }\varphi(r) \tanh(\beta P_\eta u_t(r))m(dr)\right|\stackrel{N\uparrow\infty}{\longrightarrow}0.\]
    This can be further simplified elementarily: \begin{align*}
    &\left|\frac1{|V_N|}\sum_{x\in V_N}\varphi(x)\tanh\bigl(\beta h_{N,\eta}(\sigma_t^N,x)\bigr)-\int_{K}\varphi(r)\tanh(\beta P_\eta u_t(r))] m(dr)\right|\\
    \le &\left|\frac1{|V_N|}\sum_{x\in V_N}\varphi(x)\tanh\bigl(\beta h_{N,\eta}(\sigma_t^N,x)\bigr)-\frac{1}{|V_N|}\sum_{x\in V_N}\varphi(x)\tanh(\beta P_\eta u_t(x))]\right|\\
    &+\left|\frac1{|V_N|}\sum_{x\in V_N}\varphi(x)\tanh\bigl(\beta P_\eta u_t(x)\bigr)-\int_{K}\varphi(r)\tanh(\beta P_\eta u_t(r))] m(dr)\right|=:D_1^N+D_2^N.
\end{align*}
    {The convergence of $D_2^N$ to 0 is uniform in time. indeed, since $\pi\in C([0,T];B)$ and the heat kernel $p_\eta$ is jointly continuous, the map $t\mapsto P_\eta\pi_t$ is continuous into $C(K)$.
    Consequently,\[\left\{\varphi(\cdot)\tanh\bigl(\beta P_\eta\pi_t(\cdot)\bigr):0\le t\le T\right\}\]
is compact in $C(K)$. Weak convergence $m_N\Rightarrow m$ is uniform on compact families of continuous functions, and hence\[\sup_{t\le T}|D_2^N(t)|\longrightarrow0.\] }
    The problem is thus further reduced to showing \[D_1^N:=\left|\frac1{|V_N|}\sum_{x\in V_N}\varphi(x)\left[\tanh\bigl(\beta h_{N,\eta}(\sigma_t^N,x)\bigr)-\tanh(\beta P_\eta u_t(x))\right]\right|\stackrel{N\uparrow\infty}{\longrightarrow} 0.\]
    Since $z\mapsto \tanh(\beta z)$ is $\beta-$Lipschitz, we have \[\left|\tanh\bigl(\beta h_{N,\eta}(\sigma_t^N,x)\bigr)-\tanh(\beta P_\eta u_t(x))\right|\le \beta |h_{N,\eta}(\sigma_t^N,x)- P_\eta u_t(x)|.\]
    Thus, it remains to prove \begin{equation}\label{lim:hydro heat ker replacement}
        \sup_{0\le t\le T,x\in V_N}|h_{N,\eta}(\sigma_t^N,x)- P_\eta u_t(x)|\to 0.
    \end{equation}
    Recall by\eqref{eq:heat-kernel-Kac-field} that \(
    h_{N,\eta}(\sigma_t^N,x)=\left\langle\pi_t^N,p_\eta^N(x,\cdot)\right\rangle-\frac1{|V_N|}p_\eta^N(x,x)\sigma_t^N(x),\) so
    \begin{align*}
        \left|h_{N,\eta}(\sigma_t^N,x)-P_\eta u_t(x)\right|=&\left|\braket{\pi^N_t,p^N_\eta(x,\cdot)}-\braket{u_tm,p_\eta(x,\cdot)}-\frac{1}{|V_N|}p^N_\eta(x,x)\sigma^N_t(x)\right|\\
    &\le \left|\braket{\pi^N_t,p^N_\eta(x,\cdot)-p_\eta(x,\cdot)}\right|\\
    &+\left|\braket{\pi^N_t-u_tm,p_\eta(x,\cdot)}\right|+\frac1{|V_N|}p_\eta^N(x,x).
    \end{align*}
By \eqref{eq:discrete-diagonal-bound}, for fixed $\eta>0$,
\begin{equation}\label{conv:single site}
    \sup_{x\in V_N}\frac{p_\eta^N(x,x)}{|V_N|}
    \le\frac{C(1+\eta^{-\alpha})}{|V_N|}
    \stackrel{N\uparrow\infty}{\longrightarrow}0.
\end{equation}
    Denote the total variation norm on signed measures on $K$ by $\norm{\cdot}_{\mathrm{TV}}$. 
    Since\(\|\pi_t^N\|_{\mathrm{TV}}=1,\) we have
    \[\left|\braket{\pi^N_t,p^N_\eta(x,\cdot)-p_\eta(x,\cdot)}\right|\le\sup_{y\in V_N}\left|p_\eta^N(x,y)-p_\eta(x,y)\right|.\]
    The RHS quantity goes to zero by Proposition \ref{prop:gasket-approximation}, giving us uniformly in $t$    \begin{equation}\label{conv: heat ker diff pi}
    \sup_{x\in V_N}\braket{\pi_t^N,p_\eta^N(x,\cdot)-p_\eta(x,\cdot)}\stackrel{N\uparrow\infty}{\longrightarrow}0.\end{equation}

    For the final term, since \(\eta>0\) is fixed, the continuum heat kernel \(p_\eta(x,y)\) is jointly continuous on the compact set \(K\times K\). 
    Consequently, \(\mathcal K_\eta:=\{p_\eta(x,\cdot):x\in K\}\) is compact in \(C(K)\). 
    Let \(\varepsilon>0\). By compactness of $\mathcal K_\eta$, there exist \(x_1,\dots,x_M\in K\) such that for every \(x\in K\), some \(x_j\) satisfies\[\|p_\eta(x,\cdot)-p_\eta(x_j,\cdot)\|_\infty<\varepsilon.\]
    Since \(\|\pi_t^N-u_tm\|_{\mathrm{TV}}\le 2,\)
    we obtain{\[\sup_{0\le t\le T\\x\in K}|\braket{\pi^N_t-u_tm,p_\eta(x,\cdot)}|\le 2\varepsilon+\max_{1\le j\le M}\sup_{0\le t\le T}|\braket{\pi^N_t-u_tm,p_\eta(x_j,\cdot)}|.\]}
    The maximum on RHS tends to zero by the assumed weak convergence of $\pi^N_t$ to $\pi_t$ along this subsequence, since it contains only finitely many fixed continuous test functions. 
    Thus \[\limsup_{N\to\infty}\sup_{x\in K}\left|\left\langle\pi_t^N-u_tm,p_\eta(x,\cdot)\right\rangle\right|\le 2\varepsilon .\]
    Since \(\varepsilon>0\) is arbitrary, we have uniformly in $t$ \begin{equation}\label{conv:meas term}
    \sup_{x\in K} |\braket{\pi_t^N-u_tm,p_\eta(x,\cdot)}|\stackrel{N\uparrow\infty}{\longrightarrow}0.
    \end{equation}
    Since the results of \eqref{conv:single site}, \eqref{conv: heat ker diff pi}, and \eqref{conv:meas term} do not depend on $t$, we can combine them to obtain \eqref{lim:hydro heat ker replacement}, which concludes the convergence proof.\\
    {It remains to show that the limiting density satisfies $u\in C([0,T],C(K))$ for any $T>0$. For every $\varphi\in C(K)$, variation of constants of standard ODE theory gives us \[\braket{\pi_t, \varphi}=e^{-t}\braket{u_0m,\varphi}+\int_0^t e^{-(t-s)}\braket{\tanh(\beta P_\eta \pi_s)m,\varphi}ds,\]
    thus the density $u$ satisfies for all $t\in [0,T]$ \[u_t(r)=e^{-t}u_0(r)+\int_0^t e^{-(t-s)}\tanh(\beta P_\eta \pi_s(r)) ds.\]
    The desired conclusion follows from weak continuity of $\pi_s$, compactness of the family $\set{p_\eta(x,\cdot):x\in K}$ in $C(K)$ and the joint continuity of $p_\eta(\cdot,\cdot)$, since that shows the map $(s,r)\mapsto P_\eta \pi_s(r)$ is continuous.}
\end{proof}

Combining Lemma \ref{lem: hydro tight}, Proposition \ref{prop: hydro lim WP} and Proposition \ref{prop:hydro drift conv} immediately yields Theorem \ref{thm:hydro-lim}.

\section{The Allen-Cahn limit of the rescaled hydrodynamic equation}\label{sec:det AC lim proof}

We first show that the Allen-Cahn equation is also well posed.

\begin{proposition}\label{prop:Allen-Chan WP}
Let \(P_t=e^{tL}\) be a strongly continuous Markov semigroup on
\(C(K)\). For every \(X_0\in C(K)\), \eqref{eq:allen-cahn}
admits a unique global mild solution
\[
    X\in C([0,\infty);C(K)).
\]
Moreover, we have for all $t\ge 0$\begin{equation}\label{est:allen-cahn unif bdd}
    \norm{X(t,\cdot)}_\infty\leq \max\{\norm{X_0}_\infty,\sqrt{3A_+}\},
\end{equation} where \(A_+=\max\{A,0\}\).
\end{proposition}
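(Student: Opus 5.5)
Local existence and uniqueness come from a Picard iteration for the mild formulation
\[
X(t)=P_tX_0+\int_0^tP_{t-s}\Bigl(AX(s)-\tfrac13X(s)^3\Bigr)\,ds .
\]
Globality then follows from an a priori sup-norm bound, which we obtain by comparison with the spatially constant ODE. Throughout, we use only that $P_t$ is a strongly continuous, positive, conservative contraction on $C(K)$; this gives $P_t\mathbf 1=\mathbf 1$ and the order property $f\le g\Rightarrow P_tf\le P_tg$.

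\emph{Local well-posedness.} Fix $R\ge\|X_0\|_\infty$. The nonlinearity $F(x)=Ax-\frac13x^3$ is Lipschitz on $[-2R,2R]$ with constant $|A|+4R^2$. On the closed ball of radius $2R$ in $C([0,\tau];C(K))$, the map
\[
\Gamma X(t)=P_tX_0+\int_0^tP_{t-s}F(X(s))\,ds
\]
sends the ball into itself and is a contraction, provided $\tau\le\tau(R)$ is small. Continuity of $t\mapsto\Gamma X(t)$ in $C(K)$ follows from strong continuity of $P_t$ and dominated convergence for the Bochner integral. Uniqueness of mild solutions in $C([0,T];C(K))$ follows from Gronwall applied to the difference of two solutions, both bounded on $[0,T]$. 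The same argument gives the blow-up alternative: the maximal solution exists on $[0,T^*)$, and if $T^*<\infty$ then $\|X(t)\|_\infty\to\infty$ as $t\uparrow T^*$.

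\emph{A priori bound \eqref{est:allen-cahn unif bdd}.} Set $M=\max\{\|X_0\|_\infty,\sqrt{3A_+}\}$. The cubic term is not globally Lipschitz, so we do not run a comparison argument directly on the mild equation. Instead we use a Lipschitz reformulation. Choose $\lambda\ge 0$ large enough that $G(x):=F(x)+\lambda x$ is nondecreasing on $[-M,M]$; this requires $\lambda\ge M^2-A$. Note that $F(M)\le0$ and $F(-M)\ge0$, since $M^2\ge 3A$. Let $\tilde G$ be $G$ clamped to its values at $\pm M$ outside $[-M,M]$. Then $\tilde G$ is globally Lipschitz and nondecreasing.

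Consider the mild equation with semigroup $e^{-\lambda t}P_t$:
\[
Y(t)=e^{-\lambda t}P_tX_0+\int_0^te^{-\lambda(t-s)}P_{t-s}\tilde G(Y(s))\,ds .
\]
It has a unique global solution by Picard iteration. Its iterates started from $Y^0\equiv X_0$ stay in $[-M,M]$, for two reasons. First, the map is order preserving because $P_t$ is positive and $\tilde G$ is monotone. Second, the constants $\pm M$ are a supersolution and a subsolution: for the constant $M$,
\[
e^{-\lambda t}M+\int_0^te^{-\lambda(t-s)}\tilde G(M)\,ds\le M,
\]
since $\tilde G(M)=F(M)+\lambda M\le\lambda M$ and $P_t\mathbf 1=\mathbf 1$. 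The case $-M$ is symmetric. Hence $|Y|\le M$. On $[-M,M]$ we have $\tilde G=G$, so $Y$ is a mild solution of the original equation. (The equivalence of the $\lambda$-shifted and unshifted mild formulations for continuous bounded solutions is the standard variation-of-constants identity.) By uniqueness, $Y=X$ on $[0,T^*)$. Thus $\|X(t)\|_\infty\le M$, so $T^*=\infty$ and the estimate holds for all $t\ge0$.

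The main obstacle is the comparison step. We need that the $\lambda$-shift preserves mild solutions and that order preservation passes to the limit of the Picard iteration. If the shift equivalence is awkward to verify, an alternative is to approximate $X_0$ by elements of $D_C(L)$, apply a pointwise maximum principle at the maximizer of $X$, and pass to the limit using continuous dependence on initial data.
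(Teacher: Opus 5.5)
Your proof is correct, but it reaches the bound through a different comparison mechanism than the paper.

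The paper proves a comparison principle between two \emph{mild solutions}. The difference $w=v-u$ solves a linear mild equation with the bounded potential $a=A-\frac13(v^2+uv+u^2)$. An exponential shift makes the potential nonnegative, and positivity of $P_t$ together with Picard iteration gives $w\ge0$. The paper then sandwiches $X$ between the spatially constant ODE solutions $y_\pm$ started from $\min X_0$ and $\max X_0$; these are exact mild solutions because $P_t\mathbf 1=\mathbf 1$. A phase-line argument finally bounds $|y_\pm(t)|$ by $\max\{|y_\pm(0)|,\sqrt{3A_+}\}$.

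You instead use the time-independent barriers $\pm M$, which are only a super- and subsolution, and exploit them by monotone iteration. After the $\lambda$-shift and clamping, the Picard map is order preserving and leaves the order interval $[-M,M]$ invariant. The global solution is therefore constructed inside that interval and identified with $X$ by uniqueness.

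Your route needs no ODE analysis and no solution-versus-solution comparison, and it makes the blow-up alternative superfluous. It also handles genuine non-solution barriers. That is exactly the type of comparison the paper later invokes for $\overline\Phi,\underline\Phi$ in the proof of Proposition~\ref{thm:phi4 wp}, where its Step~1 principle, as stated, does not directly apply. In exchange, the paper's route yields the sharper time-dependent envelope $y_-(t)\le X(t,\cdot)\le y_+(t)$, which carries long-time information, for instance decay to $0$ when $A\le0$.

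Both arguments rest on the same variation-of-constants identity for a bounded shift of the generator, which the paper also uses without proof. So the step you flag as the main obstacle asks no more than the paper itself assumes, and your fallback via a pointwise maximum principle is not needed.
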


\begin{proof}
Since \(F(f):=Af-\frac13f^3\) is locally Lipschitz on \(C(K)\), \eqref{eq:allen-cahn}, which admits the mild form
\[
    X(t,\cdot)=P_tX_0+\int_0^tP_{t-s}F(X(s,\cdot))ds
\]
has a unique maximal mild solution \(X\in C([0,T_*);C(K))\), where \(T_*\in(0,\infty]\) is the maximal existence time. Moreover, the blow-up alternative gives
\[
    T_*<\infty \quad\Longrightarrow\quad
    \lim_{t\uparrow T_*}\|X(t,\cdot)\|_\infty=\infty.
\]
We will show $T_*=\infty$ by showing \eqref{est:allen-cahn unif bdd} in two steps.



\noindent
\emph{Step 1: comparison with spatially constant solutions.} Recall that $X_0\in C(K)$ and $K$ is compact, so we can set
\[
    m_-:=\min_{r\in K}X_0(r),
    \qquad
    m_+:=\max_{r\in K}X_0(r).
\]
Let \(y_-\) and \(y_+\) solve the scalar ODE \begin{equation}\label{eq:cubic ODE}
    y'(t)=Ay(t)-\frac13y(t)^3
\end{equation}
with initial conditions
\[
    y_-(0)=m_-,
    \qquad
    y_+(0)=m_+.
\]
Since \(P_t1=1\), the spatially constant functions
\[
    r\longmapsto y_-(t),
    \qquad
    r\longmapsto y_+(t)
\]
are mild solutions of \eqref{eq:allen-cahn}.

We claim that $y_-(t)\le X(t,r)\le y_+(t)$
for every $r\in K$ and $t<T_*$.

If \(u(0,\cdot)\leq v(0,\cdot)\) and
\(u,v\) are two mild solutions, then \(w=v-u\) satisfies
\[
    w(t,\cdot)
    =
    P_tw(0,\cdot)
    +
    \int_0^t
    P_{t-s}\bigl(a(s,\cdot)w(s,\cdot)\bigr)\,ds,
\]
where
\[
    a(s,r)
    =
    A-\frac13
    \left(
        v_s(r)^2
        +
        u_s(r)v_s(r)
        +
        u_s(r)^2
    \right).
\]
On every compact time interval on which \(u\) and \(v\) exist,
\(a\) is bounded. Choosing \(C>0\) so that \(a+C\geq0\), and setting
\[
    \widetilde w(t,\cdot):=e^{Ct}w(t,\cdot),
\]
one obtains
\[
    \widetilde w(t,\cdot)
    =
    P_tw(0,\cdot)
    +
    \int_0^t
    P_{t-s}
    \bigl((a(s,\cdot)+C)\widetilde w(s,\cdot)\bigr)\,ds.
\]
Since \(P_t\) is positivity preserving, Picard iteration yields
\(\widetilde w(t,\cdot)\ge0\), and hence \(u_t\leq v(t,\cdot),\) as required.

\medskip

\noindent
\emph{Step 2: the ODE \eqref{eq:cubic ODE} is bounded for all time.}

Let \(y:\mathbb{R}_+\to\mathbb{R}\) solve \eqref{eq:cubic ODE}.
If \(y>\sqrt{3A_+},\) then\[y'=y\left(A-\frac13y^2\right)
    \leq0\]
Similarly, if \( y<-\sqrt{3A_+},\)
then
\[
    y'\geq0.
\]
Therefore
\[
    |y(t)|
    \leq
    \max\left\{
        |y(0)|,\sqrt{3A_+}
    \right\}
\]
for all \(t\geq0\), as required.

The conclusion of step 2 applied to \(y_-\) and \(y_+\) and further combined with the comparison from step 1 gives us
\[\|X(t,\cdot)\|_\infty\leq\max\left\{\|X_0\|_\infty,\sqrt{3A_+}\right\}\]
for every \(t<T_*\). Thus the maximal solution remains uniformly bounded on
\([0,T_*)\) independent of $T_*$, so \(T_*=\infty\), as required.
\end{proof}

\begin{proof}[Proof of Theorem \ref{thm:allen-cahn-limit}]
By direct scaling computation, we have $X^\eta$ satisfying \[\partial_tX^\eta=\eta^{-\frac{3}{2}}\left[-\eta^{\frac{1}{2}}X^\eta+\tanh(\beta_\eta \eta^{\frac{1}{2}}P_\eta X^\eta)\right].\]
Adding and subtracting \(P_\eta X/\eta\) gives
\[
\partial_tX^\eta= \frac{P_\eta-I}{\eta}X^\eta+F_\eta (X^\eta),\qquad
F_\eta(Y):=\frac{\tanh(\beta_\eta\sqrt\eta\,P_\eta Y)-\sqrt\eta\,P_\eta Y}{\eta^{3/2}}.
\]
We thus obtain a mild formulation of $X^\eta$: \[X^\eta(t,r)=S^\eta_tX_0^\eta(r)+\int_0^t S^\eta_{t-s}F_\eta(X^\eta(s,r)) ds,\]
where \(S^\eta_t:=e^{tL_\eta},\quad L_\eta:=\frac{P_\eta-I}{\eta}.\)\\

For the desired result, we need the following statements. \begin{enumerate}
    \item Uniformly on compact subsets of $C(K)$, \[F_\eta (Y)\stackrel{\eta\downarrow0}{\longrightarrow}AY-\frac{Y^3}{3}.\]
    For every $R>0$, there exists $C_R$ independent of sufficiently small $\eta$ such that if $\norm{Y}_\infty,\norm{Z}_\infty\le R$, \[\norm{F_\eta(Y)-F_\eta(Z)}\le C_R\norm{Y-Z}_\infty.\]
    \item As semigroups on $C(K)$, \(S^\eta\stackrel{\eta\downarrow0}{\longrightarrow} P\) locally uniformly in time.
    \item The family of spacetime functions $(X^\eta)_{\eta>0}$ is uniformly bounded in $C([0,T];C(K))$ for any $T>0$.
\end{enumerate}
By the Taylor expansion \(\tanh(z)=z-\frac{z^3}{3}+O(z^5)\) and $\beta_\eta=1+A\eta+o(\eta)$, we have 
\[F_\eta(Y)=\frac{\beta_\eta-1}{\eta}P_\eta Y-\frac{\beta_\eta^3}{3}(P_\eta Y)^3+R_\eta Y,\text{ where } \norm{R_\eta Y}_\infty \lesssim \eta \norm{Y}_\infty^5\] for $Y$ belonging to a bounded subset of $C(K)$, where the implicit constant is uniform over any such bounded subset. A standard $\varepsilon$--net over any compact subset $\mathcal{K}\subset C(K)$ argument then gives, for all such $\mathcal{K}$, \(\sup_{Y\in \mathcal{K}}\norm{P_\eta Y-Y}_\infty\stackrel{\eta\downarrow0}{\longrightarrow} 0\). The first claim of item~1 then follows from \(\frac{\beta_\eta-1}{\eta}\stackrel{\eta\downarrow0}{\longrightarrow}A,\quad \beta_\eta^3\stackrel{\eta\downarrow0}{\longrightarrow}1\) and the boundedness of $\mathcal{K}$. 
For the second claim, by definition we have $F_\eta(Y)=\phi_\eta(P_\eta Y)$, where $\phi_\eta(x):=\frac{\tanh(\beta_\eta \sqrt{\eta}x)-\sqrt{\eta}x}{\eta^{3/2}}$. A direct computation gives \[\phi_\eta'(x)=\frac{\beta_\eta\sech^2(\beta_\eta \sqrt{\eta}x)-1}{\eta}=\frac{(\beta_\eta-1)-\beta_\eta\tanh^2(\beta_\eta \sqrt{\eta}x)}{\eta}.\] Since $x\mapsto\tanh(\beta x)$ is $\beta$--Lipschitz, we have for $|x|\le R$ \[|\phi_\eta'(x)|\le \frac{|\beta_\eta-1|}{\eta}+\beta_\eta^3x^2\le C_R\] where $C_R$ is uniform in small $\eta$ since $(\beta_\eta-1)/\eta\stackrel{\eta\downarrow0}{\longrightarrow}A$. This along with $P_\eta$ being an $L^\infty$ contraction yields the second claim of item 1.\\
For item~2, since
$\|(P_\eta f-f)/\eta-Lf\|_\infty\to0$ as $\eta\downarrow0$
for every $f\in D_C(L)$, the desired result follows from
the Trotter--Kato approximation theorem
\cite[Chapter III, Theorem 4.8]{EngelNagel2000}.\\
For item 3, by the fact that $S^\eta$ is strongly contractive in $C(K)$ and $|\tanh(z)|\le |z|$ we have {\[\norm{X^\eta_t}_\infty\le e^{-\frac{t}{\eta}}\norm{X_0}_\infty+\frac{\beta_\eta}{\eta}\int_0^t e^{-\frac{(t-s)}{\eta}}\norm{X^\eta_s}_\infty ds.\] Gronwall after multiplication on both sides by $e^{t/\eta}$ then gives us \(\norm{X^\eta_t}_\infty\le e^{t\left(\frac{\beta_\eta-1}{\eta}\right)}\norm{X_0}_\infty\le C_T\norm{X_0}_\infty\) for $0\le t\le T$ since $(\beta_\eta-1)/\eta\stackrel{\eta\downarrow0}{\longrightarrow}A$, as desired.}\\
By item 1, we have in the bounded $C(K)$ set of item 3 $F_\eta$ is uniformly Lipschitz. 
{We will show that the set $\set{X_t:0\le t\le T}$ is compact in $C(K)$, from which item 1 will imply that $F_\eta$ converges uniformly to $F$ on this set}. 
Recall that \eqref{eq:allen-cahn} has the mild formulation \[X(t,r)=P_tX_0(r)+\int_0^tP_{t-s}F(X(s,r))ds.\]
This immediately gives us \[|X^\eta(t,r)-X(t,r)|\le |S^\eta_tX^\eta_0(r)-P_tX_0(r)| +\left|\int_0^t S^{\eta}_{t-s}F_\eta(X^\eta(s,r))-P_{t-s}F(X(s,r)) ds\right|.\]

For the initial condition term, we have \[
\begin{aligned}
\|S_t^\eta X^\eta_0-P_tX_0\|_\infty
&\leq\|S_t^\eta(X^\eta_0-X_0)\|_\infty+\|(S_t^\eta-P_t)X_0\|_\infty  \\
&\leq\|X^\eta_0-X_0\|_\infty+\|(S_t^\eta-P_t)X_0\|_\infty .
\end{aligned}
\]
Thus by items 2 and 3,\[\sup_{t\in[0,T]}\|S_t^\eta X^\eta_0-P_tX_0\|_\infty\stackrel{\eta\downarrow0}{\longrightarrow} 0.\]

For the integral term, we will need to show that $S^\eta$ is a contraction in $C(K)$ for all $\eta$. Recall that $P$ is a $C(K)$ contraction, and we have by direct computation \begin{align*}
    S^\eta_t:=&\exp\left(\frac{t(P_\eta-I)}{\eta}\right)=e^{-\frac{t}{\eta}}\sum_{n\ge 0}\frac{(t/\eta)^n}{n!}P_{n\eta}\implies\\
    \norm{S^\eta_t f}_\infty\le&e^{-\frac{t}{\eta}}\sum_{n\ge 0}\frac{(t/\eta)^n}{n!}\norm{P_{n\eta}f}_\infty\le \norm{f}_\infty e^{-\frac{t}{\eta}}\sum_{n\ge 0}\frac{(t/\eta)^n}{n!}=\norm{f}_\infty.
\end{align*}

We now add and subtract to the integrand
\(S_{t-s}^\eta F_\eta(X(s,\cdot))\) and
\(S_{t-s}^\eta F(X(s,\cdot))\) to obtain
\[
\begin{aligned}
&\|S_{t-s}^\eta F_\eta(X^\eta(s,\cdot))
      -P_{t-s}F(X(s,\cdot))\|_\infty \\
&\qquad\leq
\|S_{t-s}^\eta[F_\eta(X^\eta(s,\cdot))-F_\eta(X(s,\cdot))]\|_\infty \\
&\qquad\quad+\|S_{t-s}^\eta[F_\eta(X(s,\cdot))-F(X(s,\cdot))]\|_\infty \\
&\qquad\quad+\|(S_{t-s}^\eta-P_{t-s})F(X(s,\cdot))\|_\infty .
\end{aligned}
\]
Since \(S_t^\eta\) is a $C(K)$ contraction and, by item 3, \(X^\eta(s,\cdot)\) and \(X(s,\cdot)\) remain in a common bounded subset of \(C(K)\), item 1 implies that for some \(C_T<\infty\),
\[\|F_\eta(X^\eta(s,\cdot))-F_\eta(X(s,\cdot))\|_\infty\leq
C_T\|X^\eta(s,\cdot)-X(s,\cdot)\|_\infty .\]
Moreover,
\[\sup_{s\in[0,T]}\|F_\eta(X(s,\cdot))-F(X(s,\cdot))\|_\infty\stackrel{\eta\downarrow0}{\longrightarrow}0.\]
Finally, since \(s\mapsto F(X(s,\cdot))\) is continuous,
\(\mathcal K_T:=\{F(X(s,\cdot)):s\in[0,T]\}\) is compact in \(C(K)\). 
Item 2, together with the uniform boundedness of \(S_t^\eta\) and \(P_t\) in $t$, therefore implies
\[
\sup_{\substack{0\leq s\leq t\leq T}}\|(S_{t-s}^\eta-P_{t-s})F(X(s,\cdot))\|_\infty\stackrel{\eta\downarrow0}{\longrightarrow}0.
\]
Consequently, there exists \(\varepsilon_\eta(T)\to0\) such that
\[\|X^\eta(t,\cdot)-X(t,\cdot)\|_\infty\leq\varepsilon_\eta(T)+C_T\int_0^t\|X^\eta(s,\cdot)-X(s,\cdot)\|_\infty\,ds
\]
for every \(t\in[0,T]\). Gronwall's inequality gives
\[\sup_{t\in[0,T]}\|X^\eta(t,\cdot)-X(t,\cdot)\|_\infty\leq\varepsilon_\eta(T)e^{C_TT}\stackrel{\eta\downarrow0}{\longrightarrow}0.
\]
Hence \(X^\eta\stackrel{\eta\downarrow0}{\longrightarrow} X\quad\text{in }C([0,T];C(K)),
\) as required.
\end{proof}

\section{The stochastic Allen--Cahn limit}\label{sec:phi4 lim proof}

We prove Theorem~\ref{thm:stochastic-limit} by comparing the mild
equations for the interaction-averaged empirical field and the limiting
field $\Phi$.
The comparison is first made for truncated equations, whose drifts
and noise covariances can be controlled without an a priori bound on
the microscopic field. Convergence of these equations and global continuity of $\Phi$
will then allow us to remove the truncation.

We must first define what is meant by a solution of \eqref{eq:phi4}. Let $\xi$ be the space--time white noise on $K$, that is, it is the unique isonormal Gaussian process on $L^2(\mathbb{R}_+\times K)$ (see ex. \cite[Section 5]{CY26}). We write
\[
 W_t(f)=\int_0^t\int_K f(r)\,\xi(dr,ds),\qquad f\in L^2(K,m).
\]
Thus $W$ is a cylindrical Wiener process, with
$\mathbb E[W_t(f)W_u(g)]=(t\wedge u)\langle f,g\rangle_m$.
Set $F(u)=Au-\frac13u^3$ and define the stochastic convolution
\begin{equation}\label{eq:limiting-stochastic-convolution}
 Z_t(r)=\sqrt{\frac2\theta}\int_0^t\int_K
       p_{t-s}(r,r')\,\xi(dr',ds).
\end{equation}
The heat-kernel estimate \eqref{eq:discrete-diagonal-bound} makes
this stochastic integral well-defined in the sense of
Walsh~\cite{Walsh86}. By the Neumann case of
Hambly--Yang \cite[Theorem~1.2]{HamblyYang2018}, $Z$ has a continuous
adapted version on $[0,\infty)\times K$, with $Z_0=0$.
We use this version throughout.

\begin{proposition}\label{thm:phi4 wp}
For every $\Phi_0\in C(K)$, equation \eqref{eq:phi4} has a unique global adapted mild solution with paths in $C([0,\infty);C(K))$, defined as satsifying the integral equation
\begin{equation}\label{eq:phi4-mild}
 \Phi_t(r)=(P_t\Phi_0)(r)
 +\int_0^t\bigl[P_{t-s}F(\Phi_s)\bigr](r)\,ds+Z_t(r).
\end{equation}
\end{proposition}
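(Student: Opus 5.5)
We use the Da Prato--Debussche decomposition: subtract the stochastic convolution and solve a random PDE pathwise. Fix an $\omega$ in the full-measure event on which $Z$ is continuous on $[0,\infty)\times K$ with $Z_0=0$. Set $v_t:=\Phi_t-Z_t$. Then \eqref{eq:phi4-mild} is equivalent to the pathwise mild equation
\begin{equation*}
 v_t=P_t\Phi_0+\int_0^tP_{t-s}F(v_s+Z_s)\,ds .
\end{equation*}
For fixed $\omega$ this is a deterministic equation on $C(K)$ whose nonlinearity $G(s,v)=F(v+Z_s)$ is continuous in $s$ and locally Lipschitz in $v$, uniformly for $s$ in compacts. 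Since $P$ is a strongly continuous contraction semigroup on $C(K)$, the same Picard/contraction argument as in Proposition~\ref{prop:Allen-Chan WP} gives a unique maximal solution $v\in C([0,T_*);C(K))$. It satisfies the blow-up alternative $\|v_t\|_\infty\to\infty$ as $t\uparrow T_*$ whenever $T_*<\infty$.

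Uniqueness for \eqref{eq:phi4} follows directly. Any two continuous mild solutions $\Phi,\Phi'$ share the same $Z$, so $\Phi-Z$ and $\Phi'-Z$ solve the same pathwise equation. Gronwall on compacts, with local Lipschitz bounds from their finite sup norms, then forces them to agree. Adaptedness comes from the construction: $v$ is obtained as the limit of Picard iterates, each a measurable functional of $(Z_s)_{s\le t}$, and $Z$ is adapted.

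\textbf{Main step: an a priori bound excluding blow-up.} Fix $T$ and set $M:=\sup_{s\le T,r\in K}|Z_s(r)|<\infty$. We adapt the comparison of Step~1 in Proposition~\ref{prop:Allen-Chan WP}. The key observation is that, pointwise,
\[
F(v+z)\le -\tfrac1{12}v^3+C_M \quad\text{for } v\ge0,\ |z|\le M,
\]
because $-\tfrac13(v+z)^3\le -\tfrac1{12}v^3+C(1+|z|^3)$ and the linear term $A(v+z)$ is absorbed similarly. The symmetric bound $F(v+z)\ge -\tfrac1{12}v^3-C_M$ holds for $v\le0$. Let $y_\pm$ solve the scalar ODEs
\[
y_+'=-\tfrac1{12}y_+^3+C_M,\qquad y_+(0)=\max\{\|\Phi_0\|_\infty,0\},
\]
and symmetrically for $y_-$. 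These solutions are globally bounded by $\max\{|y_\pm(0)|,(12C_M)^{1/3}\}$.

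To show $v_t\le y_+(t)$, set $w=y_+-v$. Its mild equation has forcing
\[
y_+'-F(v+Z)\;\ge\; -\tfrac1{12}y_+^3+\tfrac1{12}v^3+\bigl(\text{a nonnegative part}\bigr)\;=\;a\,w+(\ge0),
\]
where $a$ is bounded on compacts. This sign claim needs care wherever $v<0$. There one instead uses a monotone majorant: choose $\tilde F_M(v):=\sup_{|z|\le M}F(v+z)$, a locally Lipschitz function, and compare $v$ with the solution of $y'=\tilde F_M(y)$. The same exponential-weight and positivity argument as in Step~1 of Proposition~\ref{prop:Allen-Chan WP} then yields $w\ge0$. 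The argument is symmetric for the lower bound.

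Hence $\|v_t\|_\infty\le C(\omega,T)$ on $[0,T\wedge T_*)$, which gives $T_*=\infty$. Finally $\Phi=v+Z\in C([0,\infty);C(K))$. The delicate point is this one-sided comparison with a time-dependent, only continuous perturbation $Z$. I would handle it precisely through the majorant $\tilde F_M$, which removes the explicit dependence on $z$ and reduces the argument to the autonomous comparison already done.
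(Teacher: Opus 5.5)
Your proposal is correct, but your barrier differs from the paper's. The paper never passes to the remainder $\Phi-Z$ and never uses an ODE. It sets $Y_t=P_t\Phi_0+Z_t$, $R_T=\sup_{t\le T}\|Y_t\|_\infty$ and $M_T=R_T+\sqrt{3A_+}$. It then observes that $Y\pm M_T$ are exact mild super- and subsolutions of \eqref{eq:phi4-mild}. The constant passes through because $P_t1=1$, and $Y+M_T\ge\sqrt{3A_+}$ forces $F(Y+M_T)\le0$ (symmetrically from below). The noise cancels in the difference, and the linearized coefficient is an explicit bounded polynomial. Comparison then gives the bound $\|\Phi_t\|_\infty\le 2R_T+\sqrt{3A_+}$ directly.

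You instead subtract only $Z$. You then bound $v=\Phi-Z$ by spatially constant solutions of $y'=\tilde F_M(y)$, with $\tilde F_M(u)=\sup_{|z|\le M}F(u+z)$, and the analogous infimum from below. This reduces everything to the autonomous comparison of Proposition~\ref{prop:Allen-Chan WP}, with nonnegative forcing $\tilde F_M(v)-F(v+Z)\ge0$. It costs an auxiliary nonlinearity whose difference quotient $a_s$ is only bounded measurable. That is harmless, since $P_t$ acts positively on bounded Borel functions through its kernel, so the positivity iteration runs in $L^\infty$. In return you get a bound depending only on $\|\Phi_0\|_\infty$ and $\sup|Z|$. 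You also treat uniqueness and adaptedness more explicitly than the paper, which only invokes Picard iteration.

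Two small corrections to your write-up. First, your initial comparison with $y_+'=-\tfrac1{12}y_+^3+C_M$ is not valid as stated. The subsolution inequality $F(v+Z)\le-\tfrac1{12}v^3+C_M$ is needed at every point and fails where $v$ is very negative. So the comparison must be run with $\tilde F_M$ on all of $\mathbb R$, not patched region by region. The cubic bound is then used only to show that $y_+$ stays bounded above. The inequality $\tilde F_M\ge F>0$ on $(-\infty,-\sqrt{3A_+})$ shows it stays bounded below, so $y_+$ exists on $[0,T]$. Second, $\tilde F_M$ is a majorant but not monotone.
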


\begin{proof}
    Write $\Phi=Y+v$, where $Y_t(r):=P_t\Phi_0(r)+Z_t(r)$.
    By \cite[Theorem 1.2]{HamblyYang2018} $Y\in C([0,T];C(K))$ for all $T>0$ almost surely, so the random variable \[R_T:=\sup_{t\in [0,T]}\norm{Y(t,\cdot)}_\infty\] is finite almost surely.
    Since $P_t$ is an $L^\infty$ contraction and $F$ is locally Lipschitz, we have, almost surely for some $T^*<\infty$ and all $0<T<T^*$, \eqref{eq:phi4-mild} has a unique solution on $C([0,T],C(K))$ by a similar Picard iteration argument as in the proof of Proposition \ref{prop: hydro lim WP} since the right hand side of the mild formulation satisfies \[\norm{Y_t+\int_0^t P_{t-s}F(\Phi_s)ds}_\infty\le R_T+\int_0^t|A|\norm{\Phi_s}_\infty+\frac{1}{3}\norm{\Phi}_\infty^3 ds\] for all $0<t<T<T^*$, and any two $Y,Z$ in a ball of radius $B$ in $C(K)$ satisfy \[\norm{F(Y)-F(Z)}_\infty \le (|A|+B^2)\norm{Y-Z}_\infty. \]
    {Abusing notation, define $T^*\in (0,\infty]$ to be the maximal lifetime of the solution $\Phi$, so that \[T^*<\infty\implies \limsup_{t\uparrow T^*}\norm{\Phi_t}_\infty=\infty.\] We will show} show $T^*=\infty$. For any $T>0$, set \(A_+:=\max\{A,0\},\quad M_T:=R_T+\sqrt{3A_+}\) and define for $0\le t \le T$ \[\overline{\Phi}_t:=Y_t+M_T,\qquad \underline{\Phi}_t:=Y_t-M_T.\]
    By definition, we have for all $r\in K$ \[\overline{\Phi}_t(r)\ge \sqrt{3A_+},\qquad \underline{\Phi}_t(r)\le -\sqrt{3A_+}.\]
    Since \(
    F(u)=Au-\frac13u^3
        =u\left(A-\frac13u^2\right)\), it follows that\[F(\overline\Phi_t(r))\leq0,\qquad F(\underline\Phi_t(r))\geq0\]
Since \(P_t1=1\), we have
\[
    \overline\Phi_t
    =P_t(\Phi_0+M_T)+Z_t,
    \qquad
    \underline\Phi_t
    =P_t(\Phi_0-M_T)+Z_t.
\]
Consequently,
\[
    \overline\Phi_t
    \geq
    P_t(\Phi_0+M_T)+Z_t
    +\int_0^tP_{t-s}F(\overline\Phi_s)\,ds,
\]
while
\[
    \underline\Phi_t
    \leq
    P_t(\Phi_0-M_T)+Z_t
    +\int_0^tP_{t-s}F(\underline\Phi_s)\,ds.
\]
Thus \(\overline\Phi\) and \(\underline\Phi\) are respectively a mild super- and subsolution of \eqref{eq:phi4-mild}. 
Since \[\Phi_0-M_T\leq\Phi_0\leq\Phi_0+M_T,\]the comparison principle gives, on \([0,T\wedge T^*)\),\[Y_t-M_T\leq\Phi_t\leq Y_t+M_T.\]
In particular,\[\sup_{0\leq t<T\wedge T^*}\|\Phi_t\|_\infty\leq R_T+M_T=2R_T+\sqrt{3A_+}<\infty\]
Since the right hand side is bounded for any $T>0$ almost surely, we have \(T^*=\infty\) almost surely.
\end{proof}

\begin{remark}
    The above proof is in some sense an easier version of the arguments in \cite{DPD,MW17,TW18,CY26}. Crucially, the result here generalizes to {compact and strongly} recurrent Dirichlet spaces that admit a {continuous} heat kernel {such that the SPDE framework of Walsh \cite{Walsh86} can be formulated}, where the proof requires no changes.
\end{remark}

\subsection{The local field and truncation}

For the remainder of this section, write $\eta=\eta_N$,
$\beta=\beta_{\eta_N}$, $\sigma^N=\sigma^{N,\eta_N}$ and $n=|V_N|$,
and assume the scaling in Theorem~\ref{thm:stochastic-limit}.
The rescaled spins have supremum norm $\eta^{-1/2}$, so we instead
study the local field entering the flip rates. Define
\[
 H_t^N(x)=\eta^{-1/2}h_{N,\eta}(\sigma^N_{t/\eta},x),
 \qquad x\in V_N,
\]
and use the same notation for its harmonic extension to $K$.
Define the diagonal operator $D_N$ and the interaction operator $K_N$ by
\[
 D_Ng(x)=\frac{p_\eta^N(x,x)}n g(x),\qquad
 K_N=P_\eta^N-D_N.
\]
Then $H_t^N=\eta^{-1/2}K_N\sigma^N_{t/\eta}$ on $V_N$.
The matrix $K_N$ is symmetric, has nonnegative entries and has row
sums at most one.

Let $M^N$ be the martingale part of $\eta^{-1/2}\sigma^N_{t/\eta}$,
starting at zero. A flip at $x$ changes this vector by
$-2\eta^{-1/2}\sigma^N_{(t/\eta)-}(x)\mathbf1_{\{x\}}$ and occurs at
rate $c_t^N(x)/\eta$, where
$c_t^N(x)=c_{N,\eta,\beta}(\sigma^N_{(t/\eta)-},x)$. The spin drift
and predictable brackets are therefore
\begin{align}
 d(\eta^{-1/2}\sigma^N_{t/\eta})
 &=\eta^{-3/2}\bigl[-\sigma^N_{t/\eta}
       +\tanh(\beta\sqrt\eta\,H_t^N)\bigr]dt+dM_t^N,
       \notag\\
 d\langle M^N(x),M^N(y)\rangle_t
 &=\frac4{\eta^2}\mathbf1_{\{x=y\}}c_t^N(x)\,dt.
 \label{eq:stochastic-coordinate-bracket}
\end{align}
Here and below scalar functions act pointwise on $V_N$. Set
\[
 f_\eta(z)=\frac{\tanh(\beta\sqrt\eta\,z)-\sqrt\eta\,z}
                  {\eta^{3/2}},\qquad
 S_t^N=\exp\!\left(\frac t\eta(K_N-I)\right).
\]
Since $K_N$ is nonnegative and has row sums at most one, $S^N$ is a
positive contraction on $\ell^\infty(V_N)$.
Applying $K_N$ to the spin equation gives the exact local-field equation
\begin{equation}\label{eq:stochastic-local-field}
 dH_t^N=\left[\frac{K_N-I}{\eta}H_t^N
                    +K_Nf_\eta(H_t^N)\right]dt+K_N\,dM_t^N.
\end{equation}
Thus its mild form is
\begin{equation}\label{eq:stochastic-local-mild}
 \begin{aligned}
 H_t^N&=S_t^NH_0^N
       +\int_0^tS_{t-s}^NK_Nf_\eta(H_s^N)\,ds+Z_t^N,
 \\
 Z_t^N&=\int_0^tS_{t-s}^NK_N\,dM_s^N.
 \end{aligned}
\end{equation}
This is the local-field formulation used in
\cite[Section~4]{IsingKac-Phi4-2D}. Its comparison with
\eqref{eq:phi4-mild} requires convergence of the semigroups, the drifts,
and the stochastic convolutions. For the last two terms, the estimates
depend on the size of the field. Taylor's formula gives
\[
 f_\eta(z)=\frac{\beta-1}{\eta}z-\frac{\beta^3}{3}z^3+O_R(\eta),
 \qquad |z|\le R.
\]
Thus $f_\eta\to F$ uniformly on each bounded interval, with Lipschitz
constants bounded uniformly in $N$ there. The noise covariance has a
similar dependence on the field: the Glauber rate satisfies
\[
 \left|c_t^N(x)-\frac12\right|
 \le\frac\beta2\sqrt\eta\,|H_{t-}^N(x)|.
\]
When $\|H_{t-}^N\|_\infty\le R$, the tested martingales
$\langle M^N,\varphi\rangle_{m_N}$ and
$\langle M^N,\psi\rangle_{m_N}$, for fixed $\varphi,\psi\in C(K)$,
therefore have bracket density
\[
 \frac4{n\eta^2}\langle c_t^N\varphi,\psi\rangle_{m_N}
 =\frac2{n\eta^2}\langle\varphi,\psi\rangle_{m_N}
   +O_{R,\varphi,\psi}(\sqrt\eta).
\]
Since $n\eta^2\to\theta$ and $m_N\Rightarrow m$, this converges to
$(2/\theta)\langle\varphi,\psi\rangle_m$, the covariation rate of
$\sqrt{2/\theta}\,W_t(\varphi)$ and $\sqrt{2/\theta}\,W_t(\psi)$.

These estimates would suffice if the local field stayed in a fixed ball. 
{However, this is difficult to establish if true (it may not be): the elementary bound $\|H_t^N\|_\infty\le\eta^{-1/2}$ does not even make the rate error above vanish.}
We therefore {avoid pursing such a bound and opt for another strategy. We} first compare globally defined truncated equations that agree with the original equations until their fields reach norm $R$. 
For fixed $R$, their coefficients can be controlled on the whole time interval. Once their convergence is proved, agreement up to exit will bound the microscopic exit probability by that of the global limiting solution. We can then let $R\to\infty$.

Fix $T>0$ and $R>\|\Phi_0\|_\infty$. Let
$\chi_R(z)=(-R)\vee(z\wedge R)$, and define the truncated drifts
\[
 f_{\eta,R}=f_\eta\circ\chi_R,\qquad F_R=F\circ\chi_R.
\]
They agree with the original drifts on $[-R,R]$ and are constant
on each of the two complementary intervals. The local Taylor expansion
and the derivative bound
\[
 |f_\eta'(z)|
 =\left|\frac{\beta-1}{\eta}
       -\frac\beta\eta\tanh^2(\beta\sqrt\eta\,z)\right|
 \le \left|\frac{\beta-1}{\eta}\right|+\beta^3R^2,
 \qquad |z|\le R,
\]
give
\begin{equation}\label{eq:truncated-drift-convergence}
 \|f_{\eta,R}-F_R\|_\infty\longrightarrow0,\qquad
 \sup_N\operatorname{Lip}(f_{\eta,R})<\infty.
\end{equation}
Thus truncation makes the drift approximation uniform on $\mathbb R$
and gives the Lipschitz bound needed to compare the mild equations.

To control the noise covariance on the whole interval, set
\[
 \tau_R^N=\inf\{t\ge0:\|H_t^N\|_\infty\ge R\}.
\]
Before this exit, the rate estimate above applies. Stopping $M^N$
at $\tau_R^N$ would leave the random factor
$\mathbf1_{\{t\le\tau_R^N\}}$ in its bracket. Since the exit
probability is not yet controlled, this would not identify the full
limiting stochastic convolution. Instead, we keep the original noise
through the exit and continue it afterwards with independent noise
having the required bracket.

Following Mourrat and Weber
\cite[Section~6]{IsingKac-Phi4-2D}, extend $M^N$ on an enlarged
probability space. The extension $M^{N,R}$ agrees with $M^N$ up to and
including $\tau_R^N$. After this time, each coordinate is continued
using independent compensated Poisson processes with jump size
$2/\sqrt\eta$ and rate $1/(2\eta)$, independent of the original system.
The continued martingales have no simultaneous jumps at distinct sites,
and their brackets satisfy
\begin{equation}\label{eq:continued-bracket}
 \begin{aligned}
 d\langle M^{N,R}(x),M^{N,R}(y)\rangle_t
 &=\frac4{\eta^2}\mathbf1_{\{x=y\}}c_t^{N,R}(x)\,dt,\\
 c_t^{N,R}(x)&=c_t^N(x)\mathbf1_{\{t\le\tau_R^N\}}
                +\frac12\mathbf1_{\{t>\tau_R^N\}}.
 \end{aligned}
\end{equation}
Before a positive exit, including its left limit, the local field has
norm at most $R$. After the exit the rate is exactly $1/2$, so
\begin{equation}\label{eq:continued-rate-error}
 \sup_{t>0,\,x\in V_N}|c_t^{N,R}(x)-\tfrac12|
 \le\tfrac12\beta R\sqrt\eta\longrightarrow0.
\end{equation}
Define
\[
 Z_t^{N,R}=\int_0^t S_{t-s}^NK_N\,dM_s^{N,R}.
\]
When viewed on $K$, this field and $Z^N$ denote their harmonic
extensions. They agree for $t\le\tau_R^N$.

The truncated microscopic equation is
\begin{equation}\label{eq:truncated-microscopic-equation}
 H_t^{N,R}=S_t^NH_0^N
   +\int_0^t S_{t-s}^NK_Nf_{\eta,R}(H_s^{N,R})\,ds
   +Z_t^{N,R}.
\end{equation}
The bounded Lipschitz drift gives a unique global c\`adl\`ag solution
on $V_N$ by Picard iteration; we use the same notation for its harmonic
extension. Truncation acts on the drift, so the solution may still
leave the ball of radius $R$. It agrees with $H^N$ through $\tau_R^N$:
uniqueness gives agreement before the exit, and the noise has the same
jump at the exit. This also covers $\tau_R^N=0$, when both fields
start at $H_0^N$.

The limiting truncated equation
\[
 \Phi_t^R=P_t\Phi_0+\int_0^tP_{t-s}F_R(\Phi_s^R)\,ds+Z_t
\]
has a unique global continuous solution. Driven by the same $Z$,
it agrees with $\Phi$ until either field reaches norm $R$.

\subsection{Convergence of the stochastic convolution}

We now prove $Z^{N,R}\Longrightarrow Z$ for each fixed $R$.
Semigroup convergence will identify the limiting covariance, while
bounds on the kernels and their increments will give tightness.

By \eqref{eq:discrete-diagonal-bound} and $n\eta^2\to\theta$,
\begin{equation}\label{eq:diagonal-deletion-bound}
 \|D_N\|_{\infty\to\infty}\le C\eta^{2-\alpha},\qquad
 \frac{\|D_N\|_{\infty\to\infty}}\eta\le C\eta^{1-\alpha}
 \longrightarrow0.
\end{equation}
Under the harmonic-extension convention, for every $T>0$ and
$f\in C(K)$,
\begin{equation}\label{eq:microscopic-semigroup-convergence}
 \begin{aligned}
 \sup_{t\le T}\|S_t^N(f|_{V_N})-P_tf\|_\infty
 &\longrightarrow0,\\
 \|K_N(f|_{V_N})-f\|_\infty&\longrightarrow0,
 \end{aligned}
\end{equation}
uniformly for $f$ in compact subsets of $C(K)$. Indeed, let \(J\) be Poisson with mean \(t/\eta\), so that

$$
\widetilde S_t^N f_N=\mathbb E[P_{\eta J}^Nf_N],
\qquad
\mathbb E|\eta J-t|^2=\eta t.
$$
Fix \(0<\delta\le1\). By Chebyshev's inequality,
$$
\sup_{t\le T}\mathbb P(|\eta J-t|>\delta)
\le \frac{\eta T}{\delta^2}.
$$On the complementary event, \(\eta J\in[0,T+1]\). Thus Proposition \ref{prop:gasket-approximation}, contractivity, and harmonic extension give$$
\sup_{t\le T}
\|\widetilde S_t^Nf_N-P_tf\|_\infty
\le
\sup_{s\le T+1}\|P_s^Nf_N-P_sf\|_\infty
+\omega_f(\delta)
+2\|f\|_\infty\frac{\eta T}{\delta^2},
$$where$$
\omega_f(\delta)
:=
\sup_{\substack{s,t\in[0,T+1]\\|s-t|\le\delta}}
\|P_sf-P_tf\|_\infty.
$$
The first term tends to zero by Proposition 1, while \(\omega_f(\delta)\to0\) by strong continuity. Taking \(N\to\infty\) at fixed \(\delta\), and then \(\delta\downarrow0\), proves the desired uniform convergence.
 Duhamel's formula
and \eqref{eq:diagonal-deletion-bound} give
\[
 \sup_{t\le T}\|S_t^N-\widetilde S_t^N\|_{\infty\to\infty}
 \le T\|D_N\|_{\infty\to\infty}/\eta\longrightarrow0.
\]
The assertion for $K_N$ follows by applying
Proposition~\ref{prop:gasket-approximation} at time $\eta$.
Finite nets and contractivity give uniformity on compact families.

For tightness of the stochastic convolutions, we also need bounds on
the kernels and their increments, including at arbitrarily small times.
Let $q_t^N$ be the density of $S_t^NK_N$ with respect to $m_N$:
\[
 S_t^NK_Ng(x)=\frac1n\sum_{y\in V_N}q_t^N(x,y)g(y).
\]
This kernel is symmetric, nonnegative and has mass at most one.
The strong semigroup convergence in \eqref{eq:microscopic-semigroup-convergence} does not itself control \(L^2(m_N)\) kernel increments uniformly at small times. Such estimates are needed down to the smallest graph scales. Moreover, the deleted diagonal \(D_N\) need not commute with \(L_N\), so \(S_t^NK_N\) is not generally diagonal in the graph Laplacian eigenbasis.
In the following estimates, $\|\cdot\|_2$ denotes the norm in
$L^2(m_N)$.

\begin{proposition}\label{prop:microscopic-kernel}
For every $T>0$, there is $C_T$ independent of $N$ such that,
for $0\le t\le T$ and $x,y\in V_N$,
\begin{align}
 \sup_{x,y\in V_N}q_t^N(x,y)
       +\sup_{x\in V_N}\|q_t^N(x,\cdot)\|_2^2
 &\le C_T(t+\eta)^{-\alpha},
       \label{eq:noise-kernel-size}\\
 \int_0^T\|q_u^N(x,\cdot)-q_u^N(y,\cdot)\|_2^2\,du
 &\le C_T R(x,y)^{\alpha/2}.
       \label{eq:noise-kernel-space}
\end{align}
For $x\in V_N$, $s\ge0$, $h>0$ and $s+h\le T$,
\begin{equation}\label{eq:noise-kernel-time}
 \int_0^s\|q_{u+h}^N(x,\cdot)-q_u^N(x,\cdot)\|_2^2\,du
 +\int_0^h\|q_u^N(x,\cdot)\|_2^2\,du
 \le C_T h^{1-\alpha}.
\end{equation}
\end{proposition}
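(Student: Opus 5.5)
We will reduce all three estimates to bounds on the undeleted operator $\widetilde S_t^N P_\eta^N$. Its kernel is an explicit Poisson mixture of heat kernels, so the gasket estimates \eqref{eq:discrete-diagonal-bound} and \eqref{eq:gasket-energy-normalization} apply to it. The diagonal deletion is then treated as a small perturbation.

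\emph{Step 1: reference kernel.} Write $\widetilde S_t^N=\mathbb E[P^N_{\eta J}]$ with $J\sim\mathrm{Poisson}(t/\eta)$, as in the proof of \eqref{eq:microscopic-semigroup-convergence}. Then $\widetilde S^N_tP^N_\eta$ has kernel
\[
\widetilde q_t^N=\mathbb E\bigl[p^N_{\eta(J+1)}\bigr].
\]
By \eqref{eq:discrete-diagonal-bound},
\[
\sup_{x,y}\widetilde q_t^N(x,y)\le C\,\mathbb E\bigl[(\eta(J+1))^{-\alpha}\bigr]+C .
\]
A Poisson moment bound, $\mathbb E[(J+1)^{-\alpha}]\le C(1+t/\eta)^{-\alpha}$, gives $C_T(t+\eta)^{-\alpha}$. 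The $L^2$ norm is controlled by the sup bound, since the mass is at most one: $\|q(x,\cdot)\|_2^2\le \sup q$.

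\emph{Step 2: removing the deletion.} Since $K_N\le P_\eta^N$ entrywise and $S^N_t\le\widetilde S^N_t$ as positive operators, we get $0\le q^N_t\le\widetilde q^N_t$. This positivity comparison follows from the series expansion of the exponential with nonnegative entries, after writing $(K_N-I)/\eta=(P^N_\eta-I)/\eta-D_N/\eta$. The comparison immediately gives \eqref{eq:noise-kernel-size}.

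For the increment bounds we need the difference itself, not only domination. Duhamel gives
\[
q^N_t-\widetilde q^N_t=-\int_0^t S^N_{t-u}(D_N/\eta)\widetilde S^N_u P^N_\eta\,du-\widetilde S_t^N D_N .
\]
By \eqref{eq:diagonal-deletion-bound}, the sup of its kernel, measured against the $m_N$-density, is bounded by $C_T\eta^{1-\alpha}(t+\eta)^{-\alpha}$ plus the term $\widetilde S^N_tD_N$. The kernel of $\widetilde S^N_tD_N$ is
\[
\widetilde p(x,y)\,p^N_\eta(y,y)/n\le C\eta^{-\alpha}\eta^{2-\alpha}\cdot(\dots).
\]
Hence the squared $L^2$ contribution of the difference integrates over $[0,T]$ to something that is $o(1)$ and also $\le C\eta^{\gamma}$ for some $\gamma>0$. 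For $R(x,y)\ge c(3/5)^N$, i.e. for distinct vertices, this is dominated by $R(x,y)^{\alpha/2}$ once $\eta\asymp3^{-N/2}$ is compared with $(3/5)^{N\alpha/2}$. I will check these exponents explicitly; if they fail, I will fall back on the energy method of Step 3 applied directly to $S^N$, using that $K_N$ is a symmetric sub-Markov operator.

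\emph{Step 3: increments for the reference kernel.} This is the main obstacle. For the spatial bound \eqref{eq:noise-kernel-space}, use the resistance inequality $|g(x)-g(y)|^2\le R_N(x,y)\mathcal E_N(g,g)$ together with symmetry of the kernel:
\[
\|\widetilde q_u(x,\cdot)-\widetilde q_u(y,\cdot)\|_2^2=\bigl|G(x)-G(x)\dots\bigr| .
\]
Concretely, set $g=\widetilde q_{2u}(\cdot,x)-\widetilde q_{2u}(\cdot,y)$. Since $\widetilde S^N$ commutes with $L_N$, the squared norm equals $g(x)-g(y)$, which is at most $\bigl(R_N(x,y)\mathcal E_N(g,g)\bigr)^{1/2}$. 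Spectral calculus gives $\mathcal E_N(\widetilde q_{2u}(\cdot,x))\le C u^{-1}\widetilde q_u(x,x)\le Cu^{-1-\alpha}$. This pointwise bound is interpolated with the trivial bound $Cu^{-\alpha}$. Integrating $\min\{u^{-\alpha},R^{1/2}u^{-(1+\alpha)/2}\}$ over $u$ and splitting at $u=R$ yields $CR^{1-\alpha}\le CR^{\alpha/2}$, since $1-\alpha>\alpha/2$ for $\alpha=\log3/\log5$.

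For the time bound \eqref{eq:noise-kernel-time}, the second integral is immediate from Step 1: $\int_0^h(u+\eta)^{-\alpha}du\le Ch^{1-\alpha}$. For the first integral, expand in the eigenbasis of $L_N$. The Poisson semigroup acts on an eigenvalue $-\lambda$ through the multiplier $\exp\bigl(t(e^{-\eta\lambda}-1)/\eta\bigr)=e^{-t\mu}$, with $\mu\ge0$. The integral then becomes a sum of $\int_0^\infty(1-e^{-h\mu})^2e^{-2u\mu}du\le\min(h,\mu^{-1})$ weighted by eigenfunction values at $x$. This gives a bound of the form
\[
\int_0^\infty \min(h,1/\mu)\,dN_x(\mu)\le C\int_0^{h}\widetilde q_{v}(x,x)\,dv\le Ch^{1-\alpha},
\]
using the identity $\min(h,\mu^{-1})\le 2\int_0^h e^{-v\mu}dv$ up to constants.
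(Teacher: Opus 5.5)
Your size bound \eqref{eq:noise-kernel-size} is correct and matches the paper's argument: entrywise domination by the Poisson mixture of heat kernels, plus the Poisson moment bound. The other two estimates have gaps. The spatial one contains an incorrect step, and the temporal one is never transferred from the reference kernel to $q^N$.

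\emph{Spatial estimate.} The inequality $1-\alpha>\alpha/2$ you invoke is false. It is equivalent to $\alpha<2/3$, i.e.\ $3\log3<2\log5$, i.e.\ $27<25$; numerically $1-\alpha\approx0.317<0.341\approx\alpha/2$. So your bound $CR^{1-\alpha}$ is weaker than $CR(x,y)^{\alpha/2}$ for small $R$, and \eqref{eq:noise-kernel-space} does not follow. The loss comes from splitting at $u=R$. The bounds $u^{-\alpha}$ and $R^{1/2}u^{-(1+\alpha)/2}$ cross at $u_*=R^{1/(1-\alpha)}$, the diffusion time associated with resistance $R$. Splitting there gives $C_T(R+R^{1/2})\le C_TR^{\alpha/2}$. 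Alternatively, since $\widetilde S^N_uP^N_\eta$ is diagonal in the $L_N$-eigenbasis, you can integrate exactly. Put $v=n(\mathbf 1_{\{x\}}-\mathbf 1_{\{y\}})$; because $v\perp1$, only eigenvalues $\lambda>0$ contribute. The multiplier of $\int_0^\infty\|\widetilde S^N_uP^N_\eta v\|_2^2\,du$ is $\eta e^{-2\eta\lambda}/(2(1-e^{-\eta\lambda}))\le1/(2\lambda)$, which gives $\frac12R_N(x,y)$.

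With this repair your reference-kernel route is a valid alternative. The Step 2 error is $O(\eta^{2-2\alpha})$ after integrating over $[0,T]$, and this is at most $\eta^{1-\alpha}\le CR(x,y)^{\alpha/2}$ for distinct vertices. Two slips there: with $P^N_\eta$ in the Duhamel integral the boundary term is $-S^N_tD_N$, not $-\widetilde S^N_tD_N$. Also, the atom $e^{-t/\eta}I$ of $S^N_t$ in that term must be bounded separately; it contributes $O(\eta^{3-2\alpha})$. The paper avoids the comparison entirely. It uses the exact identity $2\int_0^\infty\|q^N_u(x,\cdot)-q^N_u(y,\cdot)\|_2^2\,du=\langle v,K_N^2C_N^{-1}v\rangle$, together with $K_N^2C_N^{-1}=C_N^{-1}-2\eta I+\eta^2C_N$ and $\langle v,C_N^{-1}v\rangle\le\langle v,B_N^\dagger v\rangle$. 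In that argument the deletion costs only $\eta(p^N_\eta(x,x)+p^N_\eta(y,y))=O(\eta^{1-\alpha})$.

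\emph{Temporal estimate.} Your expansion is done only for $\widetilde S^N$ in the eigenbasis of $L_N$. $S^N_uK_N$ is not diagonal there, because $D_N$ does not commute with $L_N$, and you never transfer the bound to $q^N$. The Step 2 comparison cannot do this: it adds $C_T\eta^{2-2\alpha}$, which is not $O(h^{1-\alpha})$ once $h\ll\eta^2$, whereas \eqref{eq:noise-kernel-time} is claimed for every $h>0$.

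The missing observation, which is how the paper argues, is that $S^N_uK_N$ is a function of the symmetric matrix $K_N$, equivalently of $C_N=(I-K_N)/\eta$. Expand $q^N_u(x,\cdot)=S^N_uK_N(n\mathbf 1_{\{x\}})$ in an orthonormal eigenbasis of $C_N$. For each eigenvalue $a\ge0$ one has $\int_0^s(e^{-(u+h)a}-e^{-ua})^2\,du\le\int_0^he^{-2ua}\,du$. Hence the first integral in \eqref{eq:noise-kernel-time} is bounded by the second. The second is already controlled by your domination $q^N\le\widetilde q^N$, since $\|q^N_u(x,\cdot)\|_2^2\le\sup_y q^N_u(x,y)\le C_T(u+\eta)^{-\alpha}$. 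No reference kernel is needed for this part.
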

\begin{proof}
The entrywise inequalities $0\le K_N\le P_\eta^N$ and the
exponential series give
\[
 0\le S_t^NK_N\le e^{-t/\eta}
       \sum_{j\ge0}\frac{(t/\eta)^j}{j!}P_{(j+1)\eta}^N.
\]
For $J$ Poisson with mean $a$, splitting at $J=a/2$ and applying
the exponential Markov inequality gives
$\mathbb E(J+1)^{-\alpha}\le C(1+a)^{-\alpha}$.
Together with \eqref{eq:discrete-diagonal-bound}, this proves the
pointwise bound in \eqref{eq:noise-kernel-size}. Nonnegativity and
the mass bound give its $L^2$ part. In particular,
\begin{equation}\label{eq:noise-kernel-integral}
 \sup_x\int_0^h\|q_u^N(x,\cdot)\|_2^2\,du
 \le C_T h^{1-\alpha},\qquad 0<h\le T.
\end{equation}

For the spatial estimate, fix distinct $x,y\in V_N$ and put
$v=n(\mathbf1_{\{x\}}-\mathbf1_{\{y\}})$. All inner products below
are in $L^2(m_N)$. Write
$B_N=(I-P_\eta^N)/\eta$ and $C_N=(I-K_N)/\eta=B_N+D_N/\eta$.
The operator $C_N$ is strictly positive, and $K_N=I-\eta C_N$ commutes
with it. Hence
\begin{equation}\label{eq:noise-kernel-resolvent}
 2\int_0^\infty
       \|q_u^N(x,\cdot)-q_u^N(y,\cdot)\|_2^2\,du
 =\langle v,K_N^2C_N^{-1}v\rangle.
\end{equation}
Let $B_N^\dagger$ denote the inverse of $B_N$ on constants' orthogonal
complement, extended by zero on constants. Since $v\perp1$,
\[
 \langle v,C_N^{-1}v\rangle
 =\sup_g\{2\langle v,g\rangle-\langle g,C_Ng\rangle\}
 \le\langle v,B_N^\dagger v\rangle.
\]
Discarding \(K_N^2\) here would give the bound
\[
\langle v,C_N^{-1}v\rangle
\le \langle v,B_N^\dagger v\rangle
\le R_N(x,y)+2\eta n,
\]
which is insufficient since \(\eta n\to\infty\).
Keeping the factor $K_N^2$ in \eqref{eq:noise-kernel-resolvent} and
using $K_N^2C_N^{-1}=C_N^{-1}-2\eta I+\eta^2C_N$ therefore gives
\[
 \langle v,K_N^2C_N^{-1}v\rangle
 \le\langle v,(B_N^\dagger-\eta I-\eta P_\eta^N)v\rangle
                 +\eta\langle v,D_Nv\rangle.
\]
On a nonconstant eigenvector of $-L_N$ with eigenvalue $\lambda>0$,
the first operator on the right has multiplier
\[
 \frac{\eta}{1-e^{-\eta\lambda}}-\eta-\eta e^{-\eta\lambda}
 =\frac{\eta e^{-2\eta\lambda}}{1-e^{-\eta\lambda}}
 \le\frac1\lambda.
\]
The variational formula for resistance identifies
$\langle v,(-L_N)^\dagger v\rangle=R_N(x,y)$, while
$\langle v,D_Nv\rangle=p_\eta^N(x,x)+p_\eta^N(y,y)$. Thus
\[
 \int_0^\infty\|q_u^N(x,\cdot)-q_u^N(y,\cdot)\|_2^2\,du
 \le\frac12R_N(x,y)
       +\frac\eta2\bigl[p_\eta^N(x,x)+p_\eta^N(y,y)\bigr].
\]
By \eqref{eq:discrete-diagonal-bound}, the second term is
$O(\eta^{1-\alpha})$. The scaling $\eta\asymp3^{-N/2}$ and the
minimum resistance bound of Section~\ref{sec:gasket} give
\[
 \eta^{1-\alpha}\asymp\bigl((3/5)^N\bigr)^{\alpha/2}
       \le C R(x,y)^{\alpha/2}.
\]
Since $R$ is bounded and $R_N=(1+3^{-N})R$, this proves
\eqref{eq:noise-kernel-space}; both sides vanish when $x=y$.

For the temporal estimate, diagonalize $C_N$ and $K_N=I-\eta C_N$.
For every eigenvalue $a>0$,
\[
 \int_0^s(e^{-(u+h)a}-e^{-ua})^2\,du
 =\frac{(1-e^{-ha})^2(1-e^{-2sa})}{2a}
 \le\int_0^h e^{-2ua}\,du.
\]
Summing against the squared coefficients of $K_N(n\mathbf1_{\{x\}})$
in an orthonormal eigenbasis bounds the first integral in
\eqref{eq:noise-kernel-time} by the second. Apply
\eqref{eq:noise-kernel-integral} to conclude.
\end{proof}

The following martingale estimate converts the kernel bounds into moment estimates for the stochastic convolution. The chaining estimate then gives a uniform modulus of continuity on a finite grid in time and space; oscillations between grid times will be controlled separately.

\begin{lemma}\label{lem:auxiliary-martingale}
Fix $N$, $\eta,T>0$ and write $n=|V_N|$. Suppose that
$(M_t(y))_{y\in V_N}$
are compensated pure-jump martingales whose jumps have absolute value
$2/\sqrt\eta$, with predictable jump rates bounded by $1/\eta$ and no
simultaneous jumps at distinct sites. For $p\ge2$ and deterministic
Borel measurable $g:[0,T]\times V_N\to\mathbb R$, set
\[
 Y_t=\frac1n\sum_{y\in V_N}\int_0^t g_s(y)\,dM_s(y).
\]
If the integrals on the right below are finite, then
\begin{align}
 \mathbb E\sup_{t\le T}|Y_t|^p
 \le C_p\bigg[
 &\left(\frac1{n\eta^2}\int_0^T
                 \|g_s\|_{L^2(m_N)}^2\,ds\right)^{p/2}
 \notag\\
 &+\frac1{n^{p-1}\eta^{p/2+1}}\int_0^T
                 \|g_s\|_{L^p(m_N)}^p\,ds\bigg].
 \label{eq:auxiliary-martingale}
\end{align}
The constant depends only on $p$.
\end{lemma}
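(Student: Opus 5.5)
The estimate is a Burkholder--Davis--Gundy (Rosenthal-type) inequality for purely discontinuous martingales, applied to $Y$. The plan is to compute the predictable bracket and the jump sizes of $Y$ from the structure of $M$, and then insert them into the standard inequality
\[
 \mathbb E\sup_{t\le T}|Y_t|^p\le C_p\Bigl(\mathbb E\langle Y\rangle_T^{p/2}+\mathbb E\sum_{s\le T}|\Delta Y_s|^p\Bigr),
\]
valid for locally square-integrable martingales with $p\ge2$. One derivation goes through BDG with the quadratic variation $[Y]_T$ in place of $\langle Y\rangle_T$, followed by the Lenglart/Dzhaparidze--van Zanten comparison $\mathbb E[Y]_T^{p/2}\lesssim\mathbb E\langle Y\rangle_T^{p/2}+\mathbb E\sum|\Delta Y|^p$. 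Since $g$ is deterministic and bounded in the relevant integrals, $Y$ is a genuine martingale, so no localization issue arises beyond a routine truncation of $g$ followed by monotone convergence.

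\emph{Step 1: predictable bracket.} Since there are no simultaneous jumps at distinct sites, the coordinate martingales are orthogonal: $\langle M(x),M(y)\rangle=0$ for $x\ne y$. Each jump of $M(y)$ has size $2/\sqrt\eta$ and occurs at predictable rate $\lambda_s(y)\le1/\eta$. Hence
\[
 d\langle M(y)\rangle_s=\frac4\eta\lambda_s(y)\,ds\le\frac4{\eta^2}\,ds,
\]
and therefore
\[
 \langle Y\rangle_T=\frac1{n^2}\sum_y\int_0^Tg_s(y)^2\,d\langle M(y)\rangle_s\le\frac4{n\eta^2}\int_0^T\|g_s\|_{L^2(m_N)}^2\,ds .
\]
This bound is deterministic, so $\mathbb E\langle Y\rangle_T^{p/2}$ is bounded by the first term of \eqref{eq:auxiliary-martingale}.

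\emph{Step 2: jump term.} Again because jumps at distinct sites never coincide, a jump of $Y$ at time $s$ comes from a single site $y$ and has size $|\Delta Y_s|=\frac{2}{n\sqrt\eta}|g_s(y)|$. The process $\sum_{s\le t}|\Delta Y_s|^p$ is increasing with predictable compensator
\[
 \sum_y\int_0^t\Bigl(\frac{2|g_s(y)|}{n\sqrt\eta}\Bigr)^p\lambda_s(y)\,ds .
\]
Taking expectations,
\[
 \mathbb E\sum_{s\le T}|\Delta Y_s|^p\le\frac{2^p}{n^p\eta^{p/2+1}}\int_0^T\sum_y|g_s(y)|^p\,ds=\frac{2^p}{n^{p-1}\eta^{p/2+1}}\int_0^T\|g_s\|_{L^p(m_N)}^p\,ds,
\]
which is the second term of \eqref{eq:auxiliary-martingale}.

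\emph{Main obstacle.} The only delicate point is invoking the correct BDG variant. The plain BDG inequality bounds $\mathbb E\sup_{t\le T}|Y_t|^p$ by $\mathbb E[Y]_T^{p/2}$, and the optional quadratic variation $[Y]_T=\sum_{s\le T}|\Delta Y_s|^2$ is not deterministically bounded. I would cite the Rosenthal-type inequality in its Dzhaparidze--van Zanten or Lenglart form. If a self-contained argument is preferred, I would write $[Y]=\langle Y\rangle+\widetilde N$, where $\widetilde N$ is a martingale whose jumps are $|\Delta Y|^2$, and iterate BDG on $\widetilde N$ with exponent $p/2$. This recursion terminates after finitely many dyadic steps in the exponent: once the exponent falls to $p/2^k\le2$, or at worst to $\le1$, the remaining sum of jumps is controlled by subadditivity, $(\sum a_i)^{q}\le\sum a_i^{q}$ for $q\le1$. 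At each stage, the compensators are handled exactly as in Steps 1--2. The intermediate terms have the form $\int_0^T\|g_s\|_{L^{2^k}}^{2^k}\,ds$ with $2<2^k<p$, and these interpolate between the two endpoint terms by Hölder and Young, so they are absorbed into the right-hand side with a constant depending only on $p$.
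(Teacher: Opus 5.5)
Your argument is correct, but it closes the estimate differently from the paper. Your Steps~1 and~2 coincide with the paper's computation of the deterministic bounds $B$ for $\langle Y\rangle_T$ and $J$ for the compensator of $\sum_{s\le T}|\Delta Y_s|^p$.

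\textbf{The paper's route.} It avoids BDG and Rosenthal-type inequalities altogether. It applies the jump chain rule to $|Y|^p$ with the elementary bound $|u+v|^p-|u|^p-p|u|^{p-2}uv\le C_p(|u|^{p-2}|v|^2+|v|^p)$ and compensates. The key input is that $\langle Y\rangle_T\le B$ holds \emph{deterministically}, which gives $\mathbb E|Y_T|^p\le C_pB\,\mathbb E(Y_T^*)^{p-2}+C_pJ$. Doob, H\"older and Young then absorb the factor $(Y_T^*)^{p-2}$. Absorption needs $\mathbb E(Y_T^*)^p<\infty$ a priori, so the paper stops $Y$ at a level $L$ and works with bounded $g$, removing both by Fatou and truncation.

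\textbf{Your route.} The Rosenthal-type inequality needs neither the deterministic bracket nor any absorption, hence no a priori integrability. The cost is reliance on BDG for c\`adl\`ag martingales, or, in your self-contained version, on the dyadic iteration. That iteration does close. At stage $k$, apply BDG with exponent $p/2^k>1$ to $\sum_{s\le\cdot}|\Delta Y_s|^{2^k}$ minus its compensator, and stop by subadditivity at the first $k$ with $p/2^k\le1$. The intermediate compensator terms are harmless. With $f=2|g|/(n\sqrt\eta)$ and $\nu=(n/\eta)\,ds\otimes m_N$, every term is a power of some $\|f\|_{L^q(\nu)}$. Log-convexity of $q\mapsto\|f\|_{L^q(\nu)}$ and Young then give $\|f\|_{L^q(\nu)}^p\le\|f\|_{L^2(\nu)}^p+\|f\|_{L^p(\nu)}^p$ for $2\le q\le p$.

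\textbf{Comparison.} The paper's proof is shorter and uses only Doob's inequality because it exploits the deterministic bracket; yours is more general.

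One small correction: with BDG you need no truncation of $g$ at all, since $Y$ is already a square-integrable martingale when $\int_0^T\|g_s\|_{L^2(m_N)}^2\,ds<\infty$. If you do truncate, the left-hand side passes to the limit by $L^2$ convergence and Fatou, not by monotone convergence.
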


\begin{proof}
The predictable quadratic variation of $Y$ and the compensator of
$\sum_{s\le T}|\Delta Y_s|^p$ are bounded, respectively, by
\[
 B=\frac4{n\eta^2}\int_0^T\|g_s\|_{L^2(m_N)}^2\,ds,
 \qquad
 J=\frac{2^p}{n^{p-1}\eta^{p/2+1}}
                         \int_0^T\|g_s\|_{L^p(m_N)}^p\,ds.
\]
Indeed, a jump at site $y$ contributes $2|g_s(y)|/(n\sqrt\eta)$
in absolute value. Only the rate bound and absence of simultaneous
jumps enter this computation.

First suppose $g$ is bounded, and stop $Y$ on first reaching absolute
value $L$. The stopped process is bounded by $L$ plus a bounded jump,
and its compensators retain the bounds $B,J$. Apply the jump chain
rule to $|Y|^p$, using
\[
 |u+v|^p-|u|^p-p|u|^{p-2}uv
 \le C_p\bigl(|u|^{p-2}|v|^2+|v|^p\bigr).
\]
The stochastic integral has mean zero. Compensation therefore gives,
with $Y_T^*=\sup_{t\le T}|Y_t|$ for the stopped process,
\[
 \mathbb E|Y_T|^p\le C_pB\,\mathbb E(Y_T^*)^{p-2}+C_pJ.
\]
Doob's inequality and H\"older's inequality imply
\[
 \mathbb E(Y_T^*)^p
 \le C_pB\bigl[\mathbb E(Y_T^*)^p\bigr]^{(p-2)/p}+C_pJ.
\]
For $p>2$, Young's inequality absorbs half of the left side; for $p=2$
the conclusion is immediate. Thus
$\mathbb E(Y_T^*)^p\le C_p(B^{p/2}+J)$, uniformly in $L$.
Fatou's lemma removes the stopping. For general $g$, truncate its
values and apply the same estimate to differences of truncations.
The resulting stochastic integrals converge in the $L^p$ maximal
norm, and their $L^2$ limit is the integral defining $Y$. This proves
\eqref{eq:auxiliary-martingale}.
\end{proof}

\begin{lemma}\label{lem:auxiliary-chaining}
Let $(E,\rho)$ be a finite metric space of diameter at most one.
Suppose that, for every $0<\varepsilon\le1$, it can be covered by at
most $C_0\varepsilon^{-d}$ balls of radius $\varepsilon$ with centres
in $E$, where $C_0,d>0$. Let $Y:E\to\mathbb R$ be a random function satisfying
\[
 \mathbb E|Y(z)-Y(w)|^p\le A\rho(z,w)^p,
 \qquad z,w\in E,
\]
where $p\ge1$ and $A\ge0$. If $0<\kappa<1$ and $p(1-\kappa)>2d$, then
\begin{equation}\label{eq:auxiliary-chaining}
 \mathbb E\sup_{z\ne w}
              \frac{|Y(z)-Y(w)|^p}{\rho(z,w)^{\kappa p}}
 \le C A,
\end{equation}
where $C$ depends only on $p,\kappa,d,C_0$, not on $E$.
The supremum is taken to be zero when $E$ has one point.
\end{lemma}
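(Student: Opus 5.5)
This is a Garsia--Rodemich--Rumsey/Kolmogorov-type chaining bound on a finite metric space. I will run the standard dyadic chaining argument, keeping every constant independent of $|E|$.

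\textbf{Nets.} For $k\ge0$, let $\varepsilon_k=2^{-k}$ and choose a net $E_k\subset E$ of at most $C_02^{kd}$ centres whose $\varepsilon_k$-balls cover $E$. Since $E$ is finite, $E_k=E$ once $\varepsilon_k$ is below the minimal positive distance. For each $z\in E$ and each $k$, choose $\pi_k(z)\in E_k$ with $\rho(z,\pi_k(z))\le\varepsilon_k$, taking $\pi_k(z)=z$ for all large $k$. Then
\[
\rho(\pi_k(z),\pi_{k+1}(z))\le 2\varepsilon_{k+1}\cdot 3/2\le 3\varepsilon_{k+1}.
\]
The set of links $\Lambda_k=\{(\pi_k(z),\pi_{k+1}(z))\}$ has at most $|E_{k+1}|\le C_02^{(k+1)d}$ elements, because the pair is determined once we choose $\pi_{k}$ of the level-$(k+1)$ centre. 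If needed, I will redefine $\pi_k(z):=\pi_k(\pi_{k+1}(z))$ recursively from the finest level downward; the triangle inequality then only loses a constant factor.

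\textbf{Link variables.} Set
\[
\Xi_k=\max_{(a,b)\in\Lambda_k}|Y(a)-Y(b)|.
\]
Bounding the maximum by the sum and using the moment hypothesis gives
\[
\mathbb E\,\Xi_k^p\le C_02^{(k+1)d}A(3\varepsilon_{k+1})^p .
\]
Define
\[
S=\sum_{k\ge0}2^{k\kappa}\Xi_k .
\]
By Minkowski's inequality in $L^p$,
\[
\|S\|_p\le\sum_k 2^{k\kappa}\bigl(CA\,2^{kd}2^{-kp}\bigr)^{1/p}
= C A^{1/p}\sum_k 2^{-k(1-\kappa-d/p)}.
\]
The series converges because $p(1-\kappa)>d$, which is implied by the assumed $p(1-\kappa)>2d$. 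Hence $\mathbb E S^p\le CA$.

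\textbf{Deterministic step.} Given $z\ne w$, pick $k_0\ge0$ with $\varepsilon_{k_0+1}<\rho(z,w)\le\varepsilon_{k_0}$; this is possible since the diameter is at most one. Telescope along the two chains from level $k_0$ up to the level where $\pi_k=\mathrm{id}$:
\[
|Y(z)-Y(w)|\le 2\sum_{k\ge k_0}\Xi_k+|Y(\pi_{k_0}(z))-Y(\pi_{k_0}(w))|.
\]
The tail sum is at most $2\varepsilon_{k_0}^{\kappa}S\le C\rho(z,w)^\kappa S$, since $2^{-k\kappa}\le2^{-k_0\kappa}$ for $k\ge k_0$.

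\textbf{Main obstacle: the top pair.} The points $\pi_{k_0}(z),\pi_{k_0}(w)$ lie within distance $3\varepsilon_{k_0}$ of each other but need not form a link. To handle this, I will add to $S$ the terms $2^{k\kappa}\Xi'_k$, where
\[
\Xi'_k=\max\bigl\{|Y(a)-Y(b)|:a,b\in E_k,\ \rho(a,b)\le3\varepsilon_k\bigr\}.
\]
There are at most $(C_02^{kd})^2$ such pairs, which gives
\[
\mathbb E\,\Xi_k'^p\le C A\,2^{2kd}2^{-kp}.
\]
Summability now requires exactly $p(1-\kappa)>2d$, which is presumably why the statement is phrased with $2d$. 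With these terms included, $|Y(z)-Y(w)|\le C\rho(z,w)^\kappa S$ for all $z\ne w$. Raising to the power $p$ and taking the supremum gives \eqref{eq:auxiliary-chaining}, with $C$ depending only on $p,\kappa,d,C_0$.
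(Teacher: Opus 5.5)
Your proposal is correct and follows essentially the same route as the paper: dyadic nets, links both between consecutive levels and within a level at distance at most $3\cdot 2^{-k}$, a count of $C2^{2kd}$ links per level (which is exactly where $p(1-\kappa)>2d$ enters), and telescoping from the level $k$ with $2^{-k-1}<\rho(z,w)\le 2^{-k}$. The paper sums $p$th powers, $\Lambda^p=\sum_k 2^{k\kappa p}\sum_{\mathcal L_k}|Y(z)-Y(w)|^p$, where you use maxima and Minkowski's inequality; the two are equivalent.

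One small point: your claim that consecutive-level links number at most $|E_{k+1}|$ only holds under your recursive redefinition of $\pi_k$. That redefinition weakens $\rho(z,\pi_k(z))\le 2^{-k}$ to $2^{1-k}$, which raises the top-pair distance bound to $5\cdot 2^{-k_0}$, beyond your threshold $3\varepsilon_{k_0}$. Since the $2d$ count is needed anyway, drop the redefinition and bound the consecutive-level links crudely by $|E_k|\,|E_{k+1}|$, as the paper does.
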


\begin{proof}
Choose $2^{-k}$-nets $A_k\subset E$ with $|A_k|\le C_0 2^{kd}$,
taking $A_0$ to be a singleton, and choose $\pi_k(z)\in A_k$ with
$\rho(z,\pi_k(z))\le2^{-k}$. Since $E$ is finite, $A_k=E$ and
$\pi_k(z)=z$ for all sufficiently large $k$.

Let $\mathcal L_k$ consist of the pairs in $A_k\times A_{k+1}$ and
$A_k\times A_k$ at distance at most $3\cdot2^{-k}$. Then
$|\mathcal L_k|\le C2^{2kd}$. Define
\[
 \Lambda^p=\sum_{k\ge0}2^{k\kappa p}
                \sum_{(z,w)\in\mathcal L_k}|Y(z)-Y(w)|^p.
\]
Only finitely many summands are nonzero, and the increment bound gives
\[
 \mathbb E\Lambda^p
 \le C A\sum_{k\ge0}2^{-k[p(1-\kappa)-2d]}
 \le C A.
\]
Each retained link at level $k$ has increment at most
$\Lambda2^{-k\kappa}$. For $z\ne w$, choose $k\ge0$ with
$2^{-k-1}<\rho(z,w)\le2^{-k}$. The pair
$(\pi_k(z),\pi_k(w))$ belongs to $\mathcal L_k$, and
$(\pi_j(z),\pi_{j+1}(z))\in\mathcal L_j$ for every $j\ge k$,
as do the corresponding pairs for $w$. Telescoping gives
\[
 |Y(z)-Y(w)|
 \le\Lambda2^{-k\kappa}+2\Lambda\sum_{j\ge k}2^{-j\kappa}
 \le C_\kappa\Lambda\rho(z,w)^\kappa.
\]
Taking the supremum and then the $p$th moment proves
\eqref{eq:auxiliary-chaining}. 
\end{proof}

\begin{proposition}\label{prop:noise-convergence}
For every $T>0$ and $R>\|\Phi_0\|_\infty$,
\[
 Z^{N,R}\Longrightarrow Z\qquad\text{in }D([0,T];C(K)),
\]
where $Z$ is defined in \eqref{eq:limiting-stochastic-convolution}.
\end{proposition}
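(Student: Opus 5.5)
The proof splits into two parts: tightness of $(Z^{N,R})_N$ in $D([0,T];C(K))$ with continuous limit points, and identification of every limit point as $Z$ through its finite-dimensional Gaussian law.

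\emph{Tightness.} We first bound moments on the grid. For fixed $t\le T$ and $x\in V_N$,
\[
Z^{N,R}_t(x)=\frac1n\sum_y\int_0^t q^N_{t-s}(x,y)\,dM^{N,R}_s(y),
\]
which has the form treated in Lemma~\ref{lem:auxiliary-martingale}. The continued martingales satisfy its hypotheses: the jumps are $2/\sqrt\eta$, the rates are at most $1/\eta$, and there are no simultaneous jumps. For spatial increments we take $g_s=q^N_{t-s}(x,\cdot)-q^N_{t-s}(y,\cdot)$. The first term of \eqref{eq:auxiliary-martingale} is then bounded by $(n\eta^2)^{-p/2}(C_TR(x,y)^{\alpha/2})^{p/2}$ using \eqref{eq:noise-kernel-space}, and $n\eta^2\to\theta$. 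The $L^p$ term carries the prefactor $n^{-(p-1)}\eta^{-p/2-1}$. We bound $\|g\|_p^p\le\|g\|_\infty^{p-2}\|g\|_2^2$ and use the pointwise bound \eqref{eq:noise-kernel-size}, $\|q\|_\infty\le C\eta^{-\alpha}$. Since $n\asymp\eta^{-2}$, this term is $O(\eta^{c p})$ for some $c>0$ when $\alpha<1$. We then absorb it into $R(x,y)^{\gamma p}$ using the minimal resistance $R\ge c(3/5)^N\asymp\eta^{2(1-\alpha)/\alpha}$ on distinct vertices. Temporal increments $Z_{t+h}-Z_t$ split into $\int_0^t(q_{t+h-s}-q_{t-s})\,dM$ and $\int_t^{t+h}q_{t+h-s}\,dM$. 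These are not martingales in $t$, but for fixed $t,h$ each is an integral of the type in Lemma~\ref{lem:auxiliary-martingale}, and \eqref{eq:noise-kernel-time} gives $h^{(1-\alpha)p/2}$ plus a small jump term. With $p$ large, Lemma~\ref{lem:auxiliary-chaining} on the space--time grid ($V_N$ times times spaced $\eta^{K}$, metric $R^{\alpha/4}+|t-s|^{(1-\alpha)/2}$ with covering exponent uniform in $N$ by Section~\ref{sec:gasket}) gives a uniform Hölder modulus on the grid. Harmonic extension preserves the spatial modulus on cells.

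\emph{Between grid times.} Write $Z^{N,R}_t=S^N_{t-t_k}Z^{N,R}_{t_k}+\int_{t_k}^t S^N_{t-s}K_N\,dM_s$ for $t\in[t_k,t_{k+1}]$. The first piece is close to $Z_{t_k}$ because $\|S^N_h-I\|$ is small on functions with a spatial modulus; we check this via a kernel/heat-kernel argument, or else fall back on a cruder Lipschitz bound over the short interval $\eta^K$. For the second piece, the drift of $S$ contributes $\eta^{K-1}\times$ the sup, which is negligible. Its jumps have size at most $\|q\|_\infty/(n\sqrt\eta)\le\eta^{3/2-\alpha}\to0$. The oscillation over a short interval is controlled by a maximal version of Lemma~\ref{lem:auxiliary-martingale}, after freezing the kernel at $t_{k+1}$ and correcting with the time increment estimate. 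A union bound over the polynomially many grid points, with $p$ large, closes this step. The result is $C$-tightness.

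\emph{Identification.} We take $\varphi\in D_C(L)$ and use duality:
\[
\langle Z^{N,R}_t,\varphi\rangle_{m_N}=\int_0^t\langle K_NS^N_{t-s}\varphi,dM_s^{N,R}\rangle_{m_N}.
\]
By \eqref{eq:microscopic-semigroup-convergence}, the integrand converges uniformly to $P_{t-s}\varphi$. The predictable bracket of $\langle M^{N,R},\psi\rangle$ converges to $(2/\theta)\|\psi\|_m^2\,t$ by \eqref{eq:continued-rate-error} and $m_N\Rightarrow m$, and the jumps vanish. The martingale central limit theorem, applied to the vector of martingales $s\mapsto\int_0^{s}\langle K_NS^N_{t_i-r}\varphi_i,dM_r\rangle$ with $s\le t_i$, then yields Gaussian limits with covariances $(2/\theta)\int_0^{t_i\wedge t_j}\langle P_{t_i-r}\varphi_i,P_{t_j-r}\varphi_j\rangle\,dr$. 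These match the covariances of $Z$. Together with $C$-tightness and the fact that such functionals separate laws on $C([0,T];C(K))$, this proves convergence. The main obstacle is the between-grid oscillation together with the $L^p$ jump terms, since the $\eta$-power bookkeeping against the minimal resistance must come out favourably.
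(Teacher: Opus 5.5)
Your proposal is correct and follows essentially the same route as the paper. It uses moment bounds from Lemma~\ref{lem:auxiliary-martingale} combined with Proposition~\ref{prop:microscopic-kernel}, with the jump term absorbed through the minimal-resistance bound. It then applies chaining on a space--time grid whose mesh is a power of $\eta$ (the paper takes $h_N\asymp\eta^2$), controls oscillations between grid times by absorbing the $(K_N-I)/\eta$ drift together with a union bound, and identifies the limit through a Gaussian limit of the tested martingales.

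The differences are cosmetic. The paper treats the between-grid oscillation through the global identity $Z^{N,R}_t=Q^N_t+\int_0^t\eta^{-1}(K_N-I)Z^{N,R}_s\,ds$ with $Q^N=K_NM^{N,R}$, where you use a local semigroup/frozen-kernel decomposition. The paper also proves the Gaussian limit by a direct characteristic-function and Gronwall argument, where you invoke the martingale CLT.
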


\begin{proof}
\emph{Tightness.} We first control increments on a time grid, then
show that the oscillations between grid times vanish.
For the kernels and kernel differences below,
$\|g_s\|_\infty\le C_T\eta^{-\alpha}$. Writing
$V=\int_0^T\|g_s\|_2^2\,ds$, Lemma~\ref{lem:auxiliary-martingale} gives
\[
 \mathbb E\left|\frac1n\sum_y\int_0^T g_s(y)\,dM_s^{N,R}(y)\right|^p
 \le C_{p,T}\left[V^{p/2}
       +\eta^{(3/2-\alpha)(p-2)}V\right],\qquad p\ge2,
\]
using $n\asymp\eta^{-2}$ and
$\|g_s\|_p^p\le\|g_s\|_\infty^{p-2}\|g_s\|_2^2$.
For a temporal increment from $s$ to $t$, split the integral at $s$:
the earlier noise carries the kernel difference
$q_{t-u}^N-q_{s-u}^N$, and the later noise carries $q_{t-u}^N$.
For $x,y\in V_N$ and $0\le s<t\le T$,
Proposition~\ref{prop:microscopic-kernel} yields
\begin{align*}
 \mathbb E|Z_t^{N,R}(x)-Z_t^{N,R}(y)|^p
 &\le C_{p,T}R(x,y)^{\alpha p/4},\\
 \mathbb E|Z_t^{N,R}(x)-Z_s^{N,R}(x)|^p
 &\le C_{p,T}\left[|t-s|^{(1-\alpha)p/2}
       +\eta^{(3/2-\alpha)(p-2)}|t-s|^{1-\alpha}\right].
\end{align*}
The spatial jump term is absorbed using
$R(x,y)^{\alpha/4}\ge c\eta^{(1-\alpha)/2}$ for distinct vertices.

Choose $J_N=\lceil T/\eta^2\rceil$ and $h_N=T/J_N\asymp\eta^2$.
This mesh absorbs the temporal jump term and allows the drift
between grid times to be absorbed: $h_N/\eta=O(\eta)$ and
$\eta^{3/2-\alpha}/h_N^{(1-\alpha)/2}=O(\eta^{1/2})$.
Thus, on $\{ih_N:0\le i\le J_N\}\times V_N$,
\[
 \mathbb E|Z_t^{N,R}(x)-Z_s^{N,R}(y)|^p
 \le C_{p,T}\left(|t-s|^{(1-\alpha)/2}
                         +R(x,y)^{\alpha/4}\right)^p.
\]
After rescaling to diameter one, the metric in parentheses has
covering numbers at most $C_T\varepsilon^{-6/(1-\alpha)}$, since
$2/(1-\alpha)+4/(1-\alpha)=6/(1-\alpha)$ by Section~\ref{sec:gasket}.
Take $p=64$. Since
$p(1-\tfrac14)>12/(1-\alpha)$, Lemma~\ref{lem:auxiliary-chaining}
with $\kappa=1/4$ gives random variables $\Lambda_N$ with
$\sup_N\mathbb E\Lambda_N^p<\infty$ such that, on the grid,
\[
 |Z_t^{N,R}(x)-Z_s^{N,R}(y)|
 \le\Lambda_N\left(|t-s|^{(1-\alpha)/8}
                              +R(x,y)^{\alpha/16}\right).
\]
Since $Z_0^{N,R}=0$, the grid supremum is at most $C_T\Lambda_N$.

To control the intervening times, put $Q^N=K_NM^{N,R}$. Then
\[
 Z_t^{N,R}=Q_t^N+\int_0^t\frac{K_N-I}{\eta}Z_s^{N,R}\,ds.
\]
Let $\varepsilon_N$ be the maximum of
$\sup_{a\le u\le a+h_N}|Q_u^N(x)-Q_a^N(x)|$ over all vertices and
grid intervals. Applying Lemma~\ref{lem:auxiliary-martingale} with
$q_0^N$, for which $V\le Ch_N\eta^{-\alpha}$, and summing over the
$nJ_N=O(\eta^{-4})$ site-interval pairs gives
\[
 \mathbb E\varepsilon_N^p
 \le C_{p,T}\left[
   \eta^{(2-\alpha)p/2-4}
   +\eta^{(3/2-\alpha)(p-2)-2-\alpha}\right]\longrightarrow0.
\]
Both exponents are positive for our choice of $p$.
Since $\|(K_N-I)/\eta\|_{\infty\to\infty}\le2/\eta$, the largest
oscillation $\omega_N$ of $Z^{N,R}$ from a left grid endpoint satisfies
\[
 \omega_N\le\varepsilon_N+\frac{2h_N}{\eta}
                         (C_T\Lambda_N+\omega_N).
\]
Absorbing the last term, using $h_N/\eta=O(\eta)$, gives
$\omega_N\le2\varepsilon_N+C_T\eta\Lambda_N\to0$ in $L^p$.

Extend the grid values harmonically in space and linearly in time
to obtain a continuous field $\overline Z^{N,R}$. Contractivity of
harmonic extension gives
\[
 \sup_{t\le T}\|\overline Z_t^{N,R}-Z_t^{N,R}\|_\infty
 \le2\omega_N\xrightarrow{\mathbb P}0.
\]
Linear interpolation preserves the temporal modulus
$C_T\Lambda_N|t-s|^{(1-\alpha)/8}$.
For space, harmonic extension takes convex combinations of cell
boundary values, and boundary vertices of cells containing $x,y$
have resistance distance at most $R(x,y)+C(3/5)^N$. Hence
\[
 |\overline Z_t^{N,R}(x)-\overline Z_t^{N,R}(y)|
 \le C_T\Lambda_N\bigl(R(x,y)+C(3/5)^N\bigr)^{\alpha/16}.
\]
The moment bound on $\Lambda_N$ and $(3/5)^N\to0$ give tight suprema and
asymptotic equicontinuity; continuity for each fixed $N$ handles the
remaining finitely many indices. Arzel\`a--Ascoli gives tightness of
$\overline Z^{N,R}$ in $C([0,T];C(K))$. The uniform approximation above
then gives tightness of $Z^{N,R}$ in $D([0,T];C(K))$, with continuous
limits.

\emph{Identification.} Fix
$0\le t_1,\ldots,t_k\le T$, $\varphi_1,\ldots,\varphi_k\in C(K)$ and
$a_1,\ldots,a_k\in\mathbb R$. By symmetry of $S_t^NK_N$,
\[
 \sum_{i=1}^ka_i\langle Z_{t_i}^{N,R},\varphi_i\rangle_{m_N}
 =Y_N(T),\qquad
 Y_N(t)=\frac1n\sum_y\int_0^t g_N(s,y)\,dM_s^{N,R}(y),
\]
where
\[
 \begin{aligned}
 g_N(s)&=\sum_{i:s\le t_i}a_i
                  S_{t_i-s}^NK_N(\varphi_i|_{V_N}),\\
 g(s)&=\sum_{i:s\le t_i}a_iP_{t_i-s}\varphi_i.
 \end{aligned}
\]
The integrands are uniformly bounded, and
\eqref{eq:microscopic-semigroup-convergence} gives
$\sup_{s\le T}\|g_N(s)-(g(s)|_{V_N})\|_\infty\to0$.
Moreover, the functions $g(s)^2$ lie in a finite union of compact
subsets of $C(K)$. Finite uniform-norm nets therefore make
$m_N\Rightarrow m$ uniform over this family. Together with
\eqref{eq:continued-rate-error} and $n\eta^2\to\theta$, this shows
that the bracket density from \eqref{eq:continued-bracket},
\[
 v_N(s)=\frac4{n\eta^2}
              \langle g_N(s)^2,c_s^{N,R}\rangle_{m_N}
\]
converges to $v(s)=(2/\theta)\|g(s)\|_{L^2(m)}^2$, essentially
uniformly in $s$ with a deterministic error tending to zero. Also
$\sup_s|\Delta Y_N(s)|\le C/(n\sqrt\eta)\to0$, and
$\int_0^T v_N(s)\,ds\le C_T$.

These facts imply a Gaussian limit directly. For $\lambda\in\mathbb R$,
set $f_N(t)=\mathbb E e^{i\lambda Y_N(t)}$. The jump chain rule and
compensation give
\[
 f_N(t)=1-\frac{\lambda^2}{2}\int_0^t
       \mathbb E[e^{i\lambda Y_N(s-)}v_N(s)]\,ds+o(1),
\]
uniformly for $t\le T$. Indeed, the linear term has mean zero, and the
expected absolute Taylor remainder is bounded uniformly in $t\le T$ by
$C_{\lambda,T}/(n\sqrt\eta)\to0$.
Replacing $v_N$ by $v$ yields
\[
 f_N(t)=1-\frac{\lambda^2}{2}\int_0^t v(s)f_N(s)\,ds+o(1).
\]
Gronwall's inequality gives
$f_N(T)\to\exp[-(\lambda^2/2)\int_0^T v(s)\,ds]$.
Consequently the tested values converge jointly to centred Gaussian
variables with the covariance of
$\langle Z_{t_i},\varphi_i\rangle_m$.

Tightness of the fixed-time marginals and uniform convergence of the
$m_N$-integrals on compact subsets of $C(K)$ show that replacing $m_N$
by $m$ in each tested value changes it by a term tending to zero in
probability. The continuous mapping theorem passes these tests to any
continuous subsequential limit. Since $m$ has full support, tests against a countable dense subset
of $C(K)$ at rational times determine a continuous path law. Every
subsequential limit therefore has the law of $Z$.
\end{proof}

\subsection{Finishing the Proof}

\begin{proof}[Proof of Theorem~\ref{thm:stochastic-limit}]
Fix $T>0$. By \eqref{eq:diagonal-deletion-bound},
\[
 \max_{x\in V_N}\left|H_0^N(x)
   -\mathcal X_0^{N,\eta}(p_\eta^N(x,\cdot))\right|
 \le\eta^{-1/2}\|D_N\|_{\infty\to\infty}
 \le C\eta^{3/2-\alpha}\longrightarrow0.
\]
Harmonic extension preserves this bound, so the initial assumption
gives $H_0^N\Longrightarrow\Phi_0$ in $C(K)$.

\emph{Convergence at a fixed truncation level.}
Fix $R>\|\Phi_0\|_\infty$. We first prove
\begin{equation}\label{eq:truncated-field-convergence}
 H^{N,R}\Longrightarrow\Phi^R
 \quad\text{in }D([0,T];C(K)).
\end{equation}
Since $\Phi_0$ is deterministic, the prescribed initial convergence
and Proposition~\ref{prop:noise-convergence} give joint convergence of
$(H_0^N,Z^{N,R})$ to $(\Phi_0,Z)$. By the Skorokhod representation
theorem \cite{billingsley1999convergence}, realize these pairs so that
the initial fields converge uniformly and the noises converge uniformly
on $[0,T]\times K$, almost surely; convergence in $J_1$ to a continuous
path implies uniform convergence. Construct the solutions of
\eqref{eq:truncated-microscopic-equation} from these inputs, preserving
their laws.

For each outcome, compare the two equations at the vertices. The
initial terms converge uniformly on $[0,T]$, by contraction and
\eqref{eq:microscopic-semigroup-convergence}. The same consistency
estimate, uniformly on the compact family
$\{F_R(\Phi_s^R):0\le s\le T\}\subset C(K)$, gives
\[
 \sup_{r,s\le T}
 \left\|S_r^NK_N(F_R(\Phi_s^R)|_{V_N})
       -(P_rF_R(\Phi_s^R))|_{V_N}\right\|_{\infty,V_N}
 \longrightarrow0.
\]
Using \eqref{eq:truncated-drift-convergence} and contractivity, we obtain
\[
 e_N(t)\le o(1)+L_R\int_0^t e_N(s)\,ds,
 \qquad
 e_N(t)=\sup_{u\le t}\max_{x\in V_N}
             |H_u^{N,R}(x)-\Phi_u^R(x)|,
\]
where the error is uniform on $[0,T]$ and $L_R$ is independent of $N$.
Gronwall's inequality gives $e_N(T)\to0$. Harmonic extension is a
contraction, and the extensions of restrictions converge uniformly
on the compact family $\{\Phi_t^R:t\le T\}$. Hence
$\sup_{t\le T}\|H_t^{N,R}-\Phi_t^R\|_\infty\to0$ almost surely,
proving \eqref{eq:truncated-field-convergence}.

\emph{Removal of the truncation.}
Couple $\Phi^R$ and the global solution $\Phi$ using the same $Z$.
Pathwise uniqueness gives agreement until either reaches norm $R$;
by continuity, their events of reaching $R$ by time $T$ coincide.
At finite $N$, $H^N$ has finitely many jumps on $[0,T]$, so a
finite exit is attained. Agreement through the exit therefore gives
\[
 \{\tau_R^N\le T\}
 \subseteq\left\{\sup_{t\le T}\|H_t^{N,R}\|_\infty\ge R\right\}.
\]
The supremum norm over $[0,T]$ is continuous for $J_1$, since time
changes preserve the supremum. Thus Portmanteau and
\eqref{eq:truncated-field-convergence} yield
\begin{equation}\label{eq:cutoff-exit-bound}
 \limsup_{N\to\infty}\mathbb P(\tau_R^N\le T)
 \le\mathbb P\!\left(\sup_{t\le T}\|\Phi_t\|_\infty\ge R\right)
 \longrightarrow0\qquad(R\to\infty).
\end{equation}
The last limit follows from global continuity, proved in
Proposition~\ref{thm:phi4 wp}.

To combine these comparisons, let $G$ be a bounded continuous functional
on $D([0,T];C(K))$ with $\|G\|_\infty\le1$. Agreement outside the
respective exit events gives
\[
 \begin{aligned}
 \left|\mathbb EG(H^N)-\mathbb EG(\Phi)\right|
 &\le 2\mathbb P(\tau_R^N\le T)\\
 &\quad+\left|\mathbb EG(H^{N,R})-\mathbb EG(\Phi^R)\right|\\
 &\quad+2\mathbb P\!\left(\sup_{t\le T}\|\Phi_t\|_\infty\ge R\right).
 \end{aligned}
\]
The middle term tends to zero for fixed $R$ by
\eqref{eq:truncated-field-convergence}. Using \eqref{eq:cutoff-exit-bound}
for the first term, we obtain
\[
 \limsup_{N\to\infty}\left|\mathbb EG(H^N)-\mathbb EG(\Phi)\right|
 \le4\mathbb P\!\left(\sup_{t\le T}\|\Phi_t\|_\infty\ge R\right)
 \longrightarrow0\qquad(R\to\infty).
\]
This proves $H^N\Longrightarrow\Phi$ on $[0,T]$. Applying this
at every integer $T$, where restriction is continuous at the limiting
paths, proves convergence in the local $J_1$ topology on
$D(\mathbb R_+,C(K))$.

\emph{Empirical observables.}
Fix $\varphi\in D_C(L)$,
with restriction to $V_N$ understood when applying discrete operators.
Symmetry of $K_N$ gives
\[
 \mathcal X_t^{N,\eta_N}(\varphi)-\langle H_t^N,\varphi\rangle_{m_N}
 =\eta^{-1/2}\langle\sigma^N_{t/\eta},(I-K_N)\varphi\rangle_{m_N}.
\]
Since $I-P_\eta^N=-\int_0^\eta P_s^NL_N\,ds$, contractivity and
\eqref{eq:gasket-generator-bound} give
$\|(I-P_\eta^N)\varphi\|_\infty\le\eta\|L\varphi\|_\infty$.
Together with \eqref{eq:diagonal-deletion-bound}, this yields the
deterministic estimate
\begin{equation}\label{eq:empirical-field-comparison}
 \begin{aligned}
 \sup_{t\le T}\left|\mathcal X_t^{N,\eta_N}(\varphi)
                 -\langle H_t^N,\varphi\rangle_{m_N}\right|
 &\le\eta^{-1/2}\|(I-K_N)\varphi\|_\infty\\
 &\le\sqrt\eta\,\|L\varphi\|_\infty
       +C\eta^{3/2-\alpha}\|\varphi\|_\infty
 \longrightarrow0.
 \end{aligned}
\end{equation}

The local-field convergence proved above admits a Skorokhod
representation on which $H^N\to\Phi$ uniformly on $[0,T]\times K$
almost surely. Moreover,
$m_N\Rightarrow m$ uniformly over the compact family
$\{\Phi_t\varphi:0\le t\le T\}\subset C(K)$. Consequently,
\[
 \sup_{t\le T}\left|\langle H_t^N,\varphi\rangle_{m_N}
                   -\int_K\Phi_t\varphi\,dm\right|\longrightarrow0
 \qquad\text{almost surely}.
\]
This argument applies jointly to any finite collection of tests.
The comparison \eqref{eq:empirical-field-comparison} transfers the
resulting path convergence to the empirical observables. Since $T$
is arbitrary and the limit is continuous, the convergence holds in
$D(\mathbb R_+,\mathbb R^k)$, as claimed.
\end{proof}

\bibliographystyle{amsplain}
\bibliography{ref}

\end{document}